\newcommand{\mf}{\mathfrak}
\newcommand{\mc}{\mathcal}
\newcommand{\CC}{\mathbb{C}}
\newcommand{\NN}{\mathbb{N}}
\newcommand{\fsl}{\mathfrak{sl}}
\newcounter{r}
\newcounter{s}
\newcommand\sTableau[1]{
        \foreach \x [count = \c from 1] in {#1} {
		\foreach \y [count = \d from 1] in \x{
			\node at (\d-.5,\c-.5) {\tiny$\y$}; 
			\draw (\d,\c) to (\d,\c-1);
			{\ifnum\d=1
				\draw (0,\c) to (0,\c-1);
				\fi}
			\setcounter{r}{\d}
		}
		{\ifnum\c=1
			\draw (0,0)--(\value{r},0);
			\fi}
		\draw(0,\c) to (\value{r},\c);
		\setcounter{s}{\c}}}
\newcommand{\sTBL}[1]{
\begin{matrix}
\begin{tikzpicture}[scale=.25, yscale=-1] 
	\sTableau{#1}\node at (.5,0){};
\end{tikzpicture}
\end{matrix}
}
\numberwithin{equation}{section}
\newtheorem{theorem}{Theorem}[section]
\newtheorem{corollary}[theorem]{Corollary}
\newtheorem{conjecture}[theorem]{Conjecture}
\newtheorem{proposition}[theorem]{Proposition}
\newtheorem{lemma}[theorem]{Lemma}
\newtheorem*{remark}{Remark}
\title{Schur--Weyl duality for diagonalizing a Markov chain on the hypercube}
\author{
\begin{tabular}{c} Persi Diaconis\\ \small Department of Statistics \\ [-3pt] \small Department of Mathematics \\ [-3pt]\small Stanford University\end{tabular}
\begin{tabular}{c} Andrew Lin\\ \small Department of Mathematics \\ [-3pt]\small Stanford University\end{tabular}
\begin{tabular}{c} Arun Ram\\ \small Department of Mathematics \\ [-3pt]\small University of Melbourne\end{tabular}
}
\date{}
\begin{document} 

\maketitle

\begin{abstract}
We show how the tools of modern algebraic combinatorics -- representation theory, Murphy elements, and particularly Schur--Weyl duality -- can be used to give an explicit orthonormal basis of eigenfunctions for a ``curiously slowly mixing Markov chain'' on the space of binary $n$-tuples. The basis is used to give sharp rates of convergence to stationarity.
\end{abstract}

\tableofcontents

\section{Introduction}

In \cite{diaconislinram1}, we studied a ``curiously slowly mixing Markov chain'' on $C_2^n = \{(x_1, \ldots, x_n): x_i \in \{0, 1\}\}$. Started at the all-zeros state, this chain mixes in a \textit{bounded} number of steps, no matter how large $n$ is, in both the usual $\ell^1$ and $\ell^2$ distances. From more general starts, order $\log n$ steps always suffice for $\ell^1$, while order $\frac{n}{\log n}$ steps are required for $\ell^2$ from most starts. Our initial analysis was hampered by not being able to orthogonally diagonalize the chain. Recently, we succeeded in using tools of algebraic combinatorics to find an orthonormal basis of eigenvectors. This lets us do more refined analysis.

The Markov chain in question is the ``Burnside process,'' a broadly useful procedure for sampling a uniform orbit of a finite group acting on a finite set. This chain has resisted analysis, even in very special cases, and the present step forward seems like a breakthrough.

Let $\mf{X}$ be a finite set and $G$ a finite group acting on $\mf{X}$. The action splits $\mf{X}$ into disjoint orbits 
\[
    \mf{X} = \mc{O}_1 \cup \mc{O}_2 \cup \cdots \cup \mc{O}_Z.
\]
The \textit{Burnside process} gives a way of choosing a uniformly distributed orbit (in contrast to a uniformly distributed element of $\mf{X}$), which is a basic task for P\'olya's problems of enumeration under symmetry. It consists of two steps:
\begin{itemize}
    \item From $x \in \mf{X}$, choose $s$ uniformly from the set $G_x = \{s: x^s = x\}$.
    \item From $s \in G$, choose $y$ uniformly from the set $\mf{X}_s = \{y: y^s = y\}$.
\end{itemize}

The chance of moving from $x$ to $y$ in one step of the chain is 
\[
    K(x, y) = \frac{1}{|G_x|} \sum_{s \in G_x \cap G_y} \frac{1}{|\mf{X}_s|}.
\]
This is an ergodic, reversible Markov chain on $\mf{X}$ with stationary distribution
\[
    \pi(x) = \frac{1}{Z|\mc{O}_x|}, 
\]
where $Z$ is the total number of orbits and $\mc{O}_x$ is the orbit containing $x$. Thus, running the chain and simply recording the current orbit gives a Markov chain on orbits with a uniform stationary distribution. 

Examples on huge state spaces in various applications (see for instance \cite{polyatrees}, \cite{contingency}, \cite{burnsideimportance}) empirically indicate extremely rapid mixing, but it appears difficult to prove this in any of these instances. Instead, a ``simplest example'' previously studied in the sequence of papers \cite{jerrum}, \cite{boseeinstein}, \cite{diaconiszhong}, \cite{diaconislinram1} is instructive. Let the set $\mf{X} = C_2^n$ be the binary $n$-tuples, and let the symmetric group $G = S_n$ act by permuting coordinates. Thus, the orbit decomposition is 
\[
    \mf{X} = \mc{O}_0 \cup \mc{O}_1 \cup \cdots \cup \mc{O}_n, \quad \text{where} \quad \mc{O}_i = \{x \in C_2^n: |x| = i\},
\]
and where $|x|$ denotes the number of ones in $x$. The two steps of the Burnside process can be explicitly described in this case:
\begin{itemize}
    \item From $x \in C_2^n$, choose $s \in S_n$ with $x^s = x$ uniformly. This is easy to do, since $G_x = S_i \times S_{n-i}$ for $i = |x|$, where we permute the zeros and the ones each among themselves.
    \item From $s \in S_n$, choose $y \in C_2^n$ with $y^s = y$. This too is easy to do: simply break $s$ into its disjoint cycles, color each one $0$ or $1$ with probability $\frac{1}{2}$, and install the values according to $s$.
\end{itemize}

The resulting Markov transition matrix $K(x, y)$ has a simple description, given in \cite[Lemma 3.1]{diaconislinram1}, and its stationary distribution is $\pi(x) = \frac{1}{(n+1) \binom{n}{|x|}}$. Its eigenvalues can be described as follows (\cite[Theorem 1.2]{diaconislinram1}): we have
\begin{align}\label{unlumpedeigenvalues}
&\beta_0 = 1, \nonumber \\
&\beta_k = \frac{\binom{2k}{k}^2}{2^{4k}} \text{ with multiplicity }\binom{n}{2k} \text{ for }1 \le k \le \left\lfloor \frac{n}{2}\right \rfloor, \\
&0 \text{ with multiplicity }2^{n-1}.\nonumber
\end{align}

To describe convergence of the Markov chain, define the $\ell^1$ or \textit{total variation} distance
\[
    ||K_x^\ell - \pi||_{\text{TV}} = \frac{1}{2} \sum_{y \in C_2^n} |K^\ell(x, y) - \pi(y)|,
\]  
and the $\ell^2$ or \textit{chi-square} distance
\[
    \chi_x^2(\ell) = \sum_y \frac{(K^\ell(x, y) - \pi(y))^2}{\pi(y)} = \left|\left|\frac{K^\ell}{\pi} - 1\right|\right|^2_2.
\]
The usual route to bounding convergence is to ``bound $\ell^1$ by $\ell^2$ and then bound $\ell^2$ using eigenvalues.'' More precisely,
\begin{equation}\label{l1vsl2bound}
4||K_x^\ell - \pi||_{\text{TV}}^2 \le \chi_x^2(\ell) = \sum_{\beta_i \ne 1} f_i^2(x) \beta_i^{2\ell},
\end{equation}
where the sum runs over all nontrivial eigenvalues of $K$ and $\{f_i(x)\}$ is an orthonormal basis of eigenvectors of eigenvalues $\beta_i$.

In \cite{diaconislinram1}, we found an elegant basis of eigenvectors (which is no simple task in problems with highly degenerate eigenspaces), but, alas, these eigenvectors were \textit{not} orthogonal and so we were forced to work with the average $\ell^2$ distance
\[
    \chi_{\text{avg}}^2(\ell) = \sum_x \pi(x) \chi_x^2(\ell) = \sum_{\beta_i \ne 1} \beta_i^{2\ell}.
\]
The explicit form of the eigenvalues and multiplicities from \cref{unlumpedeigenvalues} was enough to prove sharp cutoff for $\chi_{\text{avg}}^2(\ell)$ at $\ell = \frac{\log 2}{2} \frac{n}{\log n}$. This could then be refined to show (\cite[Theorem 1.1]{diaconislinram1}) that $\chi_x^2(\ell)$ tends to zero for all $x$ if $\ell \ge (1 + \varepsilon) \frac{\log 2}{2} \frac{n}{\log n}$, and also that $\chi_x^2(\ell)$ tends to $\infty$ for ``most $x$'' after order $\frac{n}{\log n}$ steps.

In contrast, previous analysis (\cite{boseeinstein}, \cite{diaconiszhong}) had shown that the chain started at the all-zeros state mixes in a \textit{bounded} number of steps in both $\ell^1$ and $\ell^2$. This leaves the natural question ``how does the mixing time get from bounded to order $\frac{n}{\log n}$?'' For example, what happens if the chain starts at the state $(0, \cdots, 0, 1)$, or similarly a state with $i$ ones with $i$ constant or slowly growing with $n$?

More detailed descriptions and extensive references to previous work may be found in the introduction to \cite{diaconislinram1}. Here, we now turn to a description of the present paper.

Set $\ell^2(\pi) = \{f: C_2^n \to \CC\}$ with the inner product $\langle f, g \rangle = \sum_x f(x) \overline{g(x)} \pi(x)$. The transition matrix $K(x, y)$ operates on $\ell^2$ as
\[
    Kf(x) = \sum_y K(x, y) f(y).
\]
By construction, $K$ is a self-adjoint operator which commutes with the action of $S_n$ on $\ell^2(\pi)$ (meaning that for all $x, y \in C_2^n$ and $s \in S_n$, we have $K(x, y) = K(x^s, y^s)$). So as a representation of $S_n$, $\ell^2(\pi)$ decomposes as a direct sum of irreducible representations of $S_n$. Schur's lemma implies that the operator $K$ sends isotypic pieces of this decomposition into themselves, and decomposing further into eigenspaces of $K$ refines this. (See \cite{repbook} -- particularly Section 7 for material on the symmetric group -- or \cite{jamessymmetric} for this classical approach.) One result of our development is a complete description of how the isotypic pieces decompose relative to the eigenvalue multiplicities (\cref{eigenvaluesplittingresult} below).

The Lie algebra $\mathfrak{sl}_2(\CC)$ also acts on $\ell^2(\pi)$ (explained in \cref{schurweylsubsection2}), commuting with the action of $S_n$. Schur--Weyl duality implies that $\mathfrak{sl}_2$ and $S_n$ are each the full centralizer of the other. One main accomplishment of the present paper is to refine these symmetry decompositions into a ``magical'' orthonormal basis of $\ell^2(\pi)$, which \textit{does not} depend on $K(x, y)$. Thus this basis is available for other Markov chains on $C_2^n$ that commute with the action of $S_n$, some of which are compiled in \cite[Section 6.1]{diaconislinram1}. We then subsequently use properties of the transition matrix to transform this basis into an orthonormal basis of eigenvectors of $K$, described in \cref{orthoevthm}. This use of Schur--Weyl duality is the content of \cref{schurweylsection} and \cref{schurweylproofsection}.

Throughout previous and current analysis, a collection of classical orthogonal polynomials play a central role. These are the Chebyshev and Hahn polynomials: the Chebyshev polynomials are the orthogonal polynomials for the uniform distribution on the integers $\{0, 1, \cdots, n\}$, and the Hahn polynomials are analogously for the ``beta-binomial'' distribution on that same space. \cref{schurweylsubsection2} explains their presence and uses classical propeties to give the explicit decompositions promised above. Classical formulas also help give explicit norming constants to simplify ``impossible'' hypergeometric sums.

Next, \cref{schurweylsubsection3} applies all of this machinery to give sharp rates of convergence in $\ell^2$ for the chain started at the state $e_n = (0, \cdots, 0, 1)$. The upshot is that a bounded number of steps are necessary and sufficient, just like for the all-zeros state:

\begin{theorem}\label{swdistancefrom1}
For the binary Burnside process started from the state $e_n = (0, 0, \cdots, 1)$ (or any other state with a single $1$), a constant number of steps is necessary and sufficient in $\ell^2$ (and therefore also in $\ell^1$). More precisely, for all $n, s \ge 3$, the chi-square distance to stationarity after $s$ steps satisfies
\[
    5\left(\frac{1}{4}\right)^{2s} \le \chi^2_{e_n}(s) \le 270 \left(\frac{1}{4}\right)^{2s}.
\]
\end{theorem}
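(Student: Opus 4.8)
The plan is to expand $\chi^2_{e_n}(s)$ in terms of the orthonormal eigenbasis provided by \cref{orthoevthm} and then bound the resulting sum above and below. Writing $\chi^2_{e_n}(s) = \sum_{\beta_i \ne 1} f_i^2(e_n)\,\beta_i^{2s}$ as in \cref{l1vsl2bound}, the key point is that each eigenvalue $\beta_k = \binom{2k}{k}^2/2^{4k}$ contributes the quantity $\beta_k^{2s}$ times the sum of $f_i^2(e_n)$ over all orthonormal eigenvectors $f_i$ of eigenvalue $\beta_k$. This latter sum is exactly the squared norm of the orthogonal projection of the point-mass-type function $\delta_{e_n}/\pi(e_n) - 1$ onto the $\beta_k$-eigenspace, so it can be computed intrinsically (not basis-by-basis) from the explicit description of the eigenspaces in terms of the Schur--Weyl / Chebyshev--Hahn decomposition.

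First I would identify, using \cref{eigenvaluesplittingresult} and \cref{orthoevthm}, which eigenspaces the function $\delta_{e_n}$ (viewed in $\ell^2(\pi)$) projects onto nontrivially. Since $e_n$ has $|e_n| = 1$, only a limited range of $S_n$-isotypic components and hence only a limited range of $k$ can appear; in fact I expect only $k = 1$ (eigenvalue $\beta_1 = (\binom{2}{1}^2/16) = 1/4$) to contribute at leading order, with higher $k$ contributing genuinely smaller terms. I would then compute the exact coefficient $c_1 := \sum_{\beta_i = \beta_1} f_i^2(e_n)$ using the explicit formulas for the eigenvectors (these involve Chebyshev/Hahn polynomials evaluated at small arguments, together with the norming constants the paper has computed in closed form), and similarly get usable upper bounds on $c_k$ for $k \ge 2$. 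Combining, $\chi^2_{e_n}(s) = c_1 (1/4)^{2s} + \sum_{k \ge 2} c_k \beta_k^{2s}$, and since $\beta_k \le \beta_1 = 1/4$ for $k \ge 2$ with a definite gap ($\beta_2 = 9/256 < 1/16$), the tail is dominated by $(1/4)^{2s}$ times a convergent constant; for $s \ge 3$ one gets $\chi^2_{e_n}(s) = (c_1 + o(1))(1/4)^{2s}$ with totally explicit $O$-constants. The lower bound $5(1/4)^{2s}$ follows by keeping only the $k=1$ term once $c_1 \ge 5$ is verified (or $c_1$ plus the nonnegative tail), and the upper bound $270(1/4)^{2s}$ follows from $c_1$ plus a crude geometric bound on $\sum_{k \ge 2} c_k (\beta_k/\beta_1)^{2s}$, using $s \ge 3$ to make the ratio small.

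The main obstacle I anticipate is evaluating $c_1$ and bounding the $c_k$ precisely enough. The eigenvectors from \cref{orthoevthm} are indexed by combinatorial data (tableaux, or pairs of an $S_n$-irreducible label and an $\fsl_2$-weight datum), and their values at a specific simple state like $e_n$ are given by products of normalized Chebyshev and Hahn polynomials; summing their squares over a fixed eigenspace is the kind of ``impossible hypergeometric sum'' the introduction warns about, which the paper resolves via classical closed-form evaluations. So the real work is to (i) pin down exactly which tableaux/labels index eigenvectors not orthogonal to $\delta_{e_n}$, (ii) invoke the relevant closed-form summation (likely a Chu--Vandermonde or Saalsch\"utz-type identity, or a known norm formula for Hahn polynomials) to collapse $\sum f_i^2(e_n)$ within each eigenspace to a clean rational function of $n$ and $k$, and (iii) check that the resulting expression for $c_1$ lies in a constant range (bounded away from $0$ and from above, uniformly in $n \ge 3$) and that $\sum_{k\ge2} c_k$ converges with an explicit bound. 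Once those closed forms are in hand, the inequalities $5(1/4)^{2s} \le \chi^2_{e_n}(s) \le 270(1/4)^{2s}$ for $n, s \ge 3$ reduce to elementary estimates on the head term plus a geometric tail.
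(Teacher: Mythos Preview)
Your proposal follows essentially the same route as the paper: expand $\chi^2_{e_n}(s)$ over the orthonormal basis $\{f_Q^{m,\ell}\}$, isolate the $\beta_1$-contribution for the lower bound, and bound the tail geometrically for the upper bound. Your intuition that ``only a limited range of $S_n$-isotypic components can appear because $|e_n|=1$'' is exactly right and is the key observation: since $\delta_{e_n}$ lies in $\mathrm{Ind}_{S_{n-1}}^{S_n}(\mathbf{1}) \cong S^{(n)}\oplus S^{(n-1,1)}$, only $m\in\{0,1\}$ contribute, and the paper verifies this directly by checking that $g_T^{m,i}(e_j)=0$ for all $m\ge 2$.

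The one place where you overestimate the difficulty is step~(ii). You anticipate needing a Chu--Vandermonde or Saalsch\"utz-type identity to collapse $\sum_Q f_Q^{m,\ell}(e_n)^2$ over the tableaux $Q$, but in fact no such sum arises. For $m=0$ there is a single tableau. For $m=1$ there are $n-1$ tableaux, but the paper shows that by evaluating at $e_n$ specifically (rather than at $e_1$ or some other $e_j$), only the tableau $Q_{(1)}$ with $n$ in the second row survives; for every other $Q$ one has $f_Q^{1,\ell}(e_n)=0$. Hence each $c_k$ is a sum of exactly two explicit rational expressions in $n$ and $\ell$ (given by \cref{numerator0}, \cref{numerator1}, \cref{denominator0}, \cref{denominator1}), and the remaining work is elementary: verify $c_1\ge 5$ for $n\ge 3$, show each $c_k$ is bounded by a polynomial in $k$, and control the tail using $\beta_k < 1/(\pi k)$ together with $\beta_k\le 1/4$ and the hypothesis $s\ge 3$.
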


The proof also gives a template for bounds from other starting states, but we have not pushed this analysis further.

Finally, a host of other results follow from the Schur--Weyl decomposition, such as a description of $K(x, y)$ as a polynomial in the usual generators $\{e, f, h\}$ of the action of $\mathfrak{sl}_2$. Comments on this and other remarks appear in \cref{miscremarkssection}.

\section*{Acknowledgements}

We thank Evita Nestoridi, Allan Sly, Amol Aggarwal, Yuval Peres, Michael Howes, Jonathan Hermon, Lucas Teyssier, Laurent Bartholdi, Christoph Koutschan, Richard Stanley, and Sourav Chatterjee for helpful discussions and ideas. P.D.\ was partially supported by the NSF under Grant No.\ DGE-1954042, and A.L.\ was partially supported by the NSF under Grant No.\ DGE-2146755.

\section{Construction of orthonormal eigenvectors}\label{schurweylsection}

Throughout this paper, in order to make formulas and proofs easier to parse, we will change notation from the usual probabilistic conventions. Specifically, we will view our function space as a tensor product $V^{\otimes n}$ over the $n$ coordinates of $C_2^n$, and we will write all functions as linear combinations of the basis vectors $v_S$ (for subsets $S \subseteq \{1, 2, \ldots, n\}$). 

In this section, we describe how to construct orthogonal subspaces indexed by Young tableaux and obtain explicit formulas for the vectors that lie in those subspaces. In particular, the initial orthogonal decomposition here does not depend on our operator $K$, so (as described in the introduction) it may be useful for other Markov chains on binary $n$-tuples that are $S_n$-invariant. We then use this orthogonal decomposition to describe a complete set of orthogonal eigenvectors $\{f_Q^{m, \ell}\}$ indexed by integers $m, \ell$ (which dictate the eigenvalue) and Young tableaux $Q$ of shape $(n-m, m)$ in \cref{orthoevthm} below. 

\medskip

We first establish all of the notation we will use in this paper. Let $V = \CC\text{-span}\{v_0, v_1\}$, so that the tensor space
\[
    V^{\otimes n} \quad \text{has basis}\quad \{ v_{i_1}\otimes \cdots \otimes v_{i_n} \,|\, i_1, \ldots, i_n \in \{0,1\}\}.
\]
For each subset $S\subseteq \{1,\ldots, n\}$, we write
\[
    v_S = v_{i_1}\otimes \cdots \otimes v_{i_n}, \quad \text{where} \quad i_\ell = \begin{cases} 1 &\hbox{if $\ell \in S$}, \\ 0 &\hbox{if $\ell\not\in S$.} \end{cases}
\]
The subspaces corresponding to the orbits $\mc{O}_\ell = \{x \in C_2^n: |x| = \ell\}$ are
\[
    V^{(\ell)} = \CC\text{-span}\left\{ v_S\ \middle|\ \vert S \vert = \ell\right\}, \quad \text{so that} \quad V^{\otimes n} = \bigoplus_{\ell=0}^n V^{(\ell)}.
\]
Our inner product on $V^{\otimes n}$ can then be written in the form
\begin{equation}\label{lperp}
\langle v_S, v_T \rangle_\pi = \frac{1}{n+1}\frac{1}{\binom{n}{\vert S\vert}} \delta_{ST},
\quad \text{where in particular } V^{(j)} \perp V^{(\ell)}\text{ if }j\ne \ell.
\end{equation}

In addition to the orthogonal subspaces $V^{(\ell)}$, we may also define a set of different orthogonal subspaces indexed by standard Young tableaux. For this, recall that $S_n$ acts by permutations on the coordinates, and so the group algebra of the symmetric group $\CC[S_n]$ acts on $V^{\otimes n}$ by 
\[
    wv_S = v_{wS} \quad \text{ for } w\in S_n \text{ and }S\subseteq \{1, \ldots, n\}.
\]
For $r \in \{2, \ldots, n\}$, the \textit{Jucys-Murphy elements} in the group algebra of the symmetric group (see \cite[Eq. (3.5)]{ramseminormal}, or the report \cite{murphy}, or Murphy's original paper \cite{murphy2}) are the mutually commuting operators
\begin{equation}\label{JMelts}
M_r = \sum_{i=1}^{r-1} s_{ir}, \quad \text{where } s_{ir} \text{ is the transposition that switches } i \text{ and }r.
\end{equation}
Let $\hat S_n^{(n-m,m)}$ denote the set of standard Young tableaux of shape $(n-m, m)$. For $Q \in \hat S_n^{(n-m,m)}$, let $Q(r)$ denote the box containing $r$ in $Q$. Define the \textit{content} of the box via
\[
    \mathrm{ct}(Q(r)) = y-x \text{ if } Q(r) \text{ is in row }x \text{ and column }y,
\]
and also define the subspace of simultaneous eigenvectors
\[
    V_Q = \{ m\in V^{\otimes n}\,|\, M_rm = \text{ct}(Q(r)) m \text{ for all }r \in \{2, \ldots, n\}\}.
\]
These subspaces are mutually orthogonal as $Q$ varies for the following reason. For any transposition $w \in S_n$, we have 
\[
    \langle wv_S, wv_T\rangle_\pi = \langle v_S, v_T\rangle_\pi \implies \langle wv_S, v_T \rangle_\pi = \langle v_S, wv_T \rangle_\pi
\]
because $w^2 = 1$. Thus any linear combination of transpositions in $\CC[S_n]$ is self-adjoint, meaning in particular that all $M_j$ are self-adjoint. Now let $P \ne Q$ be any standard tableaux with $n$ boxes; there must exist some $i \in \{1, \ldots, n\}$ such that $\mathrm{ct}(P(i))\ne \mathrm{ct}(Q(i))$. Then we have for any $p \in V_P$ and $q \in V_Q$ that
\[
    \mathrm{ct}(P(i))\langle p,q\rangle_\pi = \langle M_i p , q\rangle_\pi = \langle p, M_i q\rangle_\pi = \mathrm{ct}(Q(i))\langle p, q\rangle_\pi,
\]
meaning that $\langle p,q\rangle_\pi=0$. This means that for any Young tableaux $P, Q$,
\begin{equation}\label{Qperp}
\text{if }P \ne Q,\text{ then }V_P \perp V_Q.
\end{equation}
We thus obtain a refined decomposition of $V^{\otimes n}$ by defining, for all $\ell \in \{0,1,\ldots, n-m\}$ and $Q\in \hat{S}^{(n-m,m)}_n$, the subspace
\[
    V_Q^{(\ell)} = V_Q \cap V^{(\ell)}.
\]
Combining  \cref{lperp} and \cref{Qperp} gives
\begin{equation}\label{lQperp}
V_P^{(j)} \perp V_Q^{(\ell)} \text{ unless } P = Q \text{ and } j = \ell.
\end{equation}
By Schur--Weyl duality, as well as the representation theory of $\fsl_2$ and of the symmetric group $S_n$ (see \cite[Ex.\ 6.30 and (11.6)]{fulton} and \cite[(3.5), (3.11), and Thm. 3.14]{ramseminormal}), we have
\begin{equation}\label{Vndecomp}
\dim(V_Q^{(\ell)}) = 1 \quad \text{and} \quad
V^{\otimes n} = \bigoplus_{m=0}^{\lfloor n/2\rfloor} \bigoplus_{Q\in \hat S_n^{(n-m,m)}} \bigoplus_{\ell=m}^{n-m} V^{(\ell)}_Q.
\end{equation}
Furthermore, it is a consequence of Schur--Weyl duality that
\begin{align}\label{schurweylsplittingmechanism}
&\text{each } V_Q = \bigoplus_{\ell=m}^{n-m} V_Q^{(\ell)} \text{ is an irreducible }\fsl_2\text{-invariant subspace of }V^{\otimes n}, \text{ and}\nonumber \\
&\text{each } V^{(\ell)}_{(n-m,m)} = \bigoplus_{Q \in \hat{S}_n^{(n-m,m)}} V_Q^{(\ell)} \text{ is an irreducible }S_n\text{-invariant subspace of }V^{\otimes n}.
\end{align}
We will elaborate more on this decomposition in \cref{schurweylsubsection2}. 

\medskip

We now wish to describe each of the one-dimensional subspaces $V_Q^{(\ell)}$ explicitly. First, we show a useful calculation:

\begin{lemma}\label{intprops}
Viewing all elements of $\CC[S_n]$ as operators on $V^{\otimes n}$, define
\begin{equation}\label{taud}
\tau_j = s_j + \frac{1}{M_j-M_{j+1}} \quad \text{for  } j\in \{1, \ldots, n-1\}.
\end{equation}
(Here $\frac{1}{M_j-M_{j+1}}$ denotes the inverse of the operator $M_j - M_{j+1}$, which indeed exists because $M_j, M_{j+1}$ are simultaneously diagonalizable with distinct nonzero eigenvalues.) Then we have the relations
\begin{equation}
\tau_j M_j = M_{j+1}\tau_j, \quad M_j\tau_j = \tau_jM_{j+1}, \quad \text{ and } \quad \tau_j^2 = \frac{(M_j-M_{j+1}+1)(M_j-M_{j+1}-1)}{(M_j-M_{j+1})^2}.
\end{equation}
\end{lemma}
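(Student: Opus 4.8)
The plan is to reduce all three identities to the elementary commutation relations between the adjacent transposition $s_j$ and the two Jucys--Murphy elements $M_j, M_{j+1}$, and then carry out the (short) operator algebra, being careful that $s_j$ does \emph{not} commute with $M_j-M_{j+1}$. First I would record how conjugation by $s_j=s_{j,j+1}$ acts on the defining sums $M_r=\sum_{i=1}^{r-1}s_{ir}$: since $s_j s_{ij}s_j=s_{i,j+1}$ and $s_j s_{i,j+1}s_j=s_{ij}$ for $i<j$, while $s_j s_{j,j+1}s_j=s_{j,j+1}$, one gets $s_jM_js_j=M_{j+1}-s_j$ and $s_jM_{j+1}s_j=M_j+s_j$. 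Multiplying these on the left and on the right by $s_j$ and using $s_j^2=1$ yields the four relations
\[
s_jM_j=M_{j+1}s_j-1,\qquad M_{j+1}s_j=s_jM_j+1,\qquad s_jM_{j+1}=M_js_j+1,\qquad M_js_j=s_jM_{j+1}-1,
\]
where $1$ denotes the identity operator. I would also note that, because all the Jucys--Murphy elements mutually commute, $D:=M_j-M_{j+1}$ commutes with $M_j$ and with $M_{j+1}$; hence $D^{-1}$ (which exists on $V^{\otimes n}$ by the hypothesis of the lemma) also commutes with $M_j$ and $M_{j+1}$.

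Given these, the first two identities are immediate substitutions. Writing $\tau_j=s_j+D^{-1}$ and using that $D^{-1}$ commutes with $M_j$,
\[
\tau_jM_j=s_jM_j+D^{-1}M_j=(M_{j+1}s_j-1)+M_jD^{-1},\qquad M_{j+1}\tau_j=M_{j+1}s_j+M_{j+1}D^{-1},
\]
so $\tau_jM_j-M_{j+1}\tau_j=-1+(M_j-M_{j+1})D^{-1}=-1+DD^{-1}=0$. The relation $M_j\tau_j=\tau_jM_{j+1}$ is proved identically, now using $M_js_j=s_jM_{j+1}-1$ and that $D^{-1}$ commutes with $M_{j+1}$.

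For the squaring identity, expand $\tau_j^2=(s_j+D^{-1})^2=1+s_jD^{-1}+D^{-1}s_j+D^{-2}$. The only nontrivial point is the cross term, for which $s_j$ and $D^{-1}$ do not commute. From the four relations above, $s_jD=s_jM_j-s_jM_{j+1}=(M_{j+1}s_j-1)-(M_js_j+1)=-Ds_j-2$, i.e.\ $s_jD+Ds_j=-2$; multiplying this scalar identity on the left and on the right by $D^{-1}$ gives $D^{-1}s_j+s_jD^{-1}=-2D^{-2}$. Substituting, $\tau_j^2=1-2D^{-2}+D^{-2}=1-D^{-2}=\tfrac{D^2-1}{D^2}=\tfrac{(D-1)(D+1)}{D^2}$, which is the claimed expression, since $D\pm 1$ and $D^{-1}$ are all polynomials in the commuting operators $M_j,M_{j+1}$.

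I expect no serious obstacle: the content is entirely the bookkeeping of a noncommutative computation. The single place needing genuine care is the cross term in $\tau_j^2$, where one must resist assuming $s_j$ commutes with $D^{-1}$ and instead derive the twisted relation $s_jD+Ds_j=-2$ and conjugate it by $D^{-1}$.
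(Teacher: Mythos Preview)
Your proof is correct and follows essentially the same approach as the paper: both derive the basic relation $M_{j+1}s_j = s_jM_j + 1$ (equivalently $s_jM_js_j = M_{j+1}-s_j$) and then verify the three identities by direct operator algebra. The only cosmetic difference is in the computation of $\tau_j^2$: the paper first uses the two intertwining relations to get $\tau_j D^{-1} = -D^{-1}\tau_j$ and then expands $\tau_j^2 = \tau_j s_j + \tau_j D^{-1}$, whereas you instead expand $(s_j+D^{-1})^2$ directly and handle the cross term via the anticommutator identity $s_jD+Ds_j=-2$ conjugated by $D^{-1}$; both routes land on $1-D^{-2}$ in one line.
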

\begin{proof}
We have $M_{j+1} = s_jM_js_j+s_j$ for all $j$, so that $M_{j+1}s_j = s_j M_j + 1$. Therefore, using that $M_j$ and $M_{j+1}$ commute,
\begin{align*}
M_{j+1}\tau_j &= M_{j+1}\Big( s_j+\frac{1}{M_j-M_{j+1}} \Big) \\
&= M_{j+1}s_j + \frac{M_{j+1}}{M_j-M_{j+1}} \\
&= s_jM_j +1+\frac{M_{j+1}}{M_j-M_{j+1}} \\
&= s_jM_j + \frac{M_j}{M_j-M_{j+1}} \\
&= \left(s_j + \frac{1}{M_j-M_{j+1}}\right)M_j = \tau_j M_j,
\end{align*}
proving the first equality. The second equality follows by an identical argument except with all multiplications in the reverse order. Putting those two facts together yields
\[
    (M_{j+1} - M_j)\tau_j = \tau_jM_j - M_j\tau_j = \tau_j(M_j - M_{j+1}).
\]
Therefore we have $\tau_j(\frac{1}{M_j - M_{j+1}}) = (\frac{1}{M_{j+1} - M_j})\tau_j$, which shows that
\begin{align*}
\tau_j^2 &=\tau_j \Big( s_j+\frac{1}{M_j-M_{j+1}} \Big) \\
&= \tau_j s_j + \frac{1}{M_{j+1}-M_j} \tau_j \\
&=  \Big( s_j+\frac{1}{M_j-M_{j+1}} \Big)s_j +\frac{1}{M_{j+1}-M_j}\Big( s_j+\frac{1}{M_j-M_{j+1}} \Big) \\
&=1-\frac{1}{(M_j-M_{j+1})^2}  \\
&= \frac{(M_j-M_{j+1})^2-1}{(M_j-M_{j+1})^2} \\
&= \frac{(M_j-M_{j+1}+1)(M_j-M_{j+1}-1)}{(M_j-M_{j+1})^2},
\end{align*}
completing the proof.
\end{proof}
Now if $j$ and $j+1$ are not in the same row or same column of the tableau $Q$, then $\tau_j^2$ acts as a nonzero constant because $\mathrm{ct}(Q(j)) - \mathrm{ct}(Q(j+1)) \notin \{-1, 0, 1\}$ (note that $0$ is not possible by monotonicity of the rows and columns of $Q$, meaning that $j+1$ cannot be on the same diagonal as $j$). Thus \cref{intprops} implies that
\begin{equation}\label{intiso}
\tau_j \colon V_Q^{(\ell)} \to V_{s_jQ}^{(\ell)}
\quad \text{is a vector space isomorphism if }j, j+1 \text{ are not in the same row or column of }Q.
\end{equation}

\medskip

\noindent \textit{\underline{Definition of eigenvectors}}: With this, we are now ready to define vectors $g_Q^{m,i}$ and $f_Q^{m,\ell}$ corresponding to the various one-dimensional subspaces mentioned above. Let $m \in \{0, 1, \ldots, \lfloor n/2\rfloor\}$. Define the \textit{column reading tableau} of shape $(n-m,m)$ to be the Young tableau
\[
    T = 
    \begin{array}{l}    \boxed{1}\boxed{3}\boxed{5}\boxed{\phantom{1}\cdots\phantom{1}}\boxed{2m-1}\boxed{2m+1}\boxed{2m+2}\boxed{\phantom{1}\cdots\phantom{1}}\boxed{n-1}\boxed{\phantom{1}n\phantom{1}} \\\boxed{2}\boxed{4}\boxed{6}\boxed{\phantom{1}\cdots\phantom{1}}\boxed{2m\phantom{22\,2}}
    \end{array}.
\]
Let $i, \ell \in \{0,1,\ldots, n-2m\}$, and let $\mathbb{S}(n-2m)_\ell$ be the set of subsets of $\{1, \ldots, n-2m\}$ with cardinality $\ell$. Define the scalars
\begin{equation}\label{Tscalarsdefn}
    T^{(\ell)}_{m,n}(i) = \sum_{S\in \mathbb{S}(n-2m)_\ell} (-1)^{m+\vert S\cap \{1, \ldots, i\}\vert} \binom{2m + \ell}{m+\vert S\cap \{1, \ldots, i\}\vert}.
\end{equation}
(These numbers turn out to be the values of certain orthogonal polynomials -- see \cref{actuallyhahn} in \cref{schurweylsubsection2} -- but we will not need that fact for this first proof.) Using these scalars, we first define the following vectors associated to the column reading tableau $T$:
\begin{equation}\label{fgTd}
g^{m, i}_T = (v_{01}-v_{10})^{\otimes m}\otimes \Big(\sum_{S\in \mathbb{S}(n-2m)_i} v_S\Big) \quad \text{and} \quad f^{m, \ell}_T = \sum_{i=0}^{n-2m} T_{m,n}^{(\ell)}(i) g^{m, i}_T.
\end{equation}
Here the subscript of $v_{01}$ stands for the subset $\{2\}$ of $\{1, 2\}$, and the tensor product notation stands for concatenation of the subsets (so that, for example, $v_{01} \otimes v_{01} = v_{0101} = v_{\{2, 4\}}$). In words, $g_T^{m, i}$ is a linear combination of particular $v_S$s with $|S| = m + i$, and $f_T^{m, \ell}$ takes a certain linear combination of these vectors over the various ``levels'' $i$ with coefficients coming from orthogonal polynomial scalars. These $f_T^{m, \ell}$ are the beginning of our orthogonal eigenvector basis.

\medskip

We now explain how to define the vectors associated to any other tableau $Q$ of shape $(n-m, m)$. If $Q$ is the standard Young tableau of shape $(n-m,m)$ with $\boxed{a_1}\boxed{a_2}\boxed{\phantom{a_k}\cdots\phantom{a_m}}\boxed{a_m}$ in the second row, then $a_r \ge 2r$ for all $1 \le r \le m$. Therefore, we can apply a sequence of adjacent transpositions of the boxes of $T$ to get to $Q$:
\[
    Q = c_Q^{(1)}\cdots c_Q^{(m)}T, \quad \text{where} \quad c_Q^{(r)} = 
    \begin{cases}
    s_{a_r-1}\cdots s_{2r+1}s_{2r},&\hbox{if $a_r>2r$,} \\
    1, &\hbox{if $a_r=2r$.}
    \end{cases}
\]
Thanks to \cref{intiso}, the corresponding map $\tau_Q \colon V_T^{(m+i)}\to V_Q^{(m+i)}$ defined by
\begin{equation}\label{tauQd}
\tau_Q = \tau_Q^{(1)}\cdots \tau_Q^{(m)}, \quad \text{where} \quad \tau_Q^{(r)} = 
\begin{cases}
\tau_{a_r-1}\cdots \tau_{2r+1}\tau_{2r},
&\hbox{if $a_r>2r$,} \\
1, &\hbox{if $a_r=2r$,}
\end{cases}
\end{equation}
is a vector space isomorphism. In particular, if $a_r > 2r$, then $\tau_Q^{(r)} = \big(s_{a_r-1}-\frac{1}{a_r-2r+1}\big)\cdots (s_{2r+1}-\hbox{$\frac{1}{3}$}) (s_{2r}-\hbox{$\frac12$})$ is an expression for $\tau_Q^{(r)}$ in terms of the simple reflections in $S_n$. We then define the following vectors associated to the tableau $Q$ (of shape $(n-m, m)$), for any $m \in \{0,1,\ldots, \lfloor n/2\rfloor\}$ and $\ell, i \in \{0, 1, \ldots,n-2m\}$:
\begin{equation}\label{fQdef}
g_Q^{m, i} = \tau_Q g_T^{m, i} \quad \text{and} \quad f^{m, \ell}_Q = \tau_Q f^{m, \ell}_T.
\end{equation}
We will soon see that the $g_Q^{m, i}$s are vectors that lie in various $V_Q^{(\ell)}$ subspaces, while the $f^{m, \ell}_Q$s form an (unnormalized) orthogonal basis of eigenvectors. Before we state the main result, we define some constants that appear crucially in the norms. For the Young tableau $Q$ notated above, define the scalar
\begin{equation}\label{gmQ}
\gamma_Q = \gamma_Q^{(1)}\cdots \gamma_Q^{(m)}, \quad \text{where} \quad \gamma_Q^{(r)} = 
\begin{cases}
\frac{((a_r-2r+1)^2-1)}{(a_r-2r+1)^2}\cdots \frac{(3^2-1)}{3^2}\cdot \frac{(2^2-1)}{2^2}, 
&\hbox{if $a_r>2r$,} \\
1, &\hbox{if $a_r=2r$.}
\end{cases}
\end{equation}

\begin{theorem}\label{orthoevthm}
Using notation as in \cref{fQdef} and \cref{gmQ}, with respect to the inner product $\langle \cdot, \cdot\rangle_\pi$, the set
\[
    \left\{ f^{m,\ell}_Q: m \in \{0,1,\ldots, \lfloor n/2\rfloor\}, \quad Q\in \hat{S}_n^{(n-m,m)}, \quad \ell \in \{0,1,\ldots, n-2m\} \right\}
\]
is an orthogonal basis of $V^{\otimes n}$. With the notation for eigenvalues $\beta_k$ as in \cref{unlumpedeigenvalues}, 
\[
    f^{m,\ell}_Q \text{ is an eigenvector of }K_n \text{ of eigenvalue }\begin{cases} \beta_{(m+\ell)/2} & \text{if }m + \ell \text{ is even}, \\ 0 & \text{otherwise,} \end{cases}
\]
and 
\begin{align}\label{fmlqnorm}
    \langle f^{m, \ell}_Q, f^{m, \ell}_Q\rangle_{\pi} &= \gamma_Q \langle f_T^{m,\ell}, f_T^{m,\ell} \rangle_{\pi} \nonumber \\
    &= \gamma_Q\frac{2^m}{n+1} \sum_{i=0}^{n-2m} \big(T_{m,n}^{(\ell)}(i)\big)^2 \frac{\binom{n-2m}{i}}{\binom{n}{m+i}}.
\end{align}
We also express this sum in closed form in \cref{hahnpolynomialnorms}.
\end{theorem}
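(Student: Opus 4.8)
The plan is to reduce everything to the explicit ``column reading'' case $Q=T$ and then dispatch the three assertions — that $\{f^{m,\ell}_Q\}$ is a basis, that each $f^{m,\ell}_Q$ is a $K$-eigenvector with the stated eigenvalue, and the norm formula — in turn. A short direct computation (using that $v_{01}-v_{10}$ is an $\fsl_2$-singlet, so that for $r>2m$ the pairs of transpositions $s_{2j-1,r}+s_{2j,r}$ act as the identity on $(v_{01}-v_{10})^{\otimes m}$) shows that $g^{m,0}_T=(v_{01}-v_{10})^{\otimes m}\otimes v_0^{\otimes(n-2m)}$ is a nonzero $M_r$-eigenvector with the contents of $T$; applying $i$ times the level-raising generator of $\fsl_2$ — it commutes with every $M_r$ — gives $g^{m,i}_T\in V^{(m+i)}_T$, which therefore spans the one-dimensional space of \cref{Vndecomp}. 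Since each $\tau_Q$ of \cref{tauQd} is a composite of the isomorphisms \cref{intiso}, $g^{m,i}_Q=\tau_Q g^{m,i}_T$ spans $V^{(m+i)}_Q$, so by \cref{lQperp} the family $\{g^{m,i}_Q\}$ is an orthogonal basis of $V^{\otimes n}$. Because $f^{m,\ell}_Q=\sum_i T^{(\ell)}_{m,n}(i)\,g^{m,i}_Q$, the $\{f^{m,\ell}_Q\}$ form a basis exactly when, for each $(m,Q)$, the matrix $\big(T^{(\ell)}_{m,n}(i)\big)_{0\le\ell,i\le n-2m}$ is invertible; grouping the sum in \cref{Tscalarsdefn} by $k=|S\cap\{1,\dots,i\}|$ rewrites it as $T^{(\ell)}_{m,n}(i)=(-1)^m\sum_k(-1)^k\binom ik\binom{n-2m-i}{\ell-k}\binom{2m+\ell}{m+k}$, a polynomial of degree $\le\ell$ in $i$ whose top coefficient is $(-1)^{m+\ell}\binom{2m+2\ell}{m+\ell}/\ell!\ne0$ by Chu--Vandermonde, so these polynomials have distinct degrees and the matrix is invertible. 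The $f^{m,\ell}_Q$ with distinct $(m,Q)$ are automatically orthogonal by \cref{lQperp}, so only orthogonality within a fixed $V_Q$ remains; it is handled below.

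For the eigenvector property, $K$ commutes with every operator in $\CC[S_n]$ — with the $s_j$ and the $M_j$, hence (as $(M_j-M_{j+1})^{-1}$ is a polynomial in $M_j-M_{j+1}$) with the $\tau_j$ and with $\tau_Q$ — so $Kf^{m,\ell}_Q=\tau_Q\big(Kf^{m,\ell}_T\big)$ and it suffices to treat $Q=T$. Using the explicit formula for $K(x,y)$ from \cite{diaconislinram1} and the explicit expansion of $g^{m,i}_T$, one computes $Kg^{m,i}_T=\sum_j c^m_{ij}\,g^{m,j}_T$ for explicit coefficients $c^m_{ij}$ (the sum staying in $V_T$ because $K$ commutes with the $M_r$). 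The claim then becomes that the vector $\big(T^{(\ell)}_{m,n}(i)\big)_i$ is an eigenvector of the matrix $(c^m_{ij})$ with eigenvalue $\beta_{(m+\ell)/2}$ when $m+\ell$ is even and $0$ when $m+\ell$ is odd — a binomial-sum identity, provable by a generating-function manipulation or creative telescoping, which simultaneously recovers the eigenvalues and multiplicities of \cref{unlumpedeigenvalues} since the eigenvalue depends on $m$ and $\ell$ only through $m+\ell$.

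For the norms, each $\tau_j$ is self-adjoint for $\langle\cdot,\cdot\rangle_\pi$ ($s_j$ is self-adjoint because $s_j^2=1$, and $(M_j-M_{j+1})^{-1}$ is a function of the self-adjoint $M_j-M_{j+1}$), and by \cref{intprops} the operator $\tau_j^2$ acts on each tableau eigenspace as the scalar $\big((M_j-M_{j+1})^2-1\big)/(M_j-M_{j+1})^2$; telescoping these scalars along the transpositions building $\tau_Q$ shows $\tau_Q^{\,*}\tau_Q$ acts on $V_T$ as $\gamma_Q$ of \cref{gmQ}, whence $\langle f^{m,\ell}_Q,f^{m,\ell'}_Q\rangle_\pi=\gamma_Q\,\langle f^{m,\ell}_T,f^{m,\ell'}_T\rangle_\pi$. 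Since $g^{m,i}_T$ is a $\pm1$-combination of $2^m\binom{n-2m}{i}$ distinct basis vectors $v_S$ with $|S|=m+i$, \cref{lperp} gives $\langle g^{m,i}_T,g^{m,i}_T\rangle_\pi=\frac{2^m\binom{n-2m}{i}}{(n+1)\binom n{m+i}}$ and $g^{m,i}_T\perp g^{m,j}_T$ for $i\ne j$, so expanding $f^{m,\ell}_T$ gives $\langle f^{m,\ell}_T,f^{m,\ell'}_T\rangle_\pi=\frac{2^m}{n+1}\sum_i T^{(\ell)}_{m,n}(i)\,T^{(\ell')}_{m,n}(i)\,\frac{\binom{n-2m}{i}}{\binom n{m+i}}$, which is \cref{fmlqnorm} when $\ell=\ell'$. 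For $\ell\ne\ell'$: if $\beta_{(m+\ell)/2}\ne\beta_{(m+\ell')/2}$, orthogonality follows from self-adjointness of $K$; the remaining case — both eigenvalues $0$, i.e.\ $m+\ell$ and $m+\ell'$ both odd — is the statement that the $T^{(\ell)}_{m,n}$ are orthogonal polynomials for the weight $\binom{n-2m}{i}/\binom n{m+i}$ on $\{0,\dots,n-2m\}$, i.e.\ another binomial-sum identity.

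I expect the main obstacle to be exactly these two summation identities — the eigenvalue identity and the orthogonality of the $T^{(\ell)}_{m,n}$ inside the highly degenerate $0$-eigenspace — each an ``impossible-looking'' sum of three binomial coefficients weighted by a beta-binomial. The clean shortcut would be to recognize $T^{(\ell)}_{m,n}$ as a scalar multiple of a Hahn polynomial (\cref{actuallyhahn}) and quote the classical difference equation and orthogonality relation; to keep this first proof self-contained I would instead verify precisely the two identities needed by hand. The closed-form evaluation of the norm sum in \cref{fmlqnorm}, deferred to \cref{hahnpolynomialnorms}, likewise comes from the classical formula for the $L^2$-norm of the Hahn polynomials — this is where the ``classical formulas'' that tame the hypergeometric sums enter.
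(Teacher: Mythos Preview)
Your overall architecture is correct and matches the paper: you place $g^{m,i}_T$ in $V^{(m+i)}_T$, transport by $\tau_Q$ to get an orthogonal basis $\{g^{m,i}_Q\}$, deduce the norm relation $\langle f^{m,\ell}_Q,f^{m,\ell'}_Q\rangle_\pi=\gamma_Q\langle f^{m,\ell}_T,f^{m,\ell'}_T\rangle_\pi$ from self-adjointness of $\tau_j$ and the scalar action of $\tau_j^2$, and handle the orthogonality inside the zero eigenspace by a binomial identity (the paper does exactly this last step, via a WZ certificate, in \cref{tableauorthogonality}).

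The real divergence is in the eigenvector property. You propose to compute $Kg^{m,i}_T=\sum_j c^m_{ij}\,g^{m,j}_T$ from the explicit kernel $K(x,y)$ and then verify that $\big(T^{(\ell)}_{m,n}(i)\big)_i$ is an eigenvector of $(c^m_{ij})$ with eigenvalue $\beta_{(m+\ell)/2}$ or $0$. That is in principle feasible but is by far the hardest step in your outline, and you have not written down $c^m_{ij}$ or the identity to be checked; the kernel $K(x,y)$ is a product of four central binomials divided by two others, so the resulting sum is genuinely unpleasant. The paper bypasses this entirely with the key \cref{fbyPhi}: if $\Phi\colon V^{\otimes n}\to V^{\otimes n}$ is the linear isomorphism sending $v_S\mapsto f_S$ (the already-known eigenvectors of \cref{eigenvectors}), then a direct telescoping computation shows $f^{m,\ell}_T=\Phi(g^{m,\ell}_T)$. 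Since $\Phi$ is an $S_n$-module map (because $wf_S=f_{wS}$), it commutes with $\tau_Q$, giving $f^{m,\ell}_Q=\Phi(g^{m,\ell}_Q)$ for every $Q$. Now $g^{m,\ell}_Q$ is supported on $v_S$ with $|S|=m+\ell$, so $f^{m,\ell}_Q$ is a combination of $f_S$ with $|S|=m+\ell$, and the eigenvalue is read off from \cref{eigenvectors} with no new identity to prove. This $\Phi$ trick also gives the basis property for free ($\Phi$ is an isomorphism carrying the $g$-basis to the $f$-family), so your degree argument on the matrix $\big(T^{(\ell)}_{m,n}(i)\big)$, while correct, is not needed.

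In short: your proposal is sound, but the one identity you flag as the ``main obstacle'' for the eigenvalue statement is precisely what the paper avoids by introducing $\Phi$; that lemma is the idea you are missing, and it collapses your hardest step to a citation of \cref{eigenvectors}.
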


To illustrate a concrete example, we write out the list of $g$ and $f$ vectors in the case $n = 3$. In \cref{examplegQvecs}, the first three columns indicate the values of $m$, $\ell$, and the tableau $Q$, and the last eight columns are the entries of the vector $g_Q^{m, \ell}$ evaluated at each state. (For example, the column for $011$ corresponds to $v_{\{2, 3\}} = v_0 \otimes v_1 \otimes v_1$.) The analogous table for the $f_Q^{m, \ell}$ vectors appears in \cref{examplefQvecs}, along with an extra column for displaying the normalizing factors $\langle f_Q^{m, \ell}, f_Q^{m, \ell} \rangle$. 

\begin{figure}
\begin{center}
\begin{tabular}{|c|c|c||c|c|c|c|c|c|c|c|}
\hline
$m$ & $\ell$ & $Q$ & $000$ & $001$ & $010$ & $011$ & $100$ & $101$ & $110$ & $111$ \\
\hline\hline
$0$ & $0$ & $\begin{array}{l}\boxed{1}\boxed{2}\boxed{3}\end{array}$ & $1$ & $0$ & $0$ & $0$ & $0$ & $0$ & $0$ & $0$ \\
\hline
$0$ & $1$ & $\begin{array}{l}\boxed{1}\boxed{2}\boxed{3}\end{array}$ & $0$ & $1$ & $1$ & $0$ & $1$ & $0$ & $0$ & $0$ \\
\hline
$1$ & $0$ & $\begin{array}{l}\boxed{1}\boxed{2} \\\boxed{3}\end{array}$ & $0$ & $1$ & $-\frac{1}{2}$ & $0$ & $-\frac{1}{2}$ & $0$ & $0$ & $0$ \\ 
\hline
$1$ & $0$ & $\begin{array}{l}\boxed{1}\boxed{3} \\\boxed{2}\end{array}$ & $0$ & $0$ & $1$ & $0$ & $-1$ & $0$ & $0$ & $0$ \\
\hline
$0$ & $2$ & $\begin{array}{l}\boxed{1}\boxed{2}\boxed{3}\end{array}$ & $0$ & $0$ & $0$ & $1$ & $0$ & $1$ & $1$ & $0$ \\
\hline
$1$ & $1$ & $\begin{array}{l}\boxed{1}\boxed{2} \\\boxed{3}\end{array}$ & $0$ & $0$ & $0$ & $\frac{1}{2}$ & $0$ & $\frac{1}{2}$ & $-1$ & $0$ \\
\hline
$1$ & $1$ & $\begin{array}{l}\boxed{1}\boxed{3} \\\boxed{2}\end{array}$ & $0$ & $0$ & $0$ & $1$ & $0$ & $-1$ & $0$ & $0$ \\
\hline
$0$ & $3$ & $\begin{array}{l}\boxed{1}\boxed{2}\boxed{3}\end{array}$ & $0$ & $0$ & $0$ & $0$ & $0$ & $0$ & $0$ & $1$ \\
\hline
\end{tabular}
\end{center}
\caption{The eight vectors $g_Q^{m, \ell}$ for $n = 3$. Observe that we have $g_Q^{m, \ell} \in V^{(m+\ell)}$ in all cases; that is, the vector $g_Q^{m, \ell}$ is only supported on the states $S$ where $|S| = m+\ell$.}\label{examplegQvecs}
\end{figure}

\begin{figure}
\begin{center}
\begin{tabular}{|c|c|c||c|c|c|c|c|c|c|c||c|}
\hline
$m$ & $\ell$ & $Q$ & $000$ & $001$ & $010$ & $011$ & $100$ & $101$ & $110$ & $111$ & $\langle f_Q^{m, \ell}, f_Q^{m, \ell} \rangle$ \\
\hline\hline
$0$ & $0$ & $\begin{array}{l}\boxed{1}\boxed{2}\boxed{3}\end{array}$ & $1$ & $1$ & $1$ & $1$ & $1$ & $1$ & $1$ & $1$ & $1$ \\
\hline
$0$ & $1$ & $\begin{array}{l}\boxed{1}\boxed{2}\boxed{3}\end{array}$ & $3$ & $1$ & $1$ & $-1$ & $1$ & $-1$ & $-1$ & $-3$ & $5$ \\
\hline
$1$ & $0$ & $\begin{array}{l}\boxed{1}\boxed{2} \\\boxed{3}\end{array}$ & $0$ & $-2$ & $1$ & $-1$ & $1$ & $-1$ & $2$ & $0$ & $1$ \\ 
\hline
$1$ & $0$ & $\begin{array}{l}\boxed{1}\boxed{3} \\\boxed{2}\end{array}$ & $0$ & $0$ & $-2$ & $-2$ & $2$ & $2$ & $0$ & $0$ & $\frac{4}{3}$ \\
\hline
$0$ & $2$ & $\begin{array}{l}\boxed{1}\boxed{2}\boxed{3}\end{array}$ & $3$ & $-3$ & $-3$ & $-3$ & $-3$ & $-3$ & $-3$ & $3$ & $9$ \\
\hline
$1$ & $1$ & $\begin{array}{l}\boxed{1}\boxed{2} \\\boxed{3}\end{array}$ & $0$ & $-3$ & $\frac{3}{2}$ & $\frac{3}{2}$ & $\frac{3}{2}$ & $\frac{3}{2}$ & $-3$ & $0$ & $\frac{9}{4}$ \\
\hline
$1$ & $1$ & $\begin{array}{l}\boxed{1}\boxed{3} \\\boxed{2}\end{array}$ & $0$ & $0$ & $-3$ & $3$ & $3$ & $-3$ & $0$ & $0$ & $3$ \\
\hline
$0$ & $3$ & $\begin{array}{l}\boxed{1}\boxed{2}\boxed{3}\end{array}$ & $1$ & $-3$ & $-3$ & $3$ & $-3$ & $3$ & $3$ & $-1$ & $5$ \\
\hline
\end{tabular}
\end{center}
\caption{The eight vectors $f_Q^{m, \ell}$ for $n = 3$. Observe that the vector $f_Q^{m, \ell}$ with $m + \ell = 0$ is an eigenvector of $K_3$ with eigenvalue $\beta_0 = 1$, the three vectors with $m + \ell = 2$ are eigenvectors with eigenvalue $\beta_1 = \frac{1}{4}$, and all other vectors are eigenvectors with eigenvalue $0$.}\label{examplefQvecs}
\end{figure}

Before beginning the proof, we make two important remarks for interpreting the content of the theorem.

\begin{remark}
There are two different sets of vectors here, $\{g_Q^{m, \ell}\}$ and $\{f_Q^{m, \ell}\}$ (with the same range of integers $m, \ell$ and corresponding tableaux $Q$). Both sets of vectors form orthogonal bases of $V^{\otimes n}$ with respect to $\pi$. We emphasize that the definition of the $g_Q^{m, \ell}$s does not depend on the transition matrix $K$ at all, and the proof of their orthogonality with respect to $\pi$ only uses the fact that $\pi$ is constant on each $\mc{O}_i$ (which is implied by the $S_n$-invariance). On the other hand, the $f_Q^{m, \ell}$s are defined as certain complicated linear combinations of the $g_Q^{m, \ell}$s. The orthogonality of these vectors also does not depend on the transition matrix, but these combinations are only eigenvectors because of \cref{fbyPhi} below, which makes explicit use of already-known eigenvectors of $K$.
\end{remark}

For our next remark, it will be useful to write out what those (non-orthogonal) eigenvectors of $K$ actually are. The following result is a restatement of \cite[Theorem 1.1]{diaconislinram1}, along with the remark at the end of Section 2.2 in that paper, in our current notation:

\begin{proposition}[\cite{diaconislinram1}]\label{eigenvectors}
For each subset $S \subseteq \{1, \ldots, n\}$, define
\[
    f_S = \sum_{T\subseteq \{1, \ldots, n\}} (-1)^{\vert S\cap T\vert} \binom{\vert S\vert}{\vert S\cap T\vert} v_T.
\]
Again recall the eigenvalues and multiplicities from \cref{unlumpedeigenvalues} above. Then $f_S$ is an eigenvector of $K_n$ of eigenvalue $\beta_k$ if $|S| = 2k$ is even, and it is an eigenvector of $K_n$ of eigenvalue $0$ otherwise.
\end{proposition}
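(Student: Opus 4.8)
The plan is to verify the claim by a direct computation with the transition matrix, exploiting the product structure of the Burnside step. Since $K$ commutes with the $S_n$-action and $S_n$ acts transitively on the $\binom{n}{|S|}$ subsets of any fixed size, it suffices to prove $Kf_S=\beta_{s/2}f_S$ (resp.\ $Kf_S=0$) for a single $S$ of each cardinality $s=|S|$; fix $S=\{1,\ldots,s\}$. Both $f_S$ and $Kf_S$ are invariant under $S_S\times S_{S^c}\subseteq S_n$, so each is a function of the pair $(p,i):=(|S\cap T|,|T|)$, where $T$ is the set of ones of the argument. Moreover $f_S$ depends on $p$ alone, via $f_S(T)=(-1)^p\binom{s}{p}$, so the real content is that $Kf_S$ also depends on $p$ alone, and equals $\beta_{s/2}(-1)^p\binom{s}{p}$ when $s$ is even and $0$ when $s$ is odd.

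I would factor the Burnside step as $K=AB$, where $A$ sends a configuration $x$ (with ones on $T$) to the random set partition of $\{1,\ldots,n\}$ cut out by the cycles of a uniform element of $S_T\times S_{T^c}$, and $B$ \emph{recolors}: from a partition $\rho$ with blocks $B_1,\ldots,B_c$ it colors each block $0$ or $1$ independently with probability $\tfrac12$. Then $Kf_S=A(Bf_S)$, and $(Bf_S)(\rho)=2^{-c(\rho)}\sum_{T'}f_S(T')$, the inner sum over all $T'$ that are unions of blocks of $\rho$. For the odd case this already finishes, by an involution: since $f_S(T')=(-1)^{|S\cap T'|}\binom{s}{|S\cap T'|}$ depends only on $|S\cap T'|$, and since $\rho$ partitions all of $\{1,\ldots,n\}$ (so $|S\cap T'|+|S\setminus T'|=s$), pairing each union of blocks $T'$ with the complementary union of blocks gives $f_S(T')+f_S(\{1,\ldots,n\}\setminus T')=\big(1+(-1)^s\big)(-1)^{|S\cap T'|}\binom{s}{|S\cap T'|}$, which vanishes when $s$ is odd; as $T'\mapsto\{1,\ldots,n\}\setminus T'$ is a fixed-point-free involution on the unions of blocks, $Bf_S=0$ and hence $Kf_S=0$.

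For the even case $s=2k$, I would pass to generating functions. Writing $(-1)^m\binom{2k}{m}=[x^m](1-x)^{2k}$ and using $\sum_j|S\cap B_j|=|S|=2k$, one obtains $(Bf_S)(\rho)=[x^{2k}]\big((1-x)^{2k}\prod_j(1+x^{|S\cap B_j|})\big)$. Applying $A$ replaces the sum over cycle partitions $\rho$ of $S_T\times S_{T^c}$, weighted by $\tfrac1{|T|!\,|T^c|!}\prod_j(|B_j|-1)!\cdot 2^{-c(\rho)}$, with an instance of the exponential formula for set partitions carrying two kinds of elements ($S$-elements and the rest). The single-block contribution sums to $-\tfrac12\log(1-u-vx)-\tfrac12\log(1-u-v)$, so the partition generating function factors as $(1-u-vx)^{-1/2}(1-u-v)^{-1/2}$ with $v$ marking $S$-elements, and this is exactly where the square-root series $(1-z)^{-1/2}=\sum_n\binom{2n}{n}z^n/4^n$ enters. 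Extracting the relevant coefficients in $u,v$ for the $T$- and $T^c$-parts (via Vandermonde), multiplying by $(1-x)^{2k}$, and taking $[x^{2k}]$ produces a single terminating hypergeometric sum in $p$ and $i$ that must be shown to equal $\beta_k(-1)^p\binom{2k}{p}$; in particular it must come out independent of $i$, and with normalization $\binom{2k}{k}^2/4^{2k}$.

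This last evaluation is the crux, and the place where the ``impossible'' hypergeometric sums alluded to in the introduction appear; everything preceding it is bookkeeping with the factorization $K=AB$ and the exponential formula. It can be dispatched either by Zeilberger/WZ certification or, more transparently, by recognizing the sum as a value of the relevant Hahn/Chebyshev orthogonal polynomials (cf.\ \cref{actuallyhahn}), which is also what pins down the constant $\beta_k=\binom{2k}{k}^2/4^{2k}$. (Alternatively, $Kf_S=\beta_kf_S$ is recorded in \cite[Theorem 1.1]{diaconislinram1}, together with the explicit transition probabilities of \cite[Lemma 3.1]{diaconislinram1}, and one may simply cite it, as the phrasing of \cref{eigenvectors} indicates; the argument above is the route I would take to reprove it from scratch.)
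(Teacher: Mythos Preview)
Your odd-$|S|$ argument via the block-complement involution is correct and clean. Your even-$|S|$ sketch is a genuinely different route from the paper's own reproof (given in \cref{miscremarkssection}). There, rather than computing $Kf_S$ directly, the paper exploits the projection identity $K_n(I^{\otimes(n-1)}\otimes K_1)=K_{n-1}\otimes K_1$ (\cref{Kprojalgebraic}) together with the observation that $f_{\{1,\ldots,\ell\}}=2^{n-\ell}(I^{\otimes\ell}\otimes K_1^{\otimes(n-\ell)})(f_{\{1,\ldots,\ell\}}\otimes v_0^{\otimes(n-\ell)})$. Iterating the projection identity collapses $K_nf_S$ to $(K_\ell\otimes K_1^{\otimes(n-\ell)})(f_{\{1,\ldots,\ell\}}\otimes v_0^{\otimes(n-\ell)})$, so the question reduces to whether the $S_\ell$-invariant function $T\mapsto(-1)^{|T|}\binom{\ell}{|T|}$ is an eigenfunction of $K_\ell$---i.e., a one-variable question about the lumped chain on $\{0,\ldots,\ell\}$, where the eigenvalues are already known. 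This sidesteps the hypergeometric evaluation you flag as ``the crux'' entirely.

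Your direct approach is in principle sound, but as written it has a small slip and a real gap. The slip: your displayed formula $(Bf_S)(\rho)=[x^{2k}]\big((1-x)^{2k}\prod_j(1+x^{|S\cap B_j|})\big)$ drops the factor $2^{-c(\rho)}$ that you correctly recorded one line earlier; the product should be $\prod_j\tfrac12(1+x^{|S\cap B_j|})$. The gap: the exponential-formula step needs to track simultaneously which blocks of $\rho$ lie in $T$ versus $T^c$ (since $\rho$ comes from $S_T\times S_{T^c}$, not $S_n$), and your ``single-block contribution'' formula conflates these; making this precise and then extracting the coefficient is exactly the nontrivial identity you defer. So your route works only if you either carry out that evaluation (WZ or otherwise) or cite \cite{diaconislinram1} as you note---whereas the paper's structural reduction avoids that work altogether.
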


\begin{remark}
The relation between the $f_Q^{m, \ell}$ and $g_Q^{m, \ell}$ vectors in \cref{fQdef} can be made more transparent. Thanks to \cref{fbyPhi} below, each $f$ is actually a linear combination of the $f_S$ vectors from \cref{eigenvectors}, with coefficients given by the corresponding $g$. For instance, $g_{\footnotesize{\begin{array}{l}\boxed{1}\boxed{2}\boxed{3}\end{array}}}^{0, 2}$ has a $1$ in each of the entries $011, 101$, and $110$, and indeed a direct calculation shows that $f_{\footnotesize\begin{array}{l}\boxed{1}\boxed{2}\boxed{3}\end{array}}^{0, 2} = 1f_{\{2, 3\}} + 1f_{\{1, 3\}} + 1f_{\{1, 2\}}$.

Therefore, if there were another Markov chain in which we had a full basis of (not necessarily orthogonal) eigenvectors $f_S$ indexed by subsets $S$ of $\{1, \ldots, n\}$, which also satisfy $f_{wS} =wf_S$, then this same $g$-weighted combination of those vectors may also lead to orthogonal eigenvectors in that chain. However, one step of our proof relies on an explicit calculation (\cref{tableauorthogonality}) which is dependent on our choice of $f_S$, essentially because all $S$ of odd size correspond to the same eigenvalue of $0$. The full proof of \cref{orthoevthm} in the next section will also make these remarks more clear.
\end{remark}

\section{Proof of \cref{orthoevthm}}\label{schurweylproofsection}

We first prove a crucial lemma relating some of our $f$ and $g$ vectors to the previous $f_S$ eigenvectors from \cref{eigenvectors}.

\begin{lemma}\label{fbyPhi} 
Let $T$ be the column reading tableau of shape $(n-m,m)$ and let $f^{m,\ell}_{T}$ and $g_T^{m,\ell}$ be as defined in \cref{fgTd}; also recall the definition of $f_S$ from \cref{eigenvectors}. Define the vector space isomorphism $\Phi: V^{\otimes n} \to V^{\otimes n}$ by setting $\Phi(v_S) = f_S$ and extending by linearity. Then
\[
    f^{m,\ell}_{T} = \Phi(g_T^{m, \ell}).
\]
\end{lemma}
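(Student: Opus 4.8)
The plan is to expand both sides of $f^{m,\ell}_T=\Phi(g_T^{m,\ell})$ in the coordinate basis $\{v_W : W\subseteq\{1,\ldots,n\}\}$, using a generating-function rewriting of the vectors $f_S$, and then to reduce the claim to a routine binomial identity. Throughout, $[t^k]F$ denotes the coefficient of $t^k$ in a polynomial $F$.

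\textbf{Step 1 (product form of $f_S$).} First I would record that for every $S\subseteq\{1,\ldots,n\}$,
\[
f_S \;=\; [t^{|S|}]\;(1+t)^{|S|}\bigotimes_{i=1}^{n} w_i,\qquad w_i=\begin{cases} tv_0-v_1,& i\in S,\\ v_0+v_1,& i\notin S.\end{cases}
\]
This is immediate from \cref{eigenvectors}: expanding $\bigotimes_{i\in S}(tv_0-v_1)=\sum_{R\subseteq S}t^{|S|-|R|}(-1)^{|R|}(\text{the basis vector with }1\text{'s exactly on }R)$ and applying $[t^{|S|}](1+t)^{|S|}(\,\cdot\,)$ turns $t^{|S|-|R|}(-1)^{|R|}$ into $(-1)^{|R|}\binom{|S|}{|R|}$; tensoring with $v_0+v_1$ on the complement and summing over $R$ recovers the displayed formula for $f_S$. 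The point of this form is that it factors over the $n$ tensor slots, so that $\Phi$ can be pushed through slot by slot.

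\textbf{Step 2 (apply $\Phi$ to $g_T^{m,\ell}$).} For $\epsilon\in\{0,1\}^m$ set $B(\epsilon)=\{2r-\epsilon_r : 1\le r\le m\}$, a transversal of the pairs $\{1,2\},\{3,4\},\ldots,\{2m-1,2m\}$, so that $(v_{01}-v_{10})^{\otimes m}=\sum_\epsilon(-1)^{|\epsilon|}v_{B(\epsilon)}$ and hence, by \cref{fgTd} (with the index $i$ there set to $\ell$),
\[
\Phi(g_T^{m,\ell})=\sum_{\epsilon\in\{0,1\}^m}\ \sum_{\substack{U\subseteq\{2m+1,\ldots,n\}\\ |U|=\ell}}(-1)^{|\epsilon|}\,f_{B(\epsilon)\cup U}.
\]
Insert the product form of Step 1 (with $|B(\epsilon)\cup U|=m+\ell$) and separate the first $2m$ slots from the last $N:=n-2m$. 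The first $2m$ slots contribute $\sum_\epsilon(-1)^{|\epsilon|}$ times a tensor product over the $m$ pairs; on pair $r$, summing over $\epsilon_r\in\{0,1\}$ gives $(v_0+v_1)\otimes(tv_0-v_1)-(tv_0-v_1)\otimes(v_0+v_1)$, which simplifies to $-(1+t)(v_{01}-v_{10})$. Thus the first $2m$ slots assemble to $(-1)^m(1+t)^m(v_{01}-v_{10})^{\otimes m}$ — so the factor $(v_{01}-v_{10})^{\otimes m}$ reappears automatically, and in particular $\Phi(g_T^{m,\ell})$ lands in the same span $\{(v_{01}-v_{10})^{\otimes m}\otimes v_U\}$ as $f_T^{m,\ell}$. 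For the last $N$ slots I would track $|U|$ with an auxiliary variable $u$, so that $\sum_{|U|=\ell}\bigotimes_{i\in U}(tv_0-v_1)\otimes\bigotimes_{i\notin U}(v_0+v_1)=[u^\ell]\bigotimes_{i=2m+1}^n\big((1+ut)v_0+(1-u)v_1\big)=[u^\ell]\sum_U(1-u)^{|U|}(1+ut)^{N-|U|}v_U$. Combining the two parts,
\[
\Phi(g_T^{m,\ell})=(-1)^m\,(v_{01}-v_{10})^{\otimes m}\otimes\!\!\sum_{U\subseteq\{2m+1,\ldots,n\}}\!\!\Big([t^{m+\ell}u^{\ell}]\,(1+t)^{2m+\ell}(1-u)^{|U|}(1+ut)^{N-|U|}\Big)v_U.
\]

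\textbf{Step 3 (reduce to a binomial identity).} By \cref{fgTd}, $f_T^{m,\ell}=\sum_i T_{m,n}^{(\ell)}(i)g_T^{m,i}=(v_{01}-v_{10})^{\otimes m}\otimes\sum_U T_{m,n}^{(\ell)}(|U|)v_U$, so matching coefficients with Step 2 reduces the lemma to the scalar identity
\[
(-1)^m\,[t^{m+\ell}u^{\ell}]\;(1+t)^{2m+\ell}(1-u)^{i}(1+ut)^{N-i}\;=\;T_{m,n}^{(\ell)}(i)\qquad(0\le i\le N).
\]
Extracting $[u^\ell]$ from $(1-u)^i(1+ut)^{N-i}$ gives $\sum_a(-1)^a\binom ia\binom{N-i}{\ell-a}t^{\ell-a}$, and then $[t^{m+\ell}]$ of $t^{\ell-a}(1+t)^{2m+\ell}$ is $\binom{2m+\ell}{m+a}$, so the left side equals $(-1)^m\sum_a(-1)^a\binom ia\binom{N-i}{\ell-a}\binom{2m+\ell}{m+a}$. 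On the other side, grouping the sum in \cref{Tscalarsdefn} by the value $a=|S\cap\{1,\ldots,i\}|$ — there being $\binom ia\binom{N-i}{\ell-a}$ subsets $S\subseteq\{1,\ldots,N\}$ with $|S|=\ell$ and $|S\cap\{1,\ldots,i\}|=a$ — gives exactly $T_{m,n}^{(\ell)}(i)=(-1)^m\sum_a(-1)^a\binom ia\binom{N-i}{\ell-a}\binom{2m+\ell}{m+a}$, which matches.

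\textbf{Main obstacle.} Beyond routine bookkeeping, the only real ingredients are the product form of $f_S$ (Step 1) and the pair-by-pair collapse of the antisymmetrizer (Step 2). The principal risk is a sign error in one of these, which I would guard against by checking the final formula for $\Phi(g_T^{m,\ell})$ against the small cases in \cref{examplegQvecs} and \cref{examplefQvecs}.
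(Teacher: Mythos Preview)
Your proof is correct. It arrives at the same conclusion as the paper but packages the computation differently, and the difference is worth noting. The paper works directly with the definition of $f_S$: it computes $f_{01S}-f_{10S}$ by hand, uses Pascal's identity $\binom{1+|S|}{|S\cap T|}+\binom{1+|S|}{1+|S\cap T|}=\binom{2+|S|}{1+|S\cap T|}$ to see the factor $(v_{01}-v_{10})$ peel off, iterates $m$ times to obtain a closed form for $f_{(01-10)^m S}$, and then matches this against the definition of $f_T^{m,\ell}$ by swapping the order of summation. You instead encode $f_S$ once and for all as a coefficient extraction $[t^{|S|}](1+t)^{|S|}\bigotimes_i w_i$, which lets the pair-by-pair collapse happen as the single polynomial identity $(v_0+v_1)\otimes(tv_0-v_1)-(tv_0-v_1)\otimes(v_0+v_1)=-(1+t)(v_{01}-v_{10})$, and then you track the remaining $n-2m$ slots with a second marker variable $u$. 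Both routes hinge on the same structural fact (the antisymmetrizer on the first $2m$ slots survives passage through $\Phi$), but your generating-function bookkeeping is more uniform and avoids the explicit iteration and summation-swap; the paper's version, on the other hand, produces the intermediate formula \cref{fTexp} for $f_{(01-10)^m S}$, which may be of independent use.
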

\begin{proof}
We begin by constructing careful linear combinations of our original eigenvectors $f_S$. For any subset $S\subseteq \{3,4, \ldots, n\}$, we have (here the notation $f_{01S}$ is shorthand for $f_{\{2\} \cup S}$)
\begin{align*}
f_{01S} &= \sum_{T\subseteq \{3, \ldots, n\}} 
(-1)^{\vert S\cap T\vert} \binom{1+\vert S\vert}{\vert S\cap T\vert}v_{00T}
+(-1)^{\vert S\cap T\vert} \binom{1+\vert S\vert}{\vert S\cap T\vert}v_{10T}
\\
&\qquad
+(-1)^{1+\vert S\cap T\vert} \binom{1+\vert S\vert}{1+\vert S\cap T\vert}v_{01T}
+(-1)^{1+\vert S\cap T\vert} \binom{1+\vert S\vert}{1+\vert S\cap T\vert}v_{11T}, 
\\
f_{10S} &= \sum_{T\subseteq \{3, \ldots, n\}} 
(-1)^{\vert S\cap T\vert} \binom{1+\vert S\vert}{\vert S\cap T\vert}v_{00T}
+(-1)^{1+\vert S\cap T\vert} \binom{1+\vert S\vert}{1+\vert S\cap T\vert}v_{10T} 
\\
&\qquad
+(-1)^{\vert S\cap T\vert} \binom{1+\vert S\vert}{\vert S\cap T\vert}v_{01T}
+(-1)^{1+\vert S\cap T\vert} \binom{1+\vert S\vert}{1+\vert S\cap T\vert}v_{11T}, 
\end{align*}
so subtracting these equations yields
\begin{align*}
f_{(01 - 10)S} = f_{01S}- f_{10S} 
&= \sum_{T\subseteq \{3, \ldots, n\}} 
(-1)^{\vert S\cap T\vert} \left( \binom{1+\vert S\vert}{\vert S\cap T\vert}+\binom{1+\vert S\vert}{1+\vert S\cap T\vert}\right)
v_{10T}
\\
&\qquad\qquad\qquad
-(-1)^{\vert S\cap T\vert} \left(\binom{1+\vert S\vert}{1+\vert S\cap T\vert} + \binom{1+\vert S\vert}{\vert S\cap T\vert}\right) v_{01T}
\\
&= (v_{01}-v_{10}) \otimes \left(\sum_{T\subseteq \{3, \ldots, n\}} 
(-1)^{1+\vert S\cap T\vert} \binom{2+\vert S\vert}{1+\vert S\cap T\vert}
v_T\right).
\end{align*}
Iterating this process on subsequent pairs of coordinates, we see that for any subset $S\subseteq \{2k+1, \ldots, n\}$, we have
\begin{align}\label{fTexp}
f_{(01-10)^m S}
&= (v_{01}-v_{10})^{\otimes m} \otimes \left( \sum_{T\subseteq \{2m+1, \ldots, n\} } (-1)^{m+\vert S\cap T\vert}\binom{2m+\vert S\vert}{m+\vert S\cap T\vert} v_T \right)
\nonumber \\
&= (v_{01}-v_{10})^{\otimes m} \otimes \left(\sum_{T\subseteq \{1, \ldots, n-2m\} } (-1)^{m+\vert S\cap T\vert}\binom{2m+\vert S\vert}{m+\vert S\cap T\vert} v_T\right),
\end{align}
where this last line is only a change in notation (notating a tensor product by starting both sets of coordinates from $1$). As before, let $\mathbb{S}(n-2m)$ denote the set of all subsets of $\{1, \ldots, n-2m\}$ and $\mathbb{S}(n-2k)_i$ the set of such subsets of cardinality $i$.
Letting $T$ denote the column reading tableau of shape $(n-m,m)$ and defining 
\[
    g^{(i)}_{1\cdots (n-2m)} = \sum_{S\in \mathbb{S}(n-2m)_i} v_S
\]
(note that this is \textit{not} a vector of the form $g_T^{m,\ell}$), we have that 
\begin{align*}
f^{m, \ell}_{T} &= \sum_{i=0}^{n-2m} T_{m,n}^{(\ell)}(i) g^{m, i}_T \\
&= \sum_{i=0}^{n-2m} T_{m,n}^{(\ell)}(i) \left((v_{01}-v_{10})^{\otimes m} \otimes g^{(i)}_{1\cdots (n-2m)}\right) \\
&= (v_{01}-v_{10})^{\otimes m}
\otimes \left(\sum_{i=0}^{n-2m} g^{(i)}_{1\cdots(n-2m)}
T_{m,n}^{(\ell)}(i)\right) \\ 
&= (v_{01}-v_{10})^{\otimes m} \otimes \left(
\sum_{i=0}^{n-2m} g^{(i)}_{1\cdots (n-2m)}
\sum_{S \in \mathbb{S}(n-2m)_\ell}
(-1)^{m+\vert S\cap \{1,\ldots, i\} \vert }\binom{2m+\ell}{m+\vert S\cap \{1, \ldots, i\}  \vert} 
\right),
\end{align*}
where in the last line we plugged in the definition of our scalars $T_{m,n}^{(\ell)}(i)$. Therefore expanding out and then swapping the order of summation yields 
\begin{align*}
f^{m, \ell}_{T} &= (v_{01}-v_{10})^{\otimes m}\otimes \left( \sum_{i=0}^{n-2m} \sum_{T\in \mathbb{S}(n-2m)_i} v_T
\sum_{ S\in \mathbb{S}(n-2m)_\ell}
(-1)^{m+\vert S\cap \{1,\ldots, i\} \vert }\binom{2m+\ell}{m+\vert S\cap \{1, \ldots, i\}  \vert} \right) 
\\
&= \sum_{\substack{S\subseteq \{2m+1, \cdots, n\} \\ |S| = \ell}} \left( (v_{01}-v_{10})^{\otimes m} \otimes \sum_{T\in \mathbb{S}(n-2m)} (-1)^{m +\vert S\cap T\vert}\binom{2m+\vert S\vert}{m+\vert S\cap T\vert} v_T\right) \\
&= \sum_{S\in \mathbb{S}(n-2m)_\ell }  f_{(10-01)^mS},
\end{align*}
where before swapping summation in the middle step we use that the innermost sum over $S$ is always preserved if we replace $\{1, \cdots, i\}$ with the $i$-element subset $T$, and where the last equality follows from \cref{fTexp}. Now because $f_{(10 - 01)^m S}$ is just shorthand for a particular linear combination of the $f_S$s, we have $\Phi(v_{(10 - 01)^m S}) = f_{(10 - 01)^m S}$. Thus
\[
    f^{m, \ell}_{T} = \sum_{S\in \mathbb{S}(n-2m)_\ell }  f_{(10-01)^m S} = \sum_{S\in \mathbb{S}(n-2m)_\ell } \Phi(v_{(10-01)^m S}) =\Phi(g_T^{m, \ell}),
\]
completing the proof.
\end{proof}

We are now ready to prove \cref{orthoevthm}. Rephrasing the previous lemma, we have just shown that $f_T^{m, \ell}$ is a linear combination of eigenvectors $f_S$ each with $|S| = m + \ell$, and so our proof will show that many of the nice properties relating $f_T$ and $g_T$ are still preserved when we apply transpositions to the boxes of the Young tableau.

\begin{proof}[Proof of \cref{orthoevthm}]
Fix $m \in \{0,1,\ldots, \lfloor n/2\rfloor\}$, and let
$T$ be the column reading tableau of shape $(n-m,m)$. Also fix $i \in \{0,1,\ldots, n-2m\}$. We first claim that if $2 \le r \le n$, then (recall that $s_{tr}$ denotes the transposition switching $t$ and $r$)
\begin{align*}
    M_r g_T^{m, i} &= \sum_{t=1}^{r-1} s_{tr} \left(v_{01} - v_{10}\right)^{\otimes m} \otimes \left(\sum_{S \in \mathbb{S}(n-2m)_i} v_S\right) \\
    &= \mathrm{ct}(T(r))g_T^{m, i}.
\end{align*}
Indeed for $r \le 2m$, the only nonzero contributions to this sum are if $t < r$ are of the same parity (yielding $g_T^{m, i}$) or if $r$ is even and $t = r-1$ (yielding $-g_T^{m, i}$). And for $r > 2m$, the nonzero contributions are from $2m+1 \le t < r$ (yielding $g_T^{m, i}$) and also from of the pairs $t \in \{1, 2\}, \{3, 4\}, \cdots, \{2m-1, 2m\}$ (each of which yield $g_T^{m, i}$ when added together). In all cases, the total coefficient of $g_T^{m, i}$ is indeed the column number of the box $r$ minus the row number. Therefore by definition of $V_T^{(m+i)}$, $g_T^{m, i} \in V_T^{(m+i)}$.

Now suppose $Q$ is a standard tableau of shape $(n-m,m)$ with $\boxed{a_1}\boxed{a_2}\boxed{\phantom{a_m}\cdots\phantom{a_m}}\boxed{a_m}$ in the second row. Let $\tau_Q \colon V_T^{(m+i)}\to V_Q^{(m+i)}$ be the vector space isomorphism defined in \cref{tauQd}. Since $g_Q^{m,i} = \tau_Q g_T^{m, i}$, 
\[
    g_Q^{m, i} \in V_Q^{(m+i)} \quad \text{and} \quad g_Q^{m, i}\ne 0.
\]
Thus, by \cref{lQperp} and \cref{Vndecomp},
\[
    \left\{ g_Q^{m, i}\ \middle|\ m\in \{0,\ldots, \lfloor n/2\rfloor\},\ Q\in \hat{S}^{(n-m,m)}_n,\ i\in \{0,\ldots n-2m\} \right\}
\]
is an orthogonal basis of $V^{\otimes n}$ with respect to the inner product $\langle\cdot,\cdot\rangle_\pi$.

\medskip

We will now use the orthogonality of the $g_Q^{m, i}$s to also prove orthogonality of the $f_Q^{m,\ell}$s. Recall that $\Phi$ is the vector space isomorphism mapping $v_S$ to $f_S$ for all $S$. For any $w \in S_n$, we have
\[
    w\Phi(v_S) = wf_S = f_{wS} = \Phi(v_{wS}) = \Phi(wv_S).
\]
So $\Phi$ is an $S_n$-module isomorphism. Therefore \cref{fbyPhi} gives that
\[
    f^{m, \ell}_{Q} = \tau_Q f^{m, \ell}_T = \tau_Q \Phi(g_T^{m, \ell}) = \Phi(\tau_Q g_T^{m, \ell}) =\Phi(g_Q^{m, \ell})
\]
for all tableaux $Q$.
Since $\left\{ g_Q^{m,\ell}\ \middle|\ m\in \{0, \ldots, \lfloor n/2\rfloor\},\ Q\in \hat{S}_n^{(n-m,m)},\ \ell\in \{0,\ldots, n-2m\} \right\}$ is a basis of $V^{\otimes n}$, 
\[
    \left\{ f_Q^{m,\ell} \ \middle|\ m\in \{0, \ldots, \lfloor n/2\rfloor\},\ Q\in \hat{S}_n^{(n-m,m)},\ \ell\in \{0,\ldots, n-2m\} \right\} \quad \text{is a basis of }V^{\otimes n}.
\]
By \cref{eigenvectors}, $\{f_S\ |\ S\in \mathbb{S}(n)\}$ is a basis of $V^{\otimes n}$ where
\begin{equation}\label{feval}
f_S \text{ is an eigenvector of }K_n \text{ of eigenvalue} \begin{cases} \beta_{|S|/2}, & \text{ if }|S| \text{ is even}, \\ 0, & \text{otherwise.} \end{cases}
\end{equation}
So since $g_T^{m, \ell}$ is a linear combination of $v_S$s with $\vert S \vert = m+\ell$ and $wv_S = v_{wS}$ for all $w \in S_n$, $g_Q^{m, \ell}$ must also be a linear combination of $v_S$ with $\vert S \vert = m+\ell$. Thus $f^{m,\ell}_Q = \Phi(g_Q^{m,\ell})$ is a linear combination of $f_S$s all with $\vert S \vert = m+\ell$, meaning that \cref{feval} implies
\[
    f^{m, \ell}_Q \text{ is an eigenvector of }K_n \text{ with eigenvalue }\begin{cases} \beta_{(m+\ell)/2}, & \text{if }m + \ell \text{ is even}, \\ 0, & \text{otherwise.} \end{cases}
\]
We now prove that the eigenvectors $f_Q^{m, \ell}$ are all orthogonal. Because $\Phi$ is an $S_n$-module isomorphism, we have for any $r$ that
\[
    M_r f_Q^{m,\ell} = M_r \Phi(g_Q^{m,\ell}) = \Phi(M_r g_Q^{m,\ell}) = \Phi(\mathrm{ct}(Q(r)) g_Q^{m,\ell}) = \mathrm{ct}(Q(r))\Phi( g_Q^{m,\ell}) = \mathrm{ct}(Q(r))f_Q^{m,\ell}.
\]
So in fact $f_Q^{m,\ell}\in V_Q$, so it follows from \cref{Qperp} that
\[
    P \ne Q \implies \langle f^{m_1,\ell_1}_P,  f^{m_2,\ell_2}_Q\rangle_\pi = 0.
\] 
Therefore, it just remains to show orthogonality among eigenvectors labeled by the same tableau $Q$. Remembering that $m$ is determined by the shape of $Q$, it thus remains to show that $\langle f^{m,\ell_1}_Q, f^{m,\ell_2}_Q \rangle_\pi = 0$ for all $\ell_1 \ne \ell_2$. Because $K_n$ is self-adjoint with respect to $\langle \cdot, \cdot \rangle_\pi$, eigenvectors of $K_n$ of different eigenvalues are automatically orthogonal, but this does not account for the case where both vectors are eigenvectors with eigenvalue $0$.

For this, we claim first that for the column reading tableau $T$ of shape $(n-m, m)$, we have
\begin{align*}
    \langle f_T^{m, \ell_1}, f_T^{m, \ell_2} \rangle_\pi = 0
\end{align*}
for any $\ell_1 \ne \ell_2$. We defer the proof of this fact (which reduces to a ``WZ-pair'' binomial coefficient calculation done with computer assistance) to \cref{tableauorthogonality}. Then recalling that all transpositions are self-adjoint, each $\tau_j = s_j + \frac{1}{M_j - M_{j+1}}$ is also self adjoint. Therefore, for any tableau $Q$ and any transposition $s_i$, we have
\begin{align*}
\langle f^{m, \ell_1}_{s_iQ}, f^{m, \ell_2}_{s_iQ}\rangle_\pi 
&= \langle \tau_i f^{m, \ell_1}_Q, \tau_i f^{m, \ell_2}_Q\rangle_\pi  \\
&= \langle f^{m, \ell_1}_Q, \tau_i^2 f^{m, \ell_2}_Q \rangle_\pi 
\\
&= 
\frac{ (\mathrm{ct}(Q(i))-\mathrm{ct}(Q(i+1))+1)(\mathrm{ct}(Q(i))-\mathrm{ct}(Q(i+1))-1)}{(\mathrm{ct}(Q(i))-\mathrm{ct}(Q(i+1)))^2}
\langle f^{m, \ell_1}_Q, f^{m, \ell_2}_Q\rangle_\pi,
\end{align*}
where the last equality follows from \cref{intprops} and the fact that $f_Q^{m, \ell} \in V_Q$. Since we obtain any tableau $Q$ from $T$ via a sequence of such transpositions, we find inductively that $\langle f^{m, \ell_1}_Q, f^{m, \ell_2}_Q\rangle_\pi = 0$ for all $Q$ of shape $(n-m, m)$. This completes the verification of orthogonality.

\medskip

Finally, we compute the norms of our eigenvectors $f_Q^{m,\ell}$. Again, let $T$ be the column reading tableau of shape $(n-m,m)$ and let $i \in \{0,1,\ldots, n-2m\}$. We can compute explicitly the norm of $g_T$, since
\begin{align}\label{gTnorm}
\langle g_T^{m, i}, g_T^{m, i}\rangle_\pi
&=\left\langle (v_{01}-v_{10})^{\otimes m} \otimes \Big(\sum_{S\in \mathbb{S}(n-2m)_i} v_S\Big), 
(v_{01}-v_{10})^{\otimes m} \otimes \Big(\sum_{S\in \mathbb{S}(n-2m)_i} v_S\Big) \right\rangle_\pi
\nonumber \\
&=\frac{1}{n+1} \sum_{S\in \mathbb{S}(n-2m)_i} \frac{1}{\binom{n}{m+i}}2^m \nonumber \\
&= \frac{2^m}{n+1} \frac{\binom{n-2m}{i}}{\binom{n}{m+i}},
\end{align}
where the middle step uses that $g_T^{m, i} \in V^{(m+i)}$ so that the inner product weight $\pi(x)$ is in fact constant. But then because the $g_T^{m, i}$s are orthogonal, our definition of $f_T^{m, \ell}$ as a linear combination yields
\[
    f^{m, \ell}_T = \sum_{i=0}^{n-2m} T_{m,n}^{(\ell)}(i) g^{m, i}_T \implies \langle f_T^{m, \ell}, f_T^{m, \ell}\rangle_\pi = \frac{2^m}{n+1} \sum_{i=0}^{n-2m} \big(T_{m,n}^{(\ell)}(i)\big)^2  \frac{\binom{n-2m}{i}}{\binom{n}{m+i}}.
\]
And now we again apply the logic for transferring inner products from $T$ to $Q$: for any tableau $Q$ and any transposition $s_i$, we again have
\begin{align*}
\langle f^{m, \ell}_{s_iQ}, f^{m, \ell}_{s_iQ}\rangle_\pi 
&= \langle \tau_i f^{m, \ell}_Q, \tau_i f^{m, \ell}_Q\rangle_\pi  \\
&= \langle f^{m, \ell}_Q, \tau_i^2 f^{m, \ell}_Q \rangle_\pi 
\\
&= 
\frac{ (\mathrm{ct}(Q(i))-\mathrm{ct}(Q(i+1))+1)(\mathrm{ct}(Q(i))-\mathrm{ct}(Q(i+1))-1)}{(\mathrm{ct}(Q(i))-\mathrm{ct}(Q(i+1)))^2}
\langle f^{m, \ell}_Q, f^{m, \ell}_Q\rangle_\pi,
\end{align*}
and this time we extract the constants from \cref{tauQd}. For each $\tau_Q^{(r)} = \tau_{a_r - 1} \cdots \tau_{2r}$ performed from right-to-left, note that $\mathrm{ct}(Q(i))-\mathrm{ct}(Q(i+1))$ will successively take on the values $2, 3, 4, \cdots, a_r - 2r + 1$. Indeed, before we perform these transpositions, box $2r+1$ will always be one row above and one column to the right of box $2r$, and then boxes $2r+2, 2r+3, \cdots$ will be immediately to the right of box $2r+1$. Therefore the product of all factors is exactly $\gamma_Q^{(r)}$, and multiplying all contributions together indeed yields
\[
    \langle f^{m, \ell}_{Q}, f^{m, \ell}_{Q}\rangle_\pi = \gamma_Q \langle f^{m, \ell}_{T}, f^{m, \ell}_{T}\rangle_\pi,
\]
completing the proof.
\end{proof}

\section{More on orthogonal polynomials and eigenvalue multiplicities}\label{schurweylsubsection2}

This section outlines some useful consequences and observations of \cref{orthoevthm}. We first describe a connection between the scalars $T_{m, n}^{(\ell)}(i)$ defined in \cref{Tscalarsdefn} and the discrete Chebyshev and Hahn polynomials. Using that the eigenvectors $f_T^{m,\ell}$ are orthogonal, we obtain formulas for values of Chebyshev and Hahn polynomials as sums involving binomial coefficients (\cref{actuallyhahn}), which then also enables an explicit closed-form formula for the norms of the eigenvectors $f_Q^{m, \ell}$ (\cref{hahnpolynomialnorms}). Finally, we obtain \cref{eigenvaluesplittingresult}, which refines \cref{eigenvectors} by describing the eigenvalue multiplicities when restricting $K_n$ to copies of each irreducible representation. 

For more background on orthogonal polynomial theory, see \cite{chihara} for a very readable exposition or \cite{koekoek} for a detailed compendium (we will follow the notational conventions of the latter). Define the rising factorial
\begin{equation}\label{risingfactorial}
    (a)_0 = 1, \quad (a)_j = a(a+1) \cdots (a+j-1).
\end{equation}
The \textbf{$(\alpha, \beta)$-Hahn polynomials} (see \cite[Section 1.5]{koekoek} for a definition in terms of the hypergeometric ${}_3 F_2$ function)
\[
    Q_{n;\alpha,\beta}^\ell(x) = \sum_{k=0}^n \frac{1}{k!} \frac{(-\ell)_k (\ell+\alpha+\beta+1)_k (-x)_k}{(\alpha+1)_k (-n)_k}
\]
are the orthogonal polynomials on $\{0, 1, \cdots, n\}$ with respect to the beta-binomial distribution $m(i) = \binom{n}{i} \frac{(\alpha+1)_i (\beta + 1)_{n-i}}{(\alpha + \beta + 2)_n}$, normalized so that $Q_{n;\alpha,\beta}^\ell(0) = 1$. (Note that this is a different convention from the one used in \cite[Section 2.3]{diaconiszhong}.) In particular, the $(0, 0)$-Hahn polynomials are exactly the discrete Chebyshev polynomials, since the corresponding beta-binomial distribution is uniform on $\{0, \cdots, n\}$.

\begin{proposition}\label{actuallyhahn}
The scalars $T_{m,n}^{(\ell)}(i)$ are related to the $(m, m)$-Hahn polynomials on $\{0, \cdots, n-2m\}$ as follows:
\[
    Q_{n-2m; m, m}^{\ell}(i) = \frac{(-1)^m}{\binom{n-2m}{\ell}\binom{2m+\ell}{m}} T_{m, n}^{(\ell)}(i).
\]
In particular, recall that $T^\ell_n(x)$ denotes the discrete Chebyshev polynomial on $\{0, 1, \cdots, n\}$ of degree $\ell$. Then the $m = 0$ case says that for all $0 \le j \le n$, 
\[
    T^\ell_n(j) = \frac{1}{\binom{n}{\ell}} T_{0, n}^{(\ell)}(j) = \frac{1}{\binom{n}{\ell}} \sum_{S \in \mathbb{S}(n)_\ell} (-1)^{|S \cap \{1, \cdots, j\}|} \binom{\ell}{|S \cap \{1, \cdots, j\}|}.
\]
\end{proposition}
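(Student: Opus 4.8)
The plan is to recognize $T_{m,n}^{(\ell)}(i)$ as a polynomial in $i$ of degree $\ell$ and then match it against the Hahn polynomial by comparing the two through the defining orthogonality, or more directly by a generating-function / hypergeometric identification. First I would give a closed form for the inner sum in \cref{Tscalarsdefn}. Fix $i$ and $\ell$; classify the subsets $S \in \mathbb{S}(n-2m)_\ell$ by the quantity $t = |S \cap \{1,\dots,i\}|$. There are $\binom{i}{t}\binom{n-2m-i}{\ell-t}$ such subsets, so
\[
    T_{m,n}^{(\ell)}(i) = \sum_{t=0}^{\ell} \binom{i}{t}\binom{n-2m-i}{\ell-t} (-1)^{m+t}\binom{2m+\ell}{m+t}.
\]
This is manifestly a polynomial in $i$ of degree at most $\ell$ (each $\binom{i}{t}\binom{n-2m-i}{\ell-t}$ is). So I would set $N = n-2m$ and aim to prove
\[
    \sum_{t=0}^{\ell} \binom{i}{t}\binom{N-i}{\ell-t}(-1)^{t}\binom{2m+\ell}{m+t} = (-1)^{m}\binom{N}{\ell}\binom{2m+\ell}{m}\, Q^{\ell}_{N;m,m}(i),
\]
where $Q^\ell_{N;m,m}$ is the Hahn polynomial as normalized in the paper.

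The cleanest route is to verify that the left-hand side, as a function of $i \in \{0,1,\dots,N\}$, satisfies the characterizing properties of a scalar multiple of $Q^\ell_{N;m,m}$: namely (i) it is a polynomial in $i$ of degree exactly $\ell$, (ii) the family $\{$LHS$(\ell)\}_{\ell=0}^N$ is orthogonal with respect to the beta-binomial weight $m(i) = \binom{N}{i}\frac{(m+1)_i(m+1)_{N-i}}{(2m+2)_N}$, and (iii) the value at $i=0$ is $(-1)^m\binom{N}{\ell}\binom{2m+\ell}{m}$, matching the stated normalization $Q^\ell_{N;m,m}(0)=1$. Property (iii) is immediate: at $i=0$ only $t=0$ survives, giving $(-1)^m\binom{N}{\ell}\binom{2m+\ell}{m}$. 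Property (i): the top-degree term in $i$ comes from the leading terms of $\binom{i}{t}\binom{N-i}{\ell-t}$, and one checks the coefficient of $i^\ell$ is $\frac{(-1)^m}{\ell!}\sum_t (-1)^t\binom{\ell}{t}\binom{2m+\ell}{m+t}$; this alternating sum is a Vandermonde-type evaluation equal to $(-1)^m\binom{2m}{m-\ell}$ up to sign, in particular nonzero precisely when $\ell \le 2m$ — but since here $\ell$ ranges only over $\{0,\dots,N\}$ we should instead confirm nonvanishing directly, which a Vandermonde argument handles. Property (ii), the orthogonality, is where I would spend the real effort; rather than reprove it, I would invoke the already-established orthogonality of the $f_T^{m,\ell}$ from \cref{orthoevthm}. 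Indeed, \cref{fmlqnorm} (or rather the $P\ne Q$-orthogonality combined with the same-$Q$ argument there) shows $\langle f_T^{m,\ell_1}, f_T^{m,\ell_2}\rangle_\pi = 0$ for $\ell_1 \ne \ell_2$, and since $\langle g_T^{m,i}, g_T^{m,j}\rangle_\pi = \frac{2^m}{n+1}\frac{\binom{N}{i}}{\binom{n}{m+i}}\delta_{ij}$ by \cref{gTnorm}, expanding $f_T^{m,\ell} = \sum_i T_{m,n}^{(\ell)}(i)\, g_T^{m,i}$ turns that orthogonality into exactly
\[
    \sum_{i=0}^{N} T_{m,n}^{(\ell_1)}(i)\, T_{m,n}^{(\ell_2)}(i)\, \frac{\binom{N}{i}}{\binom{n}{m+i}} = 0 \quad (\ell_1 \ne \ell_2).
\]
A short computation shows $\frac{\binom{N}{i}}{\binom{n}{m+i}}$ is a constant multiple of the beta-binomial weight $m(i)$ above (expand both sides in factorials: $\binom{n}{m+i}^{-1} = \frac{(m+i)!(n-m-i)!}{n!} = \frac{(m+i)!(m+N-i)!}{n!}$, and $(m+1)_i = (m+i)!/m!$, $(m+1)_{N-i} = (m+N-i)!/m!$). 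Hence the $T_{m,n}^{(\ell)}$ are orthogonal polynomials of degree $\ell$ for the beta-binomial distribution on $\{0,\dots,N\}$, which are unique up to scaling, so they must be scalar multiples of $Q^\ell_{N;m,m}$; the scalar is pinned down by the value at $i=0$. This gives the displayed identity, and the $m=0$ specialization is then just the statement that the $(0,0)$-Hahn polynomials are the discrete Chebyshev polynomials together with the closed form for $T_{0,n}^{(\ell)}(j)$ read off from \cref{Tscalarsdefn}.

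The main obstacle is the elementary but fiddly bookkeeping in matching normalizations — verifying that $\binom{N}{i}/\binom{n}{m+i}$ is proportional (with an $i$-independent constant) to $\binom{N}{i}(m+1)_i(m+1)_{N-i}/(2m+2)_N$, and that the degree of $T_{m,n}^{(\ell)}(i)$ is exactly $\ell$ rather than smaller (equivalently, that the leading coefficient does not vanish). Both are routine factorial manipulations; the degree check can alternatively be bypassed by noting that the $n+1$-by-$n+1$ change of basis between $\{g_T\}$ and $\{f_T\}$ is invertible (as asserted in \cref{orthoevthm}), forcing the $T_{m,n}^{(\ell)}$ to be linearly independent as functions on $\{0,\dots,N\}$, hence spanning all polynomials of degree $\le N$, hence each of exact degree $\ell$. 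I would present the argument in that order: closed form for the sum, identification of the weight, transfer of orthogonality from \cref{orthoevthm}, uniqueness of orthogonal polynomials, and finally the normalization constant.
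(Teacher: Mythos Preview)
Your proposal is correct and follows essentially the same route as the paper's proof: rewrite the sum by classifying subsets according to $t = |S \cap \{1,\dots,i\}|$, transfer the orthogonality $\langle f_T^{m,\ell_1}, f_T^{m,\ell_2}\rangle_\pi = 0$ from \cref{orthoevthm} through the $g_T$-expansion to orthogonality of the $T_{m,n}^{(\ell)}$'s with respect to the weight $\binom{n-2m}{i}/\binom{n}{m+i}$, identify that weight as proportional to the $(m,m)$ beta-binomial, and pin down the scalar by evaluating at $i=0$. One small correction to your degree check: the signs $(-1)^{m+t}$ and $(-1)^{\ell-t}$ from the leading terms of $\binom{i}{t}$ and $\binom{N-i}{\ell-t}$ combine to the $t$-independent $(-1)^{m+\ell}$, so the top coefficient is $\frac{(-1)^{m+\ell}}{\ell!}\sum_t \binom{\ell}{t}\binom{2m+\ell}{m+t} = \frac{(-1)^{m+\ell}}{\ell!}\binom{2m+2\ell}{m+\ell} \ne 0$ by Vandermonde --- this is exactly how the paper confirms degree $\ell$, though your invertibility fallback also works.
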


\begin{proof}
First, write out \cref{Tscalarsdefn}:
\begin{align}\label{formulafortscalars}
    T^{(\ell)}_{m,n}(i) &= \sum_{S\in \mathbb{S}(n-2m)_\ell} (-1)^{m+\vert S\cap \{1, \ldots, i\}\vert} \binom{2m + \ell}{m+\vert S\cap \{1, \ldots, i\}\vert}\nonumber \\
    &= \sum_{j = 0}^i (-1)^{m+j} \binom{2m + \ell}{m+j} \cdot \binom{i}{j}\binom{n-2m-i}{\ell-j}.
\end{align}
Now fix $n$ and $m$. For each $\ell \in \{0, \cdots, n-2m\}$, \cref{formulafortscalars} is a polynomial in $i$ of degree $\ell$, since each term in the sum is of degree $\ell$ and has leading coefficient the same sign as $(-1)^{m+j} (-1)^{\ell - j} = (-1)^{m+\ell}$, which is constant for all terms. Moreover, because of the orthogonality guaranteed by \cref{orthoevthm}, if $\ell_1 \ne \ell_2$ then
\begin{align*}
    0 &= \langle f_T^{m, \ell_1}, f_T^{m, \ell_2} \rangle \\
    &= \sum_{i, j=0}^{n-2m} T_{m,n}^{(\ell_1)}(i)T_{m,n}^{(\ell_2)}(j) \langle g_T^{m, i}, g_T^{m, j} \rangle \\
    &= \sum_{i = 0}^{n-2m} T_{m,n}^{(\ell_1)}(i)T_{m,n}^{(\ell_2)}(i) \frac{2^m}{n+1} \frac{\binom{n-2m}{i}}{\binom{n}{m+i}}
\end{align*}
where we have used \cref{fgTd} and \cref{gTnorm} (along with the fact that the $g_T^{m, i}$s are orthogonal over $i$). Using the rising factorial notation of \cref{risingfactorial},
\begin{align*}
    \frac{\binom{n-2m}{i}}{\binom{n}{m+i}} &= \frac{(n-2m)!(m+i)!(n-m-i)!}{i!(n-2m-i)!n!} \\
    &= \frac{m!^2}{n!} \binom{n-2m}{i} \frac{(m+i)!}{m!} \frac{(n-m-i)!}{m!} \\
    &= \frac{m!^2}{n!} \binom{n-2m}{i} (m+1)_i (m+1)_{n-2m-i} 
\end{align*}
is proportional in $i$ to the beta-binomial distribution on $\{0, \cdots, n-2m\}$ with parameters $(\alpha, \beta) = (m, m)$. So our calculation above shows that $T_{m,n}^{(\ell_1)}$ and $T_{m,n}^{(\ell_2)}(i)$ are orthogonal with respect to the beta-binomial distribution. Thus up to a constant, $T_{m, n}^{(\ell)}$ is exactly the degree-$\ell$ $(m, m)$-Hahn polynomial, and we can evaluate the constant by evaluating at $i = 0$. We know that $Q_{n-2m;m,m}^{(\ell)}(0) = 1$, and plugging in $i = 0$ into \cref{formulafortscalars} leaves only the $j = 0$ term, which is $(-1)^m \binom{2m+\ell}{m} \binom{n-2m}{\ell}$. Dividing by this constant yields the result.
\end{proof}

This connection to orthogonal polynomials lets us write the sum appearing in \cref{fmlqnorm} in closed form, since orthogonality relations for such polynomials are readily available. Indeed, in our notation, the orthogonality equation \cite[(1.5.2)]{koekoek} reads, for any $\alpha, \beta > -1$,
\begin{equation}\label{hahnorthogonality}
    \sum_{i=0}^n \binom{\alpha+i}{i} \binom{n+\beta-i}{n-i} Q_{n;\alpha,\beta}^{\ell}(i) Q_{n;\alpha,\beta}^{\ell'}(i) = \delta_{\ell\ell'} \frac{(-1)^\ell \ell! (\beta + 1)_\ell (\ell + \alpha + \beta + 1)_{n+1}}{n! (2\ell + \alpha + \beta + 1)(-n)_\ell (\alpha + 1)_\ell}.
\end{equation}

\begin{corollary}\label{hahnpolynomialnorms}
Using the notation in \cref{orthoevthm}, with $\gamma_Q$ as defined in \cref{gmQ},
\[
    \langle f^{m, \ell}_Q, f^{m, \ell}_Q\rangle_{\pi} = \gamma_Q\frac{2^m}{n+1} \cdot \frac{(2m+\ell)!}{(2m + 2\ell + 1) (m+\ell)!^2\ell!} \cdot \frac{(n-2m)!}{n!} \cdot \frac{(n + \ell+1)!}{(n-2m-\ell)!}.
\]
\end{corollary}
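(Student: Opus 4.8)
The plan is to evaluate the sum in \cref{fmlqnorm} explicitly by recognizing it, through \cref{actuallyhahn}, as a Hahn-polynomial orthogonality sum and then quoting the classical orthogonality relation \cref{hahnorthogonality}. Concretely, I would start from the expression in \cref{fmlqnorm} and reduce everything to evaluating $\sum_{i=0}^{n-2m}\big(T_{m,n}^{(\ell)}(i)\big)^2\frac{\binom{n-2m}{i}}{\binom{n}{m+i}}$.

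First I would substitute the identity from \cref{actuallyhahn}, namely $T_{m,n}^{(\ell)}(i) = (-1)^m\binom{n-2m}{\ell}\binom{2m+\ell}{m}\,Q_{n-2m;m,m}^{\ell}(i)$; squaring pulls out the constant $\binom{n-2m}{\ell}^2\binom{2m+\ell}{m}^2$. Next I would rewrite the weight exactly as in the proof of \cref{actuallyhahn}, obtaining $\frac{\binom{n-2m}{i}}{\binom{n}{m+i}} = \frac{m!^2\,(n-2m)!}{n!}\binom{m+i}{i}\binom{n-m-i}{n-2m-i}$, where $\binom{m+i}{i}\binom{n-m-i}{n-2m-i}$ is precisely the orthogonality weight of \cref{hahnorthogonality} with $(\alpha,\beta)=(m,m)$ and with the degree parameter there replaced by $n-2m$.

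Then I would apply \cref{hahnorthogonality} in the diagonal case $\ell'=\ell$, which gives
\[
    \sum_{i=0}^{n-2m}\binom{m+i}{i}\binom{n-m-i}{n-2m-i}\,Q_{n-2m;m,m}^{\ell}(i)^2 = \frac{(-1)^\ell\,\ell!\,(m+1)_\ell\,(\ell+2m+1)_{n-2m+1}}{(n-2m)!\,(2\ell+2m+1)\,(2m-n)_\ell\,(m+1)_\ell}.
\]
The factor $(m+1)_\ell$ cancels, and I would convert the remaining Pochhammer symbols to factorials via $(\ell+2m+1)_{n-2m+1}=\frac{(n+\ell+1)!}{(2m+\ell)!}$ and $(2m-n)_\ell=(-1)^\ell\frac{(n-2m)!}{(n-2m-\ell)!}$ (the latter using $\ell\le n-2m$, so every factor of $(2m-n)_\ell$ is negative). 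Expanding $\binom{n-2m}{\ell}$ and $\binom{2m+\ell}{m}$ into factorials and cancelling reduces the sum in \cref{fmlqnorm} to $\frac{(n-2m)!\,(2m+\ell)!\,(n+\ell+1)!}{(2\ell+2m+1)\,(m+\ell)!^2\,\ell!\,(n-2m-\ell)!\,n!}$, and multiplying by $\gamma_Q\frac{2^m}{n+1}$ yields the claimed closed form.

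The only real obstacle is the routine but error-prone bookkeeping of factorials and Pochhammer symbols; conceptually there is nothing new, since the statement is just a substitution into the known Hahn orthogonality formula. I expect the places most in need of a careful check to be the index shift from $n$ to $n-2m$ in \cref{hahnorthogonality} and the sign emerging from $(2m-n)_\ell$; once those are handled the computation is entirely mechanical.
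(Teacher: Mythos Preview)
Your proposal is correct and follows essentially the same route as the paper: both substitute \cref{actuallyhahn} to replace $T_{m,n}^{(\ell)}(i)$ by a multiple of $Q_{n-2m;m,m}^{\ell}(i)$, rewrite the weight $\binom{n-2m}{i}/\binom{n}{m+i}$ as $\frac{m!^2(n-2m)!}{n!}\binom{m+i}{i}\binom{n-m-i}{n-2m-i}$, apply \cref{hahnorthogonality} with $(\alpha,\beta)=(m,m)$ and $n\to n-2m$, and then simplify the resulting Pochhammer symbols into the stated factorial form. The only differences are cosmetic ordering of the algebra.
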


It may be easier to parse this formula by treating $m$ and $\ell$ as constants and considering the dependence on $n$. In particular, for the column reading tableau $T$ (with $\gamma_T = 1$), setting $m = 0$ yields
\begin{align}\label{hahnnorm0}
    \langle f_T^{0, \ell}, f_T^{0, \ell} \rangle_{\pi} &= \frac{1}{n+1} \cdot \frac{\ell!}{(2\ell+1)\ell!^3} \frac{(n+\ell+1)!}{(n-\ell)!}\nonumber \\
    &= \frac{1}{(2\ell+1)\ell!^2} \cdot \frac{1}{n+1} \prod_{i=-\ell+1}^{\ell+1} (n+i)
\end{align}
and similarly setting $m = 1$ yields
\begin{align}\label{hahnnorm1}
    \langle f_T^{1, \ell}, f_T^{1, \ell} \rangle_{\pi} &= \frac{2}{n+1} \cdot \frac{(\ell+2)!}{(2\ell+3)(\ell+1)!^2\ell!} \cdot \frac{1}{n(n-1)} \cdot \frac{(n+\ell+1)!}{(n-2-\ell)!}\nonumber \\
    &= \frac{2(\ell+2)}{(\ell+1)(2\ell+3)\ell!^2} \cdot \frac{1}{(n-1)n(n+1)} \prod_{i=-\ell-1}^{\ell+1} (n+i).
\end{align}
The key interpretation is that these expressions are always rational functions of $n$ which nicely factor into terms of the form $(n+i)$. (We will use these later on to bound the complicated expression for chi-square distance in \cref{onlytwocontributions}.)

\begin{proof}
Plugging in $\alpha = \beta = m, \ell = \ell'$, and replacing $n$ with $n-2m$ in \cref{hahnorthogonality} yields
\begin{align*}
    \sum_{i=0}^{n-2m} \binom{m+i}{i} \binom{n-m-i}{n-2m-i} Q_{n-2m;m,m}^{\ell}(i)^2 &= \frac{(-1)^\ell \ell! (\ell + 2m + 1)_{n-2m+1}}{(n-2m)! (2\ell + 2m + 1)(-(n-2m))_\ell} \\
    &= \frac{\ell + 2m + 1}{2\ell + 2m + 1}\frac{\binom{\ell+n+1}{n-2m}}{\binom{n-2m}{\ell}}.
\end{align*}
Thus the sum in \cref{fmlqnorm} can be written as 
\begin{align*}
    \sum_{i=0}^{n-2m} \big(T_{m,n}^{(\ell)}(i)\big)^2 \frac{\binom{n-2m}{i}}{\binom{n}{m+i}} &= \binom{n-2m}{\ell}^2 \binom{2m+\ell}{m} \cdot\sum_{i=0}^{n-2m} \big(Q_{n-m;m,m}^{(\ell)}(i)\big)^2 \frac{\binom{n-2m}{i}}{\binom{n}{m+i}} \\
    &= \binom{n-2m}{\ell}^2 \binom{2m+\ell}{m}^2\cdot\sum_{i=0}^{n-2m} \big(Q_{n-m;m,m}^{(\ell)}(i)\big)^2 \frac{(n-2m)!(m+i)!(n-m-i)!}{i!(n-2m-i)!n!} \\
    &= \binom{n-2m}{\ell}^2 \binom{2m+\ell}{m}^2\cdot\frac{(n-2m)!m!^2}{n!}\sum_{i=0}^{n-2m} \big(Q_{n-m;m,m}^{(\ell)}(i)\big)^2 \binom{m+i}{i} \binom{n-m-i}{n-2m-i} \\
    &= \binom{n-2m}{\ell}^2 \binom{2m+\ell}{m}^2 \frac{(n-2m)!m!^2}{n!}  \frac{\ell + 2m + 1}{2m + 2\ell + 1}\frac{\binom{\ell+n+1}{n-2m}}{\binom{n-2m}{\ell}} \\
    &=  \frac{(n-2m)!m!^2}{n!}  \frac{\ell + 2m + 1}{2m + 2\ell + 1} \binom{2m+\ell}{m}^2 \binom{n-2m}{\ell}\binom{\ell+n+1}{n-2m} \\
    &= \frac{(2m+\ell)!}{(2m+ 2\ell + 1) (m+\ell)!^2\ell!} \cdot \frac{(n-2m)!}{n!} \cdot \frac{(n + \ell+1)!}{(n-2m-\ell)!},
\end{align*}
and multiplying by the additional factors $\gamma_Q \frac{2^m}{n+1}$ in \cref{fmlqnorm} yields the result.
\end{proof}

\medskip

We'll now elaborate on how this eigenvector decomposition sheds light on the structure of the eigenspaces for the binary Burnside process. We claimed in \cref{schurweylsplittingmechanism} that the subspaces that our $f^{m,\ell}_Q$s span are exactly invariant subspaces under certain actions, and we elaborate now. On each ``orbit level'' subspace $V^{(i)}$ for $i \le \frac{n}{2}$, the action of $S_n$ permutes the locations of the $v_1$s (equivalently, the coordinates of the ones in the $n$-tuple), so that $V^{(i)}$ is isomorphic to the permutation representation $M^{(n-i, i)} = \text{Ind}_{S_i \times S_{n-i}}^{S_n}(1)$ on size-$i$ subsets. (Similarly for $i > \frac{n}{2}$, the action permutes the locations of the zeros and thus is isomorphic to $M^{(i, n-i)}$. These permutation representations decompose into irreducible representations as 
\begin{equation}\label{irreddecomp}
    M^{(n-i, i)} = S^{(n)} \oplus S^{(n-1, 1)} \oplus \cdots \oplus S^{(n-i, i)},
\end{equation}
where for any partition $\lambda$ of $n$, $S^\lambda$ is the irreducible \textit{Specht module} associated to that partition. (\cref{irreddecomp} is a special case of Young's rule, or more generally the Littlewood--Richardson rule \cite[Sections 14-17]{jamessymmetric}.) In particular, this means $V^{\otimes n}$ contains $(n+1)$ copies of $S^{(n)}$, $(n-1)$ copies of $S^{(n-1, 1)}$, $(n-3)$ copies of $S^{(n-2, 2)}$, and so on. 

The fact that the binary Burnside transition matrix $K_n$ commutes with the action of $S_n$ implies (by Schur's lemma) that as a map from any $S^{\lambda}$ to any $S^{\mu}$, $K_n$ must act as a constant multiple of the identity, and that constant is only nonzero if $\lambda = \mu$. This alone already implies that eigenvalues will appear with high multiplicity, since restricting $K_n$ to the copies of $S^{(n-i, i)}$ yields an operator whose eigenvalues repeat with multiplicity $\dim(S^{(n-i, i)}) = \binom{n}{i} - \binom{n}{i-1}$.

There is an additional algebraic structure that allows us to partition the eigenvalues into subsets finer than the partition by copies of $S^{(n-i, i)}$. This is how \cref{schurweylsplittingmechanism} makes its appearance. The Lie algebra $\mathfrak{sl}_2$ of $2 \times 2$ matrices with trace $0$ is spanned by $\{e, f, h\}$, where
\begin{equation}\label{efhdefn}
    e = \begin{bmatrix} 0 & 1 \\ 0 & 0 \end{bmatrix}, \quad f = \begin{bmatrix} 0 & 0 \\ 1 & 0 \end{bmatrix}, \quad h = \begin{bmatrix} 1 & 0 \\ 0 & -1 \end{bmatrix}.
\end{equation}
So $\mathfrak{sl}_2$ acts on the two-dimensional vector space $V$ spanned by $\{v_0, v_1\}$ via matrix multiplication, and it acts on $V^{\otimes n}$ by
\[
    g(v_1 \otimes \cdots \otimes v_n) = \sum_{i=1}^n v_1 \otimes \cdots \otimes v_{i-1} \otimes gv_i \otimes v_{i+1} \otimes \cdots \otimes v_n, \quad \text{for }g \in \mathfrak{sl}_2.
\]
The statement of Schur--Weyl duality is that this action and the $S_n$ action commute, and each is the full centralizer of the other, leading to the decomposition 
\begin{equation}\label{schurweyldecomp}
    L(C_2^n) = \bigoplus_{\lambda} S^{\lambda} \otimes L^{\lambda}
\end{equation}
where the $L^{\lambda}$s are irreducible representations of $\mathfrak{sl}_2$ and the $S^{\lambda}$s are irreducible representations of $S_n$. The isotypic components $S^{\lambda} \otimes L^{\lambda}$ in the decomposition in \cref{schurweyldecomp} are the irreducible $\mathfrak{sl}_2$-invariant and $S_n$-invariant subspaces of $L(C_2^n)$. The Schur--Weyl duality and the fact that the action of $K_n$ on $L(C_2^n)$ commutes with the action of $S_n$ implies that there exists an element of the universal enveloping algebra of $\mathfrak{sl}_2$ (a polynomial in the $e$s, $f$s, and $h$s) that acts on $L(C_2^n)$ the same way that $K_n$ does. This is discussed further in \cref{pplusexpression} below.

\cref{orthoevthm} provides eigenvectors that lie in the isotypic components $S^{\lambda} \otimes L^{\lambda}$ and implies the following description specifying how the eigenvalue multiplicities manifest across the different irreducible subspaces.

\begin{corollary}\label{eigenvaluesplittingresult}
Let $K_n^{\lambda}$ be the binary Burnside operator $K_n$ restricted to the isotypic component $S^{\lambda} \otimes L^{\lambda}$. By Schur's lemma, $K_n^{\lambda}$ acts as $\tilde{K}_n^\lambda \otimes I_{\dim(S^{\lambda})}$ for some operator $\tilde{K}_n^\lambda$ (which is typically not a Markov chain). Let $\beta_k = \frac{1}{2^{4k}} \binom{2k}{k}^2$ be as in \cref{eigenvectors}. Then for any $k \le \frac{n}{2}$, $\beta_k$ is an eigenvalue of $\tilde{K}_n^\lambda$ of multiplicity $1$ (meaning it is an eigenvalue of $K_n^\lambda$ of multiplicity $\dim(S^{\lambda})$) for $\lambda = (n-m, m)$ when $m \le \min(2k, n-2k)$, and there are no other nonzero eigenvalues.
\end{corollary}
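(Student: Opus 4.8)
The plan is to extract everything from \cref{orthoevthm} and the structural statement \cref{schurweylsplittingmechanism}, with essentially no new computation. Fix $m\in\{0,1,\ldots,\lfloor n/2\rfloor\}$ and put $\lambda=(n-m,m)$. First I would identify the Schur--Weyl isotypic component $S^\lambda\otimes L^\lambda$ with $\bigoplus_{Q\in\hat S_n^{(n-m,m)}}V_Q$. By \cref{schurweylsplittingmechanism}, each $V_Q=\bigoplus_{\ell=m}^{n-m}V_Q^{(\ell)}$ is an irreducible $\fsl_2$-submodule of dimension $n-2m+1$ (so isomorphic to $L^\lambda$), and for each $\ell$ the subspace $\bigoplus_Q V_Q^{(\ell)}$ is an irreducible $S_n$-submodule isomorphic to $S^\lambda$. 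Summing over $Q$ and over $\ell$ shows that $\bigoplus_Q V_Q$ is simultaneously a direct sum of $n-2m+1$ copies of $S^\lambda$ and of $\dim S^\lambda$ copies of $L^\lambda$, glued compatibly; comparing dimensions with \cref{Vndecomp} confirms this is precisely the $\lambda$-isotypic component, with multiplicity space $\cong L^\lambda$ realized (for any fixed $Q$) by the single summand $V_Q$.

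Next I would pin down the operator $\tilde K_n^\lambda$. As recorded in the proof of \cref{orthoevthm}, for every $\ell\in\{0,\ldots,n-2m\}$ the vector $f_Q^{m,\ell}$ is a simultaneous $M_r$-eigenvector with $M_rf_Q^{m,\ell}=\mathrm{ct}(Q(r))f_Q^{m,\ell}$, so $f_Q^{m,\ell}\in V_Q$; there are $n-2m+1=\dim V_Q$ of them and they are linearly independent, hence $\{f_Q^{m,\ell}:0\le\ell\le n-2m\}$ is a basis of $V_Q$. Since each $f_Q^{m,\ell}$ is a $K_n$-eigenvector by \cref{orthoevthm}, $K_n$ preserves $V_Q$ and is diagonal on it. Because $K_n$ commutes with the $S_n$-action, the restriction $K_n^\lambda$ on the isotypic component is $\tilde K_n^\lambda\otimes I_{\dim S^\lambda}$ with $\tilde K_n^\lambda$ identified with $K_n|_{V_Q}$ on the multiplicity space (the choice of $Q$ is immaterial by $S_n$-equivariance). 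Thus the eigenvalues of $\tilde K_n^\lambda$ are exactly $\beta_{(m+\ell)/2}$ when $m+\ell$ is even and $0$ otherwise, indexed by $\ell\in\{0,\ldots,n-2m\}$; since $\beta_0>\beta_1>\cdots$ are pairwise distinct and positive, each nonzero value occurring here occurs with multiplicity exactly $1$ in $\tilde K_n^\lambda$.

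Finally I would read off the combinatorial condition: $\beta_k$ arises as an eigenvalue of $\tilde K_n^\lambda$ exactly when there is $\ell\in\{0,\ldots,n-2m\}$ with $m+\ell$ even and $(m+\ell)/2=k$, which forces $\ell=2k-m$, and $0\le 2k-m\le n-2m$ is equivalent to $m\le 2k$ and $m\le n-2k$, i.e. $m\le\min(2k,n-2k)$ (which is an integer $\le n/2$, so it automatically respects $m\le\lfloor n/2\rfloor$ and the requirement that $(n-m,m)$ be a partition). When this holds the witnessing $\ell$ is unique, so $\beta_k$ has multiplicity $1$ in $\tilde K_n^\lambda$ and hence multiplicity $\dim S^\lambda$ in $K_n^\lambda$; and every nonzero eigenvalue of $\tilde K_n^\lambda$ is of the form $\beta_{(m+\ell)/2}$, so there are no others. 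The only step with real content beyond bookkeeping is the identification of $\bigoplus_Q V_Q$ with the Schur--Weyl isotypic component $S^\lambda\otimes L^\lambda$ in the first paragraph — this is essentially a repackaging of \cref{schurweylsplittingmechanism} together with the dimension count of \cref{Vndecomp} — and I expect that to be the main point to state carefully; once it is in hand, the rest follows directly from the explicit eigenbasis of \cref{orthoevthm}.
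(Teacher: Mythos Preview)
Your proposal is correct and follows essentially the same route as the paper: both derive the result directly from \cref{orthoevthm} by noting that the eigenvectors $f_Q^{m,\ell}$ lie in the $\lambda=(n-m,m)$ isotypic component (via $f_Q^{m,\ell}\in V_Q$ and \cref{schurweylsplittingmechanism}), that their eigenvalue depends only on $m+\ell$, and that $2k=m+\ell$ has a (unique) solution $\ell\in\{0,\ldots,n-2m\}$ iff $m\le\min(2k,n-2k)$. Your write-up is more explicit than the paper's about identifying $\bigoplus_Q V_Q$ with the isotypic component and about realizing $\tilde K_n^\lambda$ as $K_n|_{V_Q}$, but these are exactly the ingredients the paper is tacitly invoking.
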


The decompositions for $n = 4$ and $n = 5$ are shown below for illustration. Note that $\dim(S^{(n-m, m)})$ is exactly the number of standard Young tableaux of shape $(n-m, m)$, so that each value in the array corresponds to a particular eigenvector. The rows with an even value of $m + \ell$ correspond to eigenspaces for the nonzero eigenvalues. For $n = 4$ and $n = 5$, respectively, we have the following tables:
\[
    \begin{array}{c|cccccccccc}
m+\ell 
&\sTBL{{1,2},{3,4}}
&\sTBL{{1,3},{2,4}}
&&\sTBL{{1,2,3},{4}}
&\sTBL{{1,2,4},{3}}
&\sTBL{{1,3,4},{2}}
&&\sTBL{{1,2,3,4}}
\\
\hline
0& &&&&&&&1 \\
1& &&&0 &0 &0 &\quad &0 \\
2& \frac14 &\frac14 &\quad &\frac14 &\frac14 &\frac14 &&\frac14 \\
3& &&&0 &0 &0 & &0 \\
4& &&&&&&&\frac{9}{64}
\end{array}
\]
\[
    \begin{array}{c|cccccccccccccc}
m+\ell &\sTBL{{1,2,3},{4,5}}
&\sTBL{{1,2,4},{3,5}}
&\sTBL{{1,3,4},{2,5}}
&\sTBL{{1,3,5},{2,4}}
&\sTBL{{1,2,5},{3,4}}
&&\sTBL{{1,2,3,4},{5}}
&\sTBL{{1,2,3,5},{4}}
&\sTBL{{1,2,4,5},{3}}
&\sTBL{{1,3,4,5},{2}}
&&\sTBL{{1,2,3,4,5}}
\\
\hline
0 &&&&&&&&&&&&1 \\
1& &&&&&&0 &0 &0 &0 &\quad &0 \\
2&\frac14 &\frac14 &\frac14 &\frac14 &\frac14 &\quad 
 &\frac14 &\frac14 &\frac14 &\frac14 &&\frac14 \\
3 &0 &0 &0 &0 &0 &&0 &0 &0 &0 &&0 \\
4 &&&&& &&\frac{9}{64} &\frac{9}{64}&\frac{9}{64} &\frac{9}{64} &&\frac{9}{64} \\
5& &&&&&&&&&&&0
\end{array}
\]
\begin{proof}
\cref{orthoevthm} exhibits a full basis of eigenvectors $\{f_Q^{m, \ell}\}$ with the eigenvectors indexed by the $\dim(S^{(n-m, m)})$ Young tableaux of shape $(n-m, m)$ in the isotypic component $L^{(n-m, m)} \otimes S^{(n-m, m)}$. The eigenvalue of an eigenvector $f_Q^{m, \ell}$ depends only on $m + \ell$. Since $\ell \in \{0, 1, \cdots, n- 2m\}$, the eigenvectors with $m + \ell \in \{m, m+1, \cdots, n-m\}$ appear in the isotypic component $L^{(n-m, m)} \otimes S^{(n-m, m)}$. This means that $\beta_k$ appears as an eigenvector in the isotypic component $L^{(n-m, m)} \otimes S^{(n-m, m)}$ if and only if $m \le 2k \le n-m$, or equivalently, if and only if $m \le \min(2k, n-2k)$.
\end{proof}

\begin{remark}
For any representation $\rho$ of a finite group $G$ on a vector space $V$ and any irreducible character $\chi$ of $G$ of degree $d$, the projection of $\rho$ onto the copies of $\chi$ that appear is given by
\[
    P = \frac{d}{|G|} \sum_{g \in G} \chi(g^{-1}) \rho(g).
\]
(This formula is the ``canonical decomposition'' described in \cite[Section 2.6]{serrerepresentations}.) In particular, if $\chi$ corresponds to the trivial representation, $P$ is exactly averaging over the entire orbit, so $K_n$ restricted to the $(n+1)$ copies of the trivial representation is exactly $K^{\text{lumped}}$; this is consistent with the fact that all eigenvalues $\beta_0, \beta_1, \cdots, \beta_{\lfloor n/2 \rfloor}$ each appear exactly once in the lumped chain. The fact that Hahn polynomials appear in the eigenvectors corresponding to the other irreducible representations indicates that there may be nice interpretations for the other projections as well, even if they are not Markov operators.
\end{remark}

\section{Mixing time analysis from the one-ones state}\label{schurweylsubsection3}

We will now describe a probabilistic application of the orthogonal basis of eigenfunctions $\{f_Q^{m, \ell}\}$, showing that a bounded number of steps also suffices from the ``all-but-one zeros'' state by evaluating the $f_Q^{m, \ell}$s at this state (and in fact finding that almost all of them evaluate to zero, in stark contrast with our original basis $\{f_S\}$). For this, recall the expression for $\chi_x^2(\ell)$ from \cref{l1vsl2bound}. In our new notation, we have that the chi-square distance to stationarity started from $x$ after $s$ steps is
\begin{equation}\label{messychisquare}
    \chi_x^2(s) = \sum_{\substack{m \in \{0, \cdots, \lfloor n/2 \rfloor\} \\ Q \in \hat{S}_n^{(n-m, m)} \\ \ell \in \{0, \cdots, n-2m\} \\ m + \ell \text{ even} \\(m,\ell) \ne (0, 0)}} \left(\overline{f_Q^{m, \ell}}(x)\right)^2 \left(\beta_{(m+\ell)/2}\right)^{2s},
\end{equation}
where $\overline{f_Q^{m, \ell}}(x)$ is the $\ell^2(\pi)$-normalized multiple of $f_Q^{m, \ell}$ (whose squared norm is given by \cref{fmlqnorm}). Of course, this can be a very complicated sum in general, but we will now demonstrate that the specific form of our basis $\{f_Q^{m, \ell}\}$ can be very convenient for computations. To do this, we will compute the chi-square distance to stationarity started from the ``one-ones'' state $e_n$, where
\[
    e_j  = (0, \cdots, 0, 1, 0, \cdots, 0) \text{ has a }1 \text{ in only the }j\text{th coordinate}.
\]
To apply \cref{messychisquare}, we will need to compute the value of each $f_Q^{m, \ell}$ at this state.

\begin{remark}
By symmetry of the original Markov chain under permutation of coordinates, we know that the chi-square distance to stationarity is identical for any of the ``one-ones'' states. However, the individual values of $f_Q^{m, \ell}(x)$ are different for those different states $x$, and our choice to use $e_n$ will make the computation easier.
\end{remark}

\begin{proof}[Proof of \cref{swdistancefrom1}]
First, let $S = \{n\}$, so that $v_S =  v_0^{\otimes (n-1)} \otimes v_1$. We note that by definition, for any $v \in V^{\otimes n}$ we have that
\begin{align*}
    v(e_n) &= \text{the coefficient of }v_S \text{ in }v \\
    &= (n+1)n \langle v, v_S \rangle_\pi,
\end{align*}
since our stationary distribution assigns mass $\frac{1}{(n+1) \binom{n}{1}}$ to the state $e_n$. Thus, we may alternatively think of evaluating our vectors at $e_n$ as computing the inner products $n(n+1)\langle f_Q^{m, \ell}, v_S \rangle_\pi$, though we will not take this perspective here.

Since $f_Q^{m,\ell}$ is a linear combination of the $g_Q^{m, i}$s, it will be simplest to compute using the $g_Q^{m, i}$s first. As usual, we begin with $T$, the column reading tableau of shape $(n-m, m)$. By inspection of the definition of $g_T^{m, i}$, 
\begin{equation}\label{evalgTatsingleton}
    g_T^{m, i}(e_j) = \begin{cases} 1, & \text{if }(m, i) = (0, 1), \\ -1, & \text{if } (m, i) = (1, 0) \text{ and }j = 1, \\ 1, & \text{if } (m, i) = (1, 0) \text{ and }j = 2, \\ 0, & \text{otherwise.}\end{cases}
\end{equation}
The key property used here is that for $m \ge 2$, the $(v_{01} - v_{10})^{\otimes m}$ term means that all nonzero terms have at least two coordinates with $v_1$s and so the coefficient of $v_S$ (as well as all other singleton sets) must be zero. Since $f_T^{m, \ell}$ is a linear combination of $g_T^{m, i}$s, if $m \ge 2$, then $f_T^{m, \ell}(e_j) = 0$. Recalling \cref{fQdef}, since each $g_Q^{m, i}$ (resp. $f_Q^{m, \ell}$) is a linear combination of permuted $g_T^{m, i}$s (resp. $f_T^{m, \ell}$s), this immediately implies that for all $1 \le j \le n$ (and in particular $j = n$),
\begin{equation}\label{mge2allzero}
    f_Q^{m, \ell}(e_j) = 0 \text{ for all } m \ge 2 \text{ (and all  }\ell \in \{0, \cdots, n-2m\}\text{ and }Q \in \hat{S}_n^{(n-m, m)}).
\end{equation}
Thus we need only compute $f_Q^{m, \ell}(e_n)$ for $m = 0, 1$. The case $m = 0$ is simpler, since the only tableau of shape $(n-0, 0)$ is the column reading tableau $T$. If $0 \le \ell \le n$ and $1 \le j \le n$, then
\begin{align}\label{numerator0}
    f_T^{0, \ell}(e_j) &= \sum_{i=0}^{n} T_{0,n}^{\ell}(i) g_T^{0, i}(e_1) \nonumber \\
    &= T_{0,n}^{\ell}(1) g_T^{0, 1}(e_1) \nonumber \\
    &= \sum_{S \in \mathbb{S}(n)_\ell} (-1)^{|S| \cap \{1\}} \binom{\ell}{|S \cap \{1\}|} \nonumber \\
    &= \binom{n-1}{\ell} - \ell\binom{n-1}{\ell-1}.
\end{align}
Next, we do the case $m = 1$. A calculation for the column reading tableau $T$ of shape $(n-1, 1)$ gives
\begin{align*}
    f_T^{1, \ell}(e_1) &= \sum_{i=0}^{n-2} T_{1, n}^\ell(i) g_T^{1, i}(e_1) \\
    &= T_{1, n}^\ell(0) g_T^{1, 0}(e_1) \\
    &= -\sum_{S \in \mathbb{S}(n-2)_\ell} (-1)^{1+0}\binom{2 + \ell}{1+0} \\
    &= (2+\ell) \binom{n-2}{\ell}.
\end{align*}
By similar calculations using the values of $g_T^{1, i}$ for $i > 0$,
\[
    f_T^{1, \ell}(e_2) = -(2+\ell) \binom{n-2}{\ell}, \quad f_T^{1,\ell}(e_j) = 0 \text{ for }j \ge 3.
\]
A general tableau $Q$ of shape $(n-1, 1)$ has a single entry $a_1 \ge 2$ in the second row, and so the definition in \cref{tauQd} gives
\begin{align*}
    f_Q^{1, \ell} &= \tau_{a_1 - 1} \cdots \tau_2 f_T^{1, \ell}\\
    &= \left(s_{a_1 - 1} - \frac{1}{a_1 - 1}\right) \cdots \left(s_2 - \frac{1}{2}\right) f_T^{1, \ell}.
\end{align*}
We can now see why picking a particular state $e_j$ can simplify calculations; we will demonstrate this by evaluating our vectors at $e_1$ and at $e_n$. First of all, the transpositions $s_2, \cdots, s_{a_1 - 1}$ do not alter the value at $e_1$, so
\begin{align*}
    f_Q^{1, \ell}(e_1) &= \left(1 - \frac{1}{a_1 -1}\right) \cdots \left(1 - \frac{1}{2}\right) f_T^{1, \ell}(e_1) \\
    &= \frac{1}{a_1 - 1} f_T^{1, \ell}(e_1) \\
    &= \frac{1}{a_1 - 1} (2+\ell) \binom{n-2}{\ell}.
\end{align*}
Thus any tableau $Q$ of shape $(n-1, 1)$ contributes to the final sum in \cref{messychisquare}. On the other hand, take any $n \ge 3$. If we instead choose to evaluate $f_Q^{1, \ell}$ at $e_n$, then the only nonzero contribution comes from applying the sequence of transpositions $s_2, s_3, \cdots, s_{n-1}$ to get from $T$ to $Q$ (because $f_T^{1, \ell}(e_j)$ is only nonzero for $j = 1, 2$). In other words,
\begin{equation}\label{numerator1}
    f_Q^{1, \ell}(e_n) = \begin{cases} -(2+\ell) \binom{n-2}{\ell} & \text{if }a_1 = n, \\ 0 & \text{otherwise}. \end{cases}
\end{equation}
Thus evaluating the eigenvectors $f_Q^{m, \ell}$ at $e_n$ results in the smallest number of nonzero terms in \cref{messychisquare}. We find that if $n \ge 3$, then
\begin{align}\label{onlytwocontributions}
    \chi_{e_n}^2(s) &= \sum_{\substack{m \in \{0, \cdots, \lfloor n/2 \rfloor\} \\ Q \in \hat{S}_n^{(n-m, m)} \\ \ell \in \{0, \cdots, n-2m\} \\ m + \ell \text{ even}\\(m,\ell) \ne (0, 0)}} \left(\overline{f_Q^{m, \ell}}(e_n)\right)^2 \left(\beta_{(m+\ell)/2}\right)^{2s} \nonumber \\
    &= \sum_{\ell \in \{1, \cdots, n\} \text{ even}} \frac{f_{Q_{(0)}}^{0,\ell}(e_n)^2}{\langle f_{Q_{(0)}}^{0, \ell}, f_{Q_{(0)}}^{0, \ell} \rangle_\pi}(\beta_{\ell/2})^{2s} + \sum_{\ell \in \{0, \cdots, n\} \text{ odd}} \frac{f_{Q_{(1)}}^{1,\ell}(e_n)^2}{\langle f_{Q_{(1)}}^{1, \ell}, f_{Q_{(1)}}^{1
    , \ell} \rangle_\pi}(\beta_{(1+\ell)/2})^{2s},
\end{align}
where the two sums are the contributions from $m = 0, 1$ respectively, $Q_{(0)}$ is the only tableau of shape $(n)$, and $Q_{(1)}$ is the tableau of shape $(n-1, 1)$ with $n$ in the second row. We have just computed all of the values in the numerators of the fractions, and we can use \cref{hahnpolynomialnorms} to compute the denominators. By \cref{hahnnorm0} we have 
\begin{equation}\label{denominator0}
    \langle f_{Q_{(0)}}^{0, \ell}, f_{Q_{(0)}}^{0, \ell} \rangle_\pi = \langle f_T^{0, \ell}, f_T^{0, \ell} \rangle_{\pi} = \frac{1}{(2\ell+1)\ell!^2} \cdot \frac{1}{n+1} \prod_{i=-\ell+1}^{\ell+1} (n+i).
\end{equation}
Similarly by \cref{hahnnorm1}, and using that $\gamma_{Q_{(1)}} = \left(1 - \frac{1}{(n-1)^2}\right) \cdots \left(1 - \frac{1}{2^2}\right) = \frac{n}{2(n-1)}$ by a telescoping product,
\begin{equation}\label{denominator1}
    \langle f_{Q_{(1)}}^{1, \ell}, f_{Q_{(1)}}^{1, \ell} \rangle_\pi = \frac{n}{2(n-1)} \langle f_T^{1, \ell}, f_T^{1, \ell} \rangle_{\pi} = \frac{(\ell+2)}{(\ell+1)(2\ell+3)\ell!^2} \cdot \frac{1}{(n-1)^2(n+1)} \prod_{i=-\ell-1}^{\ell+1} (n+i).
\end{equation}

\medskip 

We will now compute a lower bound by taking only the term $\ell = 2$ from the first sum and $\ell = 1$ from the second sum in \cref{onlytwocontributions}. (Indeed, $(m, \ell) = (0, 2)$ and $(1, 1)$ are the only terms corresponding to the largest nontrivial eigenvalue $\beta_1$.) The numerators here are the squares of $f_{Q_{(0)}}^{0,2}(e_n) = \frac{(n-1)(n-6)}{2}$ and $f_{Q_{(1)}}^{1,1}(e_n) = -3(n-2)$, and the denominators simplify to
\[
    \langle f_{Q_{(0)}}^{0, 2}, f_{Q_{(0)}}^{0, 2} \rangle_\pi = \frac{1}{20} \cdot \frac{1}{n+1} \prod_{i=-1}^3 (n+i) = \frac{1}{20}(n-1)n(n+2)(n+3)
\]
and
\[
    \langle f_{Q_{(1)}}^{1, 1}, f_{Q_{(1)}}^{1, 1} \rangle_\pi = \frac{3}{10} \cdot \frac{1}{(n-1)^2(n+1)} \prod_{i=-2}^2 (n+i) = \frac{3}{10}\frac{(n-2)n(n+2)}{n-1}.
\]
The most important feature to notice is that the numerator and denominator of the fractions in \cref{onlytwocontributions} are polynomials of the same order. Thus, the lower bound
\[
    \chi_{e_n}^2(s)\ge \left(\frac{\left(\frac{(n-1)(n-6)}{2}\right)^2}{\frac{1}{20}(n-1)n(n+2)(n+3)} + \frac{(-3(n-2))^2}{\frac{3}{10} \frac{(n-2)n(n+2)}{n-1}}\right) \left(\frac{1}{4}\right)^{2s}
\]
is asymptotically $35 \cdot \left(\frac{1}{4}\right)^{2s}$, and in particular it is at least $5 \cdot \left(\frac{1}{4}\right)^{2s}$ for all $n \ge 3$.

\medskip

Finally, for the upper bound, we include all terms and upper bound each one independently of $n$. Plugging in \cref{numerator0} and \cref{denominator0}, we have
\begin{align*}
    \frac{f_{Q_{(0)}}^{0,\ell}(e_n)^2}{\langle f_{Q_{(0)}}^{0, \ell}, f_{Q_{(0)}}^{0, \ell} \rangle_\pi} &= (n+1)(2\ell+1)\ell!^2\frac{\left(\binom{n-1}{\ell} - \ell \binom{n-1}{\ell-1}\right)^2}{\prod_{i=-\ell+1}^{\ell+1} (n+i)} \\
    &\le (2\ell+1)\frac{(n+1)\prod_{i=1}^{\ell} (n-i)^2}{{\prod_{i=-\ell+1}^{\ell+1} (n+i)}} + (2\ell+1)\ell^4\frac{(n+1)\prod_{i=1}^{\ell-1} (n-i)^2}{{\prod_{i=-\ell+1}^{\ell+1} (n+i)}}
\end{align*}
using that $(a-b)^2 \le a^2 + b^2$ for $a, b > 0$. But now we can pair up each linear factor in $n$ in the numerators with a larger factor in the denominator, meaning that this is simply upper bounded by $(2\ell+1) + \frac{(2\ell+1) \ell^4}{n^2}\le (2\ell+1) + (2\ell+1)\ell^2 = 2\ell^3+\ell^2+2\ell+1$. Similarly, plugging in \cref{numerator1} and \cref{denominator1}, 
\begin{align*}
    \frac{f_{Q_{(1)}}^{1,\ell}(e_n)^2}{\langle f_{Q_{(1)}}^{1, \ell}, f_{Q_{(1)}}^{1, \ell} \rangle_\pi} &= \frac{(\ell+1)(2\ell+3)\ell!^2}{(\ell+2)} \cdot \frac{(n-1)^2(n+1)(2+\ell)^2 \binom{n-2}{\ell}^2}{\prod_{i=-\ell-1}^{\ell+1} (n+i)} \\
    &= (\ell+1)(\ell+2)(2\ell+3) \frac{(n-1)^2(n+1) \prod_{i=1}^{\ell} (n-1-i)^2}{\prod_{i=-\ell-1}^{\ell+1} (n+i)} \\
    &\le (\ell+1)(\ell+2)(2\ell+3) \\
    &=2\ell^3 + 9\ell^2 + 13\ell + 6.
\end{align*}
Our upper bound of \cref{onlytwocontributions} therefore reads
\begin{align*}
    \chi_{e_n}^2(s) &\le \sum_{\ell \in \{1, \cdots, n\} \text{ even}}(2\ell^3+\ell^2+2\ell+1)(\beta_{\ell/2})^{2s} + \sum_{\ell \in \{0, \cdots, n\} \text{ odd}} (2\ell^3 + 9\ell^2 + 13\ell + 6)(\beta_{(1+\ell)/2})^{2s} \\
    &\le \sum_{\ell \in 2\NN} (2\ell^3+\ell^2+2\ell+1)(\beta_{\ell/2})^{2s} + \sum_{\ell \in 2\NN - 1}(2\ell^3 + 9\ell^2 + 13\ell + 6)(\beta_{(1+\ell)/2})^{2s} \\
    &= \sum_{k \in \NN} (32k^3 + 16k^2 + 6k + 1)(\beta_k)^{2s}
\end{align*}
(last step by making the substitutions $\ell = 2k, \ell = 2k-1$ in the summations). But thanks to the binomial coefficient bound $\binom{2a}{a} < \frac{4^a}{\sqrt{\pi a}}$ (\cite{stanica}, Theorem 2.5), we have $\beta_k < \frac{1}{\pi k}$ for all $k$, and the largest nontrivial eigenvalue is $\beta_1 = \frac{1}{4}$. Thus we can use the crude bound 
\[
    \beta_k^{2s} \le \left(\frac{1}{4}\right)^{2(s-3)} \beta_k^6 < \frac{4^6}{\pi^6} \left(\frac{1}{4}\right)^{2s} \cdot \frac{1}{k^6}
\]
for all $s \ge 3$ so that the series converges. This yields
\begin{align*}
    \chi_{e_n}^2(s) &\le \frac{4^6}{\pi^6}\sum_{k \in \NN} \frac{32k^3 + 16k^2 + 6k + 1}{k^6} \left(\frac{1}{4}\right)^{2s} \\
    &\le \frac{4^6}{\pi^6} \cdot 63.1\cdot \left(\frac{1}{4}\right)^{2s}\\
    &\le 270 \left(\frac{1}{4}\right)^{2s},
\end{align*}
completing the proof.
\end{proof}

A similar analysis may be carried out from other starting states as well, though there will be even more nonzero terms in the expression for $\chi_x^2(s)$ for general states $x$. We leave these calculations as potential future work, though we believe that a similar strategy as what we have described here may be fruitful and that the asymptotics will be relatively well-behaved due to the nice form of the norms $\langle f_Q^{m,\ell}, f_Q^{m,\ell}\rangle_{\pi}$.

\section{Miscellaneous remarks}\label{miscremarkssection}

In this final section, we collect some additional facts about our matrix $K_n$ in this algebraic framework. One of the key properties of our diagonalization is \cite[Proposition 3.3]{diaconislinram1}, which states that the restriction of the binary Burnside process to any subset of its coordinates is exactly the Burnside process on that coordinate set. We first write out an algebraic variant of that, which (in words) says that ``lumping over the final coordinate still yields the Burnside process on the remaining coordinates.''

\begin{proposition}\label{Kprojalgebraic}
Let $I$ denote the identity $2 \times 2$ matrix. We have
\[
    K_n(I^{\otimes(n-1)}\otimes K_1) =  K_{n-1}\otimes K_1.
\]
\end{proposition}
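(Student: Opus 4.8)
The plan is to evaluate both sides of the claimed operator identity on the standard basis $\{v_S : S\subseteq\{1,\ldots,n\}\}$ of $V^{\otimes n}$ and to recognize the resulting scalar identity as a restatement of \cite[Proposition 3.3]{diaconislinram1} (the lumpability of the binary Burnside process under forgetting a coordinate). Throughout I will write $x_S$ for the state of $C_2^n$ (or of $C_2^{n-1}$, as the context demands) whose set of ones is $S$, so that $v_S$ is the indicator function of $x_S$; with the convention $K_nf(x)=\sum_y K_n(x,y)f(y)$ this means $K_n v_S=\sum_{T}K_n(x_T,x_S)\,v_T$. (As a check, $K_n\big(\sum_S v_S\big)=\sum_T v_T$ because $K_n$ is stochastic, i.e.\ the constant function is $K_n$-invariant.)

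First I would unwind the two easy building blocks. Since $S_1$ is trivial, one step of the one-coordinate Burnside chain simply resamples a uniform bit, so $K_1(b,b')=\tfrac12$ for all $b,b'$; as an operator on $V$ this is the rank-one averaging map with $K_1 v_0=K_1 v_1=\tfrac12(v_0+v_1)$. Writing $v_S=v_{S'}\otimes w$, where $S'=S\cap\{1,\ldots,n-1\}$ and $w\in\{v_0,v_1\}$ is the $n$-th tensor factor, it follows that
\[
  \big(I^{\otimes(n-1)}\otimes K_1\big)\,v_S=v_{S'}\otimes\big(K_1 w\big)=\tfrac12\big(v_{S'}+v_{S'\cup\{n\}}\big),
\]
which depends on $S$ only through $S'$. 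Applying $K_n$, the coefficient of $v_T$ in $K_n\big(I^{\otimes(n-1)}\otimes K_1\big)v_S$ equals $\tfrac12\big(K_n(x_T,x_{S'})+K_n(x_T,x_{S'\cup\{n\}})\big)$. On the other hand,
\[
  \big(K_{n-1}\otimes K_1\big)\,v_S=\Big(\sum_{U\subseteq\{1,\ldots,n-1\}}K_{n-1}(x_U,x_{S'})\,v_U\Big)\otimes\tfrac12\big(v_0+v_1\big),
\]
so the coefficient of $v_T$ there is $\tfrac12\,K_{n-1}\big(x_{T\cap\{1,\ldots,n-1\}},x_{S'}\big)$. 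Hence the proposition is equivalent to the scalar identity
\[
  K_n(x_T,x_{S'})+K_n(x_T,x_{S'\cup\{n\}})=K_{n-1}\big(x_{T\cap\{1,\ldots,n-1\}},\,x_{S'}\big),\qquad T\subseteq\{1,\ldots,n\},\ \ S'\subseteq\{1,\ldots,n-1\}.
\]

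Finally I would identify this scalar identity with \cite[Proposition 3.3]{diaconislinram1}. Its left-hand side is $\sum_{y}K_n(x_T,y)$, the sum taken over the two states $y\in C_2^n$ whose first $n-1$ coordinates agree with those of $x_{S'}$; probabilistically, this is the chance that one step of the $n$-coordinate Burnside chain started from $x_T$ lands at a state whose restriction to $\{1,\ldots,n-1\}$ equals $x_{S'}$. Now \cite[Proposition 3.3]{diaconislinram1} says precisely that watching only the coordinates in $\{1,\ldots,n-1\}$ turns the Burnside chain into the Burnside chain on that coordinate set; in particular this probability is independent of the $n$-th coordinate of $x_T$ and equals $K_{n-1}(x_{T\cap\{1,\ldots,n-1\}},x_{S'})$, which is what we need. (If one prefers not to invoke \cite{diaconislinram1}, the same identity comes straight out of $K_n(x,y)=\tfrac{1}{|G_x|}\sum_{s\in G_x\cap G_y}\tfrac{1}{|\mf{X}_s|}$ with $G=S_n$ acting on $\mf{X}=C_2^n$, or out of the explicit form of $K_n$ in \cite[Lemma 3.1]{diaconislinram1}, by summing over the $n$-th coordinate of $y$.) There is no real obstacle here: the mathematical content already lives in \cite[Proposition 3.3]{diaconislinram1}, and the only thing requiring care is bookkeeping the conventions — which tensor factor each operator acts on, and the identification $v_S\leftrightarrow$ (indicator of $x_S$) that yields $K_n v_S=\sum_T K_n(x_T,x_S)v_T$ — after which the argument is a one-line translation.
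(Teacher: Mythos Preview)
Your reduction is correct and lands on exactly the same scalar identity the paper proves, namely $K_n(x,y0)+K_n(x,y1)=K_{n-1}(x',y)$ where $x'$ is $x$ with its last coordinate dropped. The only difference is in how that scalar identity is justified: you invoke \cite[Proposition~3.3]{diaconislinram1} directly (the lumpability-under-restriction statement), whereas the paper reproves it on the spot by plugging in the closed form for $K_n(x,y)$ from \cite[Lemma~3.1]{diaconislinram1} and checking the binomial identity by hand --- which is precisely the alternative you mention in your parenthetical. So the two arguments are the same in substance; yours is cleaner if one is happy to cite, while the paper's is self-contained.
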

\begin{proof}
We have the closed-form expression from \cite[Proposition 3.1]{diaconislinram1}
\[
    K_n(x, y) = \frac{\binom{2n_{00}}{n_{00}} \binom{2n_{01}}{n_{01}} \binom{2n_{10}}{n_{10}} \binom{2n_{11}}{n_{11}}}{4^n\binom{n_{00} + n_{01}}{n_{00}}\binom{n_{10} + n_{11}}{n_{10}}}
\]
A direct computation shows that
\[
    \frac{\binom{2(n_{00}+1)}{n_{00}+1}}{\binom{n_{00}+n_{01}+1}{n_{00}+1}} = \frac{\frac{(2n_{00}+1)(2n_{00}+2)}{(n_{00}+1)(n_{00}+1)}} {\frac{(n_{00}+n_{01}+1)}{(n_{00}+1)}} \frac{\binom{2n_{00}}{n_{00}}}{\binom{n_{00}+n_{01}}{n_{00}}} = \frac{2(2n_{00}+1)}{(n_{00}+n_{01}+1)} \frac{\binom{2n_{00}}{n_{00}}}{\binom{n_{00}+n_{01}}{n_{00}}},
\]
as well as the same identity with $n_{00}$ and $n_{01}$ switched. Thus for any $x, y \in C_2^{n-1}$ (and defining $n_{00}, n_{01}, n_{10}, n_{11}$ relative to $x, y$), 
\begin{align*}
K_n(x0,y0)+K_n(x0,y1) &= \left(\frac{2(2n_{00}+1)}{n_{00}+n_{01}+1}+\frac{2(2n_{01}+1)}{n_{00}+n_{00}+1}\right) \cdot \frac{\binom{2n_{00}}{n_{00}} \binom{2n_{01}}{n_{01}} \binom{2n_{10}}{n_{10}} \binom{2n_{11}}{n_{11}}}{4^n\binom{n_{00} + n_{01}}{n_{00}}\binom{n_{10} + n_{11}}{n_{10}}} \\
&= \frac{\binom{2n_{00}}{n_{00}} \binom{2n_{01}}{n_{01}} \binom{2n_{10}}{n_{10}} \binom{2n_{11}}{n_{11}}}{4^{n-1}\binom{n_{00} + n_{01}}{n_{00}}\binom{n_{10} + n_{11}}{n_{10}}} \\
&= K_{n-1}(x,y),
\end{align*}
where we have used that $n_{00}$ increments from $(x, y)$ to $(x0, y0)$ and $n_{01}$ increments from $(x, y)$ to $(x0, y1)$, but all other values stay the same. Using the same strategy, we also have that $K_n(x1,y0) + K_n(x1,y1) = K_{n-1}(x,y)$ for all $x, y$. Putting this together and using that $K_1 = \begin{bmatrix} 1/2 & 1/2 \\ 1/2 & 1/2 \end{bmatrix}$, we arrive at the desired equality
\[
    K_n(I^{\otimes(n-1)}\otimes K_1) = K_{n-1}\otimes K_1
\]
(in words, this says that we can either average over the last coordinate before or after applying the binary Burnside matrix).
\end{proof}

Next, we show (analogously to the proof of \cref{eigenvectors} appearing in \cite{diaconislinram1}) an algebraic proof that the ``lifted vectors'' $f_S = \sum_{T\subseteq \{1, \ldots, n\}} (-1)^{\vert S\cap T\vert} \binom{\vert S\vert}{\vert S\cap T\vert} v_T$ are indeed eigenvectors of our Burnside matrix.

\begin{proposition}
Let $S\subseteq \{1, \ldots, n\}$ and let $\ell = |S|$.  Then $f_S$ is an eigenvector of $K_n$ of eigenvalue $\beta_{\ell/2}$ if $\ell$ is even and $0$ otherwise.
\end{proposition}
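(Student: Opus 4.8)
\emph{Plan.} Following the strategy of the proof of \cref{eigenvectors} in \cite{diaconislinram1}, but organized around \cref{Kprojalgebraic}, I would proceed by two reduction steps and one binomial computation. First, since $K_n$ commutes with the $S_n$-action and $f_{wS}=wf_S$, it suffices to treat one subset of each size $\ell=|S|$; take $S=\{1,\dots,\ell\}$. Because $\binom{|S|}{|S\cap T|}$ depends only on $T\cap S$, one has the factorization
\[
 f_{\{1,\dots,\ell\}}=\Bigl(\sum_{U\subseteq\{1,\dots,\ell\}}(-1)^{|U|}\binom{\ell}{|U|}\,v_U\Bigr)\otimes(v_0+v_1)^{\otimes(n-\ell)},
\]
whose first tensor factor is the vector ``$f_{\{1,\dots,\ell\}}$'' built inside $V^{\otimes\ell}$. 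Applying \cref{Kprojalgebraic} to $g\otimes v_0$ and using $K_1v_0=\tfrac12(v_0+v_1)$ gives $K_n\bigl(g\otimes(v_0+v_1)\bigr)=(K_{n-1}g)\otimes(v_0+v_1)$ for every $g\in V^{\otimes(n-1)}$. Peeling off the trailing $(v_0+v_1)$ factors one at a time (equivalently, inducting on $n$ with a representative $S$ avoiding the coordinate $n$) reduces the whole statement to the single vector $f_{[n]}$ inside $V^{\otimes n}$.

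Second, as a function on $C_2^n$ the vector $f_{[n]}$ is $x\mapsto(-1)^{|x|}\binom{n}{|x|}$, which depends only on $|x|$; since $K_n$ commutes with $S_n$ it sends orbit-functions to orbit-functions, so the claim becomes the $(n{+}1)$-dimensional statement that $\phi(i):=(-1)^i\binom{n}{i}$ is an eigenvector of the lumped transition matrix $K^{\mathrm{lumped}}$ on $\{0,1,\dots,n\}$ with eigenvalue $\beta_{n/2}$ for $n$ even and $0$ for $n$ odd. To see this, fix a state $x$ with $|x|=i$ and expand $(K^{\mathrm{lumped}}\phi)(i)=\sum_y K_n(x,y)\phi(|y|)$ using the closed form for $K_n(x,y)$ recalled in the proof of \cref{Kprojalgebraic}. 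Grouping the coordinates of $y$ according to the value of $x$ there, the binomial count of admissible $y$'s cancels the denominator $\binom{|x|}{n_{10}}\binom{n-|x|}{n_{00}}$ of $K_n(x,y)$ exactly, leaving (with $a=n_{11}$, $b=n_{01}$, so $|y|=a+b$)
\[
 (K^{\mathrm{lumped}}\phi)(i)=\frac1{4^n}\sum_{a=0}^{i}\sum_{b=0}^{n-i}\binom{2a}{a}\binom{2i-2a}{i-a}\binom{2b}{b}\binom{2n-2i-2b}{n-i-b}(-1)^{a+b}\binom{n}{a+b}.
\]

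To evaluate this, write $F_k(t)=\sum_c\binom{2c}{c}\binom{2k-2c}{k-c}t^c$, so the double sum is the ``Hadamard pairing'' of $F_i(t)F_{n-i}(t)$ with $(1-t)^n$, namely $\tfrac1{4^n}[t^n]\bigl(F_i(t)F_{n-i}(t)(t-1)^n\bigr)$. Using the Legendre generating function $F_k(t)=4^k t^{k/2}P_k\!\bigl(\tfrac{t+1}{2\sqrt t}\bigr)$ together with $(1-t)^2=4t\bigl((\tfrac{t+1}{2\sqrt t})^2-1\bigr)$: when $n$ is odd every surviving monomial carries an odd power of $\sqrt t$, so $[t^n]=0$ and the eigenvalue is $0$; when $n$ is even the pairing equals $2^n(-1)^{n/2}\tfrac1\pi\int_{-1}^{1}P_i(x)P_{n-i}(x)(1-x^2)^{(n-1)/2}\,dx$, and a Gegenbauer-orthogonality evaluation gives this integral as $(-1)^{n/2+i}\,\tfrac{\pi}{2^{3n}}\binom{n}{n/2}^2\binom{n}{i}$, so that $(K^{\mathrm{lumped}}\phi)(i)=\tfrac1{4^n}\binom{n}{n/2}^2(-1)^i\binom{n}{i}=\beta_{n/2}\,\phi(i)$, as needed.

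Everything above except the last step is bookkeeping; the crux is the evaluation of the double binomial sum. Once one knows (from the parity/Legendre discussion, or directly from the $a\leftrightarrow i-a$, $b\leftrightarrow n-i-b$ symmetry of the sum) that the answer has the form $(-1)^i\binom{n}{i}$ times a quantity depending only on $n$, the eigenvalue itself can be read off by specializing to $i=0$, where the sum collapses to the single-variable identity
\[
 \sum_{j=0}^{n}(-1)^j\binom{n}{j}\binom{2j}{j}\binom{2n-2j}{n-j}=\begin{cases}\binom{n}{n/2}^2,&n\text{ even},\\[2pt] 0,&n\text{ odd},\end{cases}
\]
which is provable by Zeilberger's algorithm, exactly as with the summation behind \cref{tableauorthogonality}. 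A variant of Step 3 that avoids Legendre polynomials altogether would run creative telescoping directly on the $(a,b)$ double sum to show it factors as $(-1)^i\binom{n}{i}$ times a function of $n$.
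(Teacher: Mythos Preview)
Your reduction is exactly the paper's: reduce to $S=\{1,\dots,\ell\}$ by $S_n$-equivariance, factor $f_{\{1,\dots,\ell\}}$ as $(\text{vector in }V^{\otimes\ell})\otimes(v_0+v_1)^{\otimes(n-\ell)}$, and peel off the trailing $(v_0+v_1)$ factors via \cref{Kprojalgebraic} to reduce to $f_{\{1,\dots,\ell\}}$ inside $V^{\otimes\ell}$. The paper then simply invokes that $f_{\{1,\dots,\ell\}}$ is an eigenfunction of $K_\ell$ (equivalently, of the lumped chain) as input from \cite{diaconislinram1}; you instead continue and try to prove this base case by hand.

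Your double-sum setup for the base case is correct, and the involution $a\leftrightarrow i-a$, $b\leftrightarrow n-i-b$ cleanly gives eigenvalue $0$ when $n$ is odd. For $n$ even, though, two of your shortcuts do not hold as stated. First, the $a\leftrightarrow i-a$ symmetry does \emph{not} show that the sum factors as $(-1)^i\binom{n}{i}$ times a function of $n$ alone; that factorization is precisely the eigenvector statement you are trying to prove, so you cannot then ``read off the eigenvalue at $i=0$'' without already knowing it. Second, the claimed ``Gegenbauer-orthogonality evaluation'' of $\int_{-1}^{1}P_i(x)P_{n-i}(x)(1-x^2)^{(n-1)/2}\,dx$ is not a single orthogonality relation: $P_i$ and $P_{n-i}$ are Legendre, not Gegenbauer of parameter $n/2$, so the weight $(1-x^2)^{(n-1)/2}$ is not their orthogonality weight, and one would need a connection-coefficient expansion (or an equivalent computation) to evaluate this integral in closed form. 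Your proposed fallback --- running creative telescoping directly on the $(a,b)$ double sum to exhibit the factor $(-1)^i\binom{n}{i}$ --- would close both issues, and is in the same spirit as how \cite{diaconislinram1} handles such identities; but as written the Legendre route has a genuine gap at this step.
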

\begin{proof}
First, assume that $S = \{1, \cdots, \ell\}$ (meaning $v_S = v_1^{\otimes \ell} \otimes v_0^{\otimes (n-\ell)}$). Write 
\[
    I = \begin{bmatrix} 1 &0 \\ 0 &1 \end{bmatrix} \quad\hbox{and}\quad K_1 = \frac{1}{2}\begin{bmatrix} 1 & 1 \\ 1 &1 \end{bmatrix}.
\]
Applying $K_1$ to each of the last $(n-\ell)$ coordinates yields
\[
    K_1^{\otimes(n-\ell)}v_{0^{(n-\ell)}}  = \frac{1}{2^{n-\ell}} (v_0+v_1)^{\otimes(n-\ell)} = \frac{1}{2^{n-\ell}} \sum_{T \subseteq \{1, \ldots, n-\ell\}} v_T.
\]
Therefore, we can write 
\[
    f_S = f_{\{1, \cdots, \ell\}} = 2^{n-\ell}(I^{\otimes \ell}\otimes K_1^{\otimes (n-\ell)}) (f_{\{1, \cdots, \ell\}}\otimes v_{0}^{\otimes (n-\ell)}).
\]
Hence letting $\beta$ denote the corresponding eigenvalue (either $\beta_{\ell/2}$ if $\ell$ is even or $0$ otherwise), we have
\begin{align*}
K_n f_{\{1, \cdots, \ell\}} 
&= 2^{n-\ell} K_n(I^{\otimes \ell}\otimes K_1^{\otimes (n-\ell)})(f_{\{1, \cdots, \ell\}}\otimes v_{0}^{\otimes (n-\ell)}) \\
&= 2^{n-\ell}(K_\ell \otimes K_1^{\otimes (n-\ell)})(f_{\{1, \cdots, \ell\}}\otimes v_{0}^{\otimes (n-\ell)} )  \\
&= 2^{n-\ell} \beta (I^{\otimes \ell} \otimes K_1^{\otimes (n-\ell)} ) (f_{\{1, \cdots, \ell\}}\otimes v_{0}^{\otimes (n-\ell)})\\
&= \beta f_{\{1, \cdots, \ell\}} ,
\end{align*}
where we used \cref{Kprojalgebraic} in the second line and that $f_{\{1, \cdots, \ell\}}$ is an eigenfunction of $K_\ell$ in the third line. This proves that $f_{\{1, \cdots, \ell\}}$ is indeed an eigenvector of the correct eigenvalue. 

Finally, for the general case, for any $S\subseteq \{1, \ldots, n\}$ of size $\ell$, there exists $w \in S_n$ such that $f_S = w f_{\{1, \cdots, \ell\}}$. Therefore
\[
    K_n f_S = K_n w f_{\{1, \cdots, \ell\}} =  w K_n f_{\{1, \cdots, \ell\}} = \beta w f_{\{1, \cdots, \ell\}} = \beta f_S,
\]
as desired.
\end{proof}

We now describe one more curious property of the matrices $K_n$. As previously discussed, $K_n$ commuting with the action of $S_n$ implies that it may be viewed as an element of the universal enveloping algebra $U(\mathfrak{sl}_2)$. Thus, $K_n$ may be rewritten as some polynomial expression in the basis elements $e, f, h$ of $\mathfrak{sl}_2$ as given in \cref{efhdefn}. In particular, the fact that the nonzero eigenvalues obtained in \cref{eigenvectors} do not depend on $n$ suggests that there may be a single element of $U(\mathfrak{sl}_2)$ which agrees with $K_n$ as an operator on $V^{\otimes n}$, or alternatively that there is some recursive relation among the $K_n$s which makes \cref{Kprojalgebraic} more transparent. The following conjecture (which has been checked up to $n = 10$) is one direction in which this idea could be further explored (though we do not do so here):

\begin{conjecture}\label{pplusexpression}
Define the $2 \times 2$ matrices
\[
    p^+ = \frac{1}{2}(1+e+f) = \begin{bmatrix} 1/2 & 1/2 \\ 1/2 & 1/2 \end{bmatrix}, \quad p^- = \frac{1}{2}(1 - e - f) = \begin{bmatrix} 1/2 & -1/2 \\ -1/2 & 1/2 \end{bmatrix},  \quad p^+h = \frac{1}{2}(1+e+f)h = \begin{bmatrix} 1/2 & -1/2 \\ 1/2 & -1/2 \end{bmatrix}.
\]
Let $f(x, y, z)$ be the sum over all ways (orders) of taking the matrix tensor product of $x$ copies of $p^+$, $y$ copies of $p^-$, and $z$ copies of $p^+h$. Then
\[
    K_n = \sum_{x+y+z=n} c_{y,z} f(x, y, z), \quad \text{where} \quad c_{y, z} = \begin{cases}\left(\frac{y!z!}{\left(\frac{y}{2}\right)!\left(\frac{z}{2}\right)!\left(\frac{y+z}{2}\right)!2^{y+z}}\right)^2, & \text{ if }y, z \text{ are nonnegative even integers}, \\ 0, & \text{otherwise}. \end{cases}
\]
\end{conjecture}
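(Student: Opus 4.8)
\textit{Proof sketch (proposal).} The plan is to prove the claimed operator identity entrywise, showing that the $(\sigma,\tau)$-matrix entry of $\sum_{x+y+z=n}c_{y,z}f(x,y,z)$ equals $K_n(\sigma,\tau)$ for all states $\sigma,\tau\in C_2^n$ (I reserve $\sigma,\tau$ for states to avoid colliding with the $x,y,z$ of \cref{pplusexpression}). Write $n_{ab}=n_{ab}(\sigma,\tau)$ for the number of coordinates $i$ with $\sigma_i=a$, $\tau_i=b$, and recall the closed form for $K_n(\sigma,\tau)$ from \cite[Proposition 3.1]{diaconislinram1} used in the proof of \cref{Kprojalgebraic}. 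Since $(p^+)_{ab}=\tfrac12$, $(p^-)_{ab}=\tfrac12(-1)^{a+b}$, and $(p^+h)_{ab}=\tfrac12(-1)^b$, introducing a variable $u$ marking each $p^-$ factor and $t$ marking each $p^+h$ factor and reading the $(\sigma,\tau)$ entry of the tensor products coordinate by coordinate gives
\[
\sum_{x+y+z=n}u^y t^z\big(f(x,y,z)\big)_{\sigma\tau}=\frac{1}{2^n}(1+u+t)^{n_{00}}(1-u-t)^{n_{01}}(1-u+t)^{n_{10}}(1+u-t)^{n_{11}}=:\frac{1}{2^n}P(u,t).
\]
So, letting $\mathcal{L}$ denote the linear functional $Q\mapsto\sum_{y,z}c_{y,z}[u^yt^z]Q$, the conjecture is equivalent to the scalar identity $\mathcal{L}(P)=2^n K_n(\sigma,\tau)$ for all nonnegative integers $n_{00},n_{01},n_{10},n_{11}$ (with $n$ their sum).

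The engine is an integral representation of $\mathcal{L}$. Here $c_{y,z}$ is nonzero only for even $y=2p$, $z=2q$, and then $c_{2p,2q}=d_{p,q}^2$ with $d_{p,q}=\binom{2p}{p}\binom{2q}{q}/\big(4^{p+q}\binom{p+q}{p}\big)$. Using $\Gamma(m+\tfrac12)=\tfrac{(2m)!}{4^mm!}\sqrt\pi$, this is $d_{p,q}=\tfrac1\pi B\!\big(p+\tfrac12,q+\tfrac12\big)=\tfrac1{2\pi}\int_0^{2\pi}\cos^{2p}\theta\,\sin^{2q}\theta\,d\theta$ by the Wallis/beta integral. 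Squaring, and using that $\int_0^{2\pi}\cos^{a}\theta\,\sin^{b}\theta\,d\theta=0$ whenever $a$ or $b$ is odd, one checks monomial by monomial that for every polynomial $Q$,
\[
\mathcal{L}(Q)=\frac{1}{(2\pi)^2}\int_0^{2\pi}\!\!\int_0^{2\pi}Q\big(\cos\alpha\cos\gamma,\ \sin\alpha\sin\gamma\big)\,d\alpha\,d\gamma.
\]

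Finally I would substitute $Q=P$: the point of $u=\cos\alpha\cos\gamma$, $t=\sin\alpha\sin\gamma$ is that it linearizes all four factors, since $1+u+t=1+\cos(\alpha-\gamma)$, $1-u-t=1-\cos(\alpha-\gamma)$, $1-u+t=1-\cos(\alpha+\gamma)$, $1+u-t=1+\cos(\alpha+\gamma)$. Hence $P$ at this point equals $F(\alpha-\gamma)\,G(\alpha+\gamma)$ with $F(\phi)=(1+\cos\phi)^{n_{00}}(1-\cos\phi)^{n_{01}}$ and $G(\psi)=(1-\cos\psi)^{n_{10}}(1+\cos\psi)^{n_{11}}$, and a one-line Fourier computation on the torus shows $\tfrac1{(2\pi)^2}\int\!\!\int F(\alpha-\gamma)G(\alpha+\gamma)\,d\alpha\,d\gamma=\big(\tfrac1{2\pi}\int F\big)\big(\tfrac1{2\pi}\int G\big)$. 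Each mean is another Wallis integral: writing $1\pm\cos\phi=2\cos^2(\phi/2)$ or $2\sin^2(\phi/2)$ gives $\tfrac1{2\pi}\int_0^{2\pi}(1+\cos\phi)^j(1-\cos\phi)^k\,d\phi=2^{j+k}d_{j,k}$. Multiplying, $\mathcal{L}(P)=2^{n}\,d_{n_{00},n_{01}}\,d_{n_{10},n_{11}}$; and since $4^n=4^{n_{00}+n_{01}}4^{n_{10}+n_{11}}$, the closed form for $K_n$ factors as $K_n(\sigma,\tau)=d_{n_{00},n_{01}}\,d_{n_{10},n_{11}}$, so $\mathcal{L}(P)=2^nK_n(\sigma,\tau)$, as required.

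I expect the only genuinely creative step to be spotting the Wallis/beta form of $d_{p,q}$ — equivalently, realizing $c_{2p,2q}$ as the square of a $\cos$–$\sin$ moment — after which the integral representation of $\mathcal{L}$, the substitution producing $1\pm\cos(\alpha\mp\gamma)$, and the Fourier decoupling are essentially forced and the remainder is bookkeeping. As a consistency check (and a possible seed for an alternative, recursive proof), observe that $(p^+)^2=p^+$ while $p^-p^+=(p^+h)p^+=0$, so right-multiplying $f(x,y,z)$ by $I^{\otimes(n-1)}\otimes p^+$ kills every summand except those whose last tensor slot is $p^+$; this yields $\big(\sum_{x+y+z=n}c_{y,z}f(x,y,z)\big)(I^{\otimes(n-1)}\otimes p^+)=\big(\sum_{x+y+z=n-1}c_{y,z}f(x,y,z)\big)\otimes p^+$, which is precisely the pattern of \cref{Kprojalgebraic} given $K_1=p^+$.
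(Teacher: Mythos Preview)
The paper does \emph{not} prove this statement: it is stated as \cref{pplusexpression}, a conjecture ``which has been checked up to $n=10$,'' and the authors explicitly say they do not pursue it. Your proposal is therefore not a re-derivation of anything in the paper but a proof of an open conjecture, and as far as I can tell every step is correct.

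To confirm the key computations: the generating-function identity for the $(\sigma,\tau)$ entry follows coordinatewise from $(p^+)_{ab}=\tfrac12$, $(p^-)_{ab}=\tfrac12(-1)^{a+b}$, $(p^+h)_{ab}=\tfrac12(-1)^b$, exactly as you wrote. The identification $c_{2p,2q}=d_{p,q}^2$ with $d_{p,q}=\tfrac{1}{\pi}B(p+\tfrac12,q+\tfrac12)=\tfrac{1}{2\pi}\int_0^{2\pi}\cos^{2p}\theta\sin^{2q}\theta\,d\theta$ is a direct $\Gamma$-function computation, and the double-integral representation of $\mathcal{L}$ then follows monomial by monomial since the single integral vanishes whenever either exponent is odd. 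After the substitution $u=\cos\alpha\cos\gamma$, $t=\sin\alpha\sin\gamma$, the four linear factors become $1\pm\cos(\alpha\mp\gamma)$ via product-to-sum, and the Fourier decoupling of $F(\alpha-\gamma)G(\alpha+\gamma)$ into $\hat F_0\hat G_0$ is immediate by orthogonality of characters on the torus. The final half-angle Wallis evaluation $\tfrac{1}{2\pi}\int_0^{2\pi}(1+\cos\phi)^j(1-\cos\phi)^k\,d\phi=2^{j+k}d_{j,k}$ is correct, and the product $d_{n_{00},n_{01}}d_{n_{10},n_{11}}$ matches the closed form of $K_n(\sigma,\tau)$ from \cite[Proposition~3.1]{diaconislinram1} on the nose.

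Your closing observation --- that $p^-p^+=(p^+h)p^+=0$ and $(p^+)^2=p^+$ recover \cref{Kprojalgebraic} directly from the conjectured expression --- is also correct and is a nice structural explanation for why the coefficients $c_{y,z}$ are independent of $x$ (equivalently, of $n$). This is precisely the feature the paper singles out as motivating the conjecture. In short: you have supplied what the paper lacks, via an argument (Wallis/beta integral representation of the coefficients, trigonometric linearization, Fourier decoupling) that is nowhere hinted at in the text.
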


For example, the expressions for $n = 4$ and $n = 6$ read
\[
    K_4 = f(4, 0, 0) + \frac{1}{4}(f(2,2,0) + f(2,0,2)) + \frac{9}{64} (f(0,4,0) + f(0,0,4)) + \frac{1}{64} f(0, 2, 2),
\]  
\begin{align*}
    K_6 = f(6, 0, 0) + \frac{1}{4}(f(4,2,0) + f(4,0,2)) + \frac{9}{64} (f(2,4,0) + f(2,0,4)) + \frac{1}{64} f(2, 2, 2) \\
    + \frac{25}{256} (f(0,6,0) + f(0,0,6)) + \frac{1}{256}(f(0,2,4) + f(0,4,2)).
\end{align*}
Also note that the constants $c_{k,k}$ are exactly the eigenvalues $\beta_k$ of our Markov chain. It may be interesting to write out more explicit expressions for the various terms $f(x, y, z)$, or to find probabilistic interpretations for the off-diagonal constants $c_{y,z}$; note in particular that $f(n, 0, 0)$ is always $\frac{1}{2^n}$ times the all-ones matrix, while all other terms have all row sums equal to zero.

\medskip

Finally, we comment on the potential for extending this algebraic work beyond the binary case; more probabilistic remarks on this generalization can be found in \cite[Section 6.2]{diaconislinram1}. In one step of this more general Burnside process on $(C_k^n, S_n)$, we begin with an $n$-tuple $x \in C_k^n$, uniformly pick a permutation permuting the coordinates within each value, write that permutation as a product of disjoint cycles, and label each cycle uniformly with one of the $k$ values in the alphabet. Our goal would be to also arrive at an orthogonal basis of eigenvectors for the $k^n$ by $k^n$ transition matrix of that Markov chain.

In place of the decomposition in \cref{irreddecomp}, we must now consider permutation representations $M^\lambda$ for partitions $\lambda$ of $n$ of up to $k$ parts, and we now have
\[
    M^{\lambda} = \bigoplus_{\mu} K_{\mu \lambda} S^{\mu}
\]
where $K_{\mu\lambda}$ are the \textit{Kostka numbers} (which are positive if and only if $\mu$ dominates $\lambda$). Towards understanding how these (many) copies of $S^{\mu}$ are arranged in the function space, the Schur--Weyl decomposition of \cref{schurweyldecomp} now reads
\[
    L(C_k^n) = \bigoplus_{\substack{\lambda \text{ partition of }n \\ \text{with at most }k\text{ parts}}} S^\lambda \otimes L^\lambda,
\]
where the $S^\lambda$s are again Specht modules and the $L^\lambda$s are now irreducible representations of $\mathfrak{sl}_k$. One natural question is whether there is an explicit expression for the transition matrix as an element of the universal enveloping algebra $U(\mathfrak{sl}_k)$ in the same way as \cref{pplusexpression}, and whether that expression can be written in a way that demonstrates how $K_n$s relate to each other for various $n$.

We note that we do not even have a full description of the eigenvalues for the general $(C_k^n, S_n)$ Burnside process, though we do have the following conjecture:

\begin{conjecture}[{\cite[Conjecture 6.2]{diaconislinram1}}]\label{ckmultiplicityconjecture}
Fix $k$, and let $\lambda$ be any nonzero eigenvalue of the Burnside chain on $(C_k^n, S_n)$ for any $n$. Then $\lambda$ occurs with multiplicity $a_\lambda \binom{n}{b_\lambda}$ for some integers $a_\lambda, b_\lambda$.
\end{conjecture}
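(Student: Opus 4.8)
The plan is to push the Schur--Weyl analysis of \cref{schurweylsection} and \cref{schurweylsubsection2} up one level of generality and combine it with a coherence identity that forces a representation-stability phenomenon. Write $L(C_k^n)=\bigoplus_\alpha S^\alpha\otimes L^\alpha$ as in \cref{schurweyldecomp}, the sum over partitions $\alpha$ of $n$ with at most $k$ parts. Since the general Burnside operator $K_n$ commutes with $S_n$, Schur's lemma gives $K_n|_{S^\alpha\otimes L^\alpha}=\tilde K_n^\alpha\otimes I_{\dim S^\alpha}$ exactly as in \cref{eigenvaluesplittingresult}, so for any scalar $\lambda\ne 0$,
\[
  \operatorname{mult}_{K_n}(\lambda)=\sum_{\alpha} m_\alpha(\lambda)\,\dim S^\alpha,\qquad m_\alpha(\lambda):=\operatorname{mult}_{\tilde K_n^\alpha}(\lambda).
\]
Thus the statement reduces to controlling two ingredients: (i) the block multiplicities $m_\alpha(\lambda)$ as $\alpha$ grows by lengthening its first row, and (ii) the dependence of $\dim S^\alpha$ on $n$. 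Ingredient (ii) is classical: if $\mu$ is a fixed partition and $\alpha^{(n)}=(n-|\mu|,\mu_1,\mu_2,\ldots)$, then the hook-length formula shows $\dim S^{\alpha^{(n)}}$ is an integer-valued polynomial in $n$ of degree exactly $|\mu|$ with positive leading coefficient (for $k=2$ this is $\binom nm-\binom n{m-1}$, cf.\ \cref{irreddecomp}).

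The real content is ingredient (i), and here I would use the $C_k$ analog of \cref{Kprojalgebraic}. Let $K_1^{(k)}$ be the $k\times k$ rank-one stochastic matrix with all entries $\tfrac1k$; I expect the identity
\[
  K_n\bigl(I^{\otimes(n-1)}\otimes K_1^{(k)}\bigr)=K_{n-1}\otimes K_1^{(k)}
\]
to hold by the same direct computation with the closed-form transition probabilities of the Burnside process (``averaging over the last coordinate before or after one step''). Running the argument used in \cref{miscremarkssection} to prove that the $f_S$ are eigenvectors, this coherence relation lets one induce an eigenvector of $K_b$ of eigenvalue $\lambda$ up to one of $K_n$ of the same eigenvalue, by tensoring with the uniform vector on the remaining $n-b$ coordinates, applying $I^{\otimes b}\otimes(K_1^{(k)})^{\otimes(n-b)}$, and then permuting which $b$ coordinates carry the ``core''. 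Tracking this through the Schur--Weyl decomposition should yield the stability statement: there is a threshold $N(\mu)$ so that $m_{\alpha^{(n)}}(\lambda)=M_\mu(\lambda)\ge 0$ is independent of $n\ge N(\mu)$, and $M_\mu(\lambda)=0$ for all but finitely many $\mu$. (For $k=2$ this recovers $M_{(m)}(\beta_k)=1$ iff $m\le 2k$, matching \cref{eigenvaluesplittingresult}.) Granting stability, $\operatorname{mult}_{K_n}(\lambda)=\sum_\mu M_\mu(\lambda)\,\dim S^{\alpha^{(n)}}$ is, for $n$ large, a single integer-valued polynomial $P_\lambda(n)$ whose degree is $b_\lambda:=\max\{|\mu|:M_\mu(\lambda)\ne 0\}$ (the positive leading coefficients cannot cancel).

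To finish, I would (a) upgrade ``$\operatorname{mult}_{K_n}(\lambda)=P_\lambda(n)$'' from large $n$ to all $n\ge 0$, reconciling the boundary effects from the requirement that $\alpha^{(n)}$ be a genuine partition --- in the binary case this cancellation is exactly the passage from $\sum_{m=0}^{\min(2k,n-2k)}(\binom nm-\binom n{m-1})$ to $\binom n{\min(2k,n-2k)}=\binom n{2k}$ hidden behind the $\min$ in \cref{eigenvaluesplittingresult}; and (b) show $\lambda$ is not an eigenvalue of $K_n$ for $n<b_\lambda$ (the coherence relation makes the nonzero spectrum increasing in $n$, so $b_\lambda$ is the first level at which $\lambda$ appears, which one must check equals $\max\{|\mu|:M_\mu(\lambda)\ne 0\}$). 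Then $P_\lambda$ is an integer-valued polynomial of degree $b_\lambda$ vanishing at $n=0,1,\ldots,b_\lambda-1$: writing $P_\lambda=\sum_{j}c_j\binom nj$ with $c_j\in\ZZ$, the vanishing forces $c_0=\cdots=c_{b_\lambda-1}=0$ and the degree forces $c_j=0$ for $j>b_\lambda$, so $P_\lambda(n)=a_\lambda\binom{n}{b_\lambda}$ with $a_\lambda=c_{b_\lambda}\in\ZZ$, as claimed.

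The main obstacle is ingredient (i): the paper notes that the eigenvalues of the general $(C_k^n,S_n)$ Burnside process are not even known, so the stability of the block multiplicities $m_{\alpha^{(n)}}(\lambda)$ must be proven without an explicit spectral description --- purely from the coherence relation, the restriction maps $C_k^{n-1}\hookrightarrow C_k^n$, and Schur--Weyl bookkeeping. A possibly more hands-on alternative is to imitate \cref{eigenvectors} and \cref{fbyPhi} directly: at the minimal level $b_\lambda$, build a basis of the $\lambda$-eigenspace out of ``primitive'' eigenvectors not induced from level $b_\lambda-1$ (there should be $a_\lambda$ of them), spread each over all $\binom{n}{b_\lambda}$ coordinate subsets using the coherence relation, and prove the resulting $a_\lambda\binom{n}{b_\lambda}$ vectors are linearly independent and span the $\lambda$-eigenspace at level $n$ --- the linear-independence being the delicate step, exactly as the non-orthogonality phenomena of \cref{schurweylsection} would lead one to expect.
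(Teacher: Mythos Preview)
The statement is labeled a \emph{conjecture} in the paper and is not proved there; the authors only remark that it ``may be resolved by proving some appropriate generalization of \cref{eigenvaluesplittingresult}'' and note that even the eigenvalues of the general $(C_k^n,S_n)$ chain are unknown. So there is no paper proof to compare against, and your write-up is best read as a proof strategy rather than a proof.

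As a strategy it is reasonable and close in spirit to what the authors suggest, but it has real gaps beyond the one you flag. The stability claim $m_{\alpha^{(n)}}(\lambda)=M_\mu(\lambda)$ for $n$ large is the crux, and you correctly identify it as the main obstacle; the coherence relation (the $C_k$ analog of \cref{Kprojalgebraic}) gives monotonicity of the nonzero spectrum but does not by itself control how the $\lambda$-eigenspace distributes across the isotypic blocks as $n$ grows. More seriously, step (b) --- that the level at which $\lambda$ first appears equals $b_\lambda=\max\{|\mu|:M_\mu(\lambda)\ne 0\}$ --- is doing a lot of hidden work. If a single numerical eigenvalue $\lambda$ happened to occur in two blocks with $|\mu_1|<|\mu_2|$, then $\lambda$ would already be present at level $|\mu_1|+(\mu_1)_1\le 2|\mu_1|<|\mu_2|$ via the smaller block, and your polynomial $P_\lambda$ would have degree $|\mu_2|$ but vanish only at $0,\ldots,|\mu_1|-1$; that is not enough to force $P_\lambda(n)=a_\lambda\binom{n}{b_\lambda}$. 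In the binary case this coincidence does not occur (each $\beta_k$ sits in blocks $m=0,\ldots,2k$, and the first appearance is exactly at $n=2k$), but ruling it out for general $k$ seems to require precisely the spectral information the paper says is missing. Likewise, step (a) --- reconciling the boundary where $\alpha^{(n)}$ ceases to be a partition --- is handled in the binary case by a telescoping accident ($\sum_{m\le c}(\binom nm-\binom n{m-1})=\binom nc$), and you would need an analog for general hook-length polynomials.

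In short: your outline is a plausible attack and aligns with the paper's own hint, but the two steps you label (a) and (b), together with the unproved block-stability, are not technicalities --- each one encodes a structural statement about the $(C_k^n,S_n)$ spectrum that is currently open.
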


This conjecture may be resolved by proving some appropriate generalization of \cref{eigenvaluesplittingresult}; however, the expressions for $\dim(S^\lambda)$ (and thus the contributions to the total eigenvalue multiplicities) are in general the number of standard Young tableaux of shape $\lambda$, which may be more cumbersome to work with than the simpler expressions $\dim(S^{(n-i, i)}) = \binom{n}{i} - \binom{n}{i-1}$ that appear in the $k = 2$ case.

\appendix

\section{Verifying orthogonality of the remaining eigenvectors}

The formula for the inner product $\langle f_T^{m, \ell_1}, f_T^{m, \ell_2} \rangle$ can be evaluated using the ``creative telescoping'' method with computer assistance. This is the same strategy used to show \cite[Proposition 4.3] {diaconislinram1}, so reading the proof of that result may also be insightful. The key ideas of this algorithm come from Wilf and Zeilberger's WZ method -- an overview can be found in \cite{wzmonthly} -- and subsequent work has been done to speed up the algorithm with various heuristics and a careful ansatz \cite{koutschan}. The Mathematica package \texttt{HolonomicFunctions} that we used, along with further literature references, may be found at \url{https://risc.jku.at/sw/holonomicfunctions/}.

\begin{lemma}\label{tableauorthogonality}
Using the notation in \cref{schurweylsection}, in particular \cref{lperp}, \cref{Tscalarsdefn} and \cref{fgTd}, we have
\[  
    \langle f_T^{m, \ell_1}, f_T^{m, \ell_2} \rangle = 0
\]
for any integer $m \in \{0, 1, \cdots, \lfloor n/2 \rfloor\}$ and $\ell_1 \ne \ell_2 \in \{0, 1, \cdots, n -2m\}$, where $T$ is the column reading tableau of shape $(n-m, m)$. 
\end{lemma}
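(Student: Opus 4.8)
The plan is to reduce the claimed orthogonality to an explicit binomial‑coefficient identity and then dispatch that identity by creative telescoping, following the template of \cite[Proposition 4.3]{diaconislinram1}. First, recall from \cref{fgTd} that $f_T^{m,\ell}=\sum_{i=0}^{n-2m}T_{m,n}^{(\ell)}(i)\,g_T^{m,i}$, and that the vectors $g_T^{m,i}$ are pairwise orthogonal with $\langle g_T^{m,i},g_T^{m,i}\rangle_\pi=\frac{2^m}{n+1}\frac{\binom{n-2m}{i}}{\binom{n}{m+i}}$ (this is \cref{gTnorm}, which follows directly from the definitions and the fact that $\pi$ is constant on $V^{(m+i)}$). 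Consequently
\[
    \langle f_T^{m,\ell_1},f_T^{m,\ell_2}\rangle_\pi=\frac{2^m}{n+1}\sum_{i=0}^{n-2m}T_{m,n}^{(\ell_1)}(i)\,T_{m,n}^{(\ell_2)}(i)\,\frac{\binom{n-2m}{i}}{\binom{n}{m+i}},
\]
so the lemma is equivalent to the vanishing of this sum for $\ell_1\neq\ell_2$. The sum is symmetric in $\ell_1$ and $\ell_2$, so I would assume $\ell_1>\ell_2$ throughout.

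Second, I would simplify the bilinear sum to a genuinely linear one. Grouping the subsets $S$ in \cref{Tscalarsdefn} by $j=\vert S\cap\{1,\ldots,i\}\vert$ gives the closed form $T_{m,n}^{(\ell)}(i)=\sum_{j}(-1)^{m+j}\binom{2m+\ell}{m+j}\binom{i}{j}\binom{n-2m-i}{\ell-j}$ (this is \cref{formulafortscalars}), which exhibits $T_{m,n}^{(\ell_2)}(i)$ as a polynomial in $i$ of degree at most $\ell_2$. Hence $T_{m,n}^{(\ell_2)}$ is a linear combination of the binomials $\binom{i}{k}$ for $k\le\ell_2<\ell_1$, and using $\binom{i}{k}\binom{n-2m}{i}=\binom{n-2m}{k}\binom{n-2m-k}{i-k}$ the claim reduces to the family of single‑variable identities
\[
    \sum_{i=k}^{n-2m}T_{m,n}^{(\ell_1)}(i)\,\frac{\binom{n-2m-k}{i-k}}{\binom{n}{m+i}}=0\qquad(0\le k<\ell_1),
\]
each of which becomes an ordinary double hypergeometric sum (over $i$ and the index $j$) once $T_{m,n}^{(\ell_1)}(i)$ is expanded by \cref{formulafortscalars}. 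This replaces a triple sum depending on the pair $(\ell_1,\ell_2)$ by a double sum depending on the single parameter $k$.

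Third, I would verify these identities with the creative‑telescoping / WZ machinery (the \texttt{HolonomicFunctions} package), exactly as was done for \cite[Proposition 4.3]{diaconislinram1}: treat $i$ and $j$ as the summation variables with $n,m,\ell_1,k$ as parameters, compute a telescoping certificate, read off a recurrence in $\ell_1$ (or $k$), and check that the recurrence together with small base cases forces the sum to vanish in the stated range; the final verification then amounts to a polynomial identity checkable by hand. As a sanity check one can confirm that the $m=0$, $k=0$ instance recovers a known evaluation of a single binomial sum.

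The main obstacle I anticipate is the computer‑algebra step: choosing an ansatz that keeps the telescoping certificate small enough to compute and to verify independently, and correctly accounting for the boundary contributions at $i=k$ and $i=n-2m$. One point that must be respected in the write‑up is that the Hahn orthogonality relation \cref{hahnorthogonality} cannot be invoked as a shortcut here, since \cref{actuallyhahn} (which identifies the $T_{m,n}^{(\ell)}$ with Hahn polynomials) relies on \cref{orthoevthm}, whose proof in turn relies on this lemma; the argument must therefore stay within elementary binomial‑coefficient manipulations.
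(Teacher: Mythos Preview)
Your plan is sound and uses the same overall mechanism as the paper (reduce the inner product to an explicit hypergeometric sum via \cref{gTnorm} and \cref{formulafortscalars}, then finish with creative telescoping), and you are right to flag the circularity that would arise from invoking \cref{hahnorthogonality}. The difference lies in how the WZ step is set up. The paper does \emph{not} perform your bilinear-to-moment reduction: it keeps the full triple sum over $(i,j_1,j_2)$ with all four parameters $(n,m,\ell_1,\ell_2)$, multiplies through by $(\ell_1-\ell_2)$, and then searches for a certificate for the shift $n\mapsto n+1$ rather than in $\ell_1$ or $k$. This yields short explicit rational certificates $Q_I,Q_{J_1},Q_{J_2}$ (with one antisymmetry check at the boundary $i=0$), so the sum is constant in $n$ and can be evaluated at $n=2m$, where only the term $i=j_1=j_2=0$ survives and visibly vanishes. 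Your reduction to the family $\sum_i T_{m,n}^{(\ell_1)}(i)\binom{n-2m-k}{i-k}/\binom{n}{m+i}=0$ is correct and drops one summation variable, which should make the ansatz smaller; on the other hand you now have a one-parameter family in $k$ to handle, and the concrete certificates and base cases still need to be produced, whereas the paper's choice of recursing in $n$ gives a single identity with a trivial base case $n=2m$ and certificates short enough to print.
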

\begin{proof}
First, we unpack the definitions to write the left-hand side as an explicit sum. We have
\begin{align*}
    \langle f_T^{m, \ell_1}, f_T^{m, \ell_2} \rangle &= \sum_{i, j=0}^{n-2m} T_{m,n}^{(\ell_1)}(i) T_{m,n}^{(\ell_2)}(j) \langle g_T^{m, i}, g_T^{m, j} \rangle \\
    &= \frac{2^m}{n+1}\sum_{i=0}^{n-2m} T_{m,n}^{(\ell_1)}(i) T_{m,n}^{(\ell_2)}(i) \frac{\binom{n-2m}{i}}{\binom{n}{m+i}}
\end{align*}
by the orthogonality of the $g_T^{m, i}$s and the computation \cref{gTnorm}. Writing out the $T_{m,n}^{(\ell)}(i)$s using \cref{formulafortscalars}, we wish to prove whether we have the triple sum
\[
    \sum_{i=0}^{n-2m} \sum_{j_1=0}^i \sum_{j_2=0}^i  (-1)^{j_1 + j_2} \binom{2m + \ell_1}{m+j_1} \binom{2m + \ell_2}{m+j_2} \binom{i}{j_1} \binom{i}{j_2}\binom{n-2m-i}{\ell_1-j_1}\binom{n-2m-i}{\ell_2-j_2}\frac{\binom{n-2m}{i}}{\binom{n}{m+i}} \stackrel{?}{=} 0
\]
whenever $\ell_1 \ne \ell_2$. Multiplying by a factor of $(\ell_2 - \ell_1)$ and also noting that $\frac{\binom{n-2m}{i}}{\binom{n}{m+i}} = \frac{1}{\binom{n}{n-2m, m, m}}\binom{m+i}{i}\binom{n-m-i}{m}$, it suffices to prove that for all nonnegative integers $n, m, \ell_1, \ell_2$ with $n \ge 2m$ and $\ell_1, \ell_2 \in \{0, 1, \cdots, n-2m\}$, we have
\[
    \scalebox{.85}{$\displaystyle\sum_{i=0}^{n-2m} \sum_{j_1=0}^i \sum_{j_2=0}^i  (-1)^{j_1 + j_2} \binom{2m + \ell_1}{m+j_1} \binom{2m + \ell_2}{m+j_2} \binom{i}{j_1} \binom{i}{j_2}\binom{n-2m-i}{\ell_1-j_1}\binom{n-2m-i}{\ell_2-j_2}\binom{m+i}{i} \binom{n-m-i}{m} (\ell_1 - \ell_2) \stackrel{?}{=} 0$}.
\]
However, we can produce rational expressions $Q_I, Q_{J_1}, Q_{J_2}$ such that (letting the summand be $P = P(n, m, \ell_1, \ell_2, i, j_1, j_2)$
\[
    P(n+1) - P(n) = Q_I(i+1) - Q_I(i) + Q_{J_1}(j_1+1) - Q_{J_1}(j_1) + Q_{J_2}(j_2 + 1) - Q_{J_2}(j_2).
\]
All of these rational expressions are well-defined within our range of valid $n, m, \ell_1, \ell_2$ (that is, the denominators are nonzero). Thus a telescoping argument shows that $\sum_{i, j_1, j_2} P(n+1) - P(n)$ is therefore zero, since $Q_I, Q_{J_1}, Q_{J_2}$ limit to zero as their arguments $i, j_1, j_2$ tend to infinity, respectively. (One important detail here is that while $Q_I(0)$ is not identically zero, it is antisymmetric in the arguments $j_1, j_2$, and so after summing over those variables the total contribution is indeed zero.) In other words, the total sum of interest is independent of $n$ whenever $\ell_1 \ne \ell_2$. 

So in particular, we plug in $n = 2m$, so that the only valid term in the summation is $i = 0, j_1 = 0, j_2 = 0$. Then the $\binom{n-2m-i}{\ell_1 - j_1}$ and $\binom{n-2m-i}{\ell_2 - j_2}$ factors show that the only nonzero contribution can come if $\ell_1 = \ell_2 = 0$ (and in fact that term is also zero because of the $(\ell_1 - \ell_2)$ factor in the summand). Thus we've proven our desired identity, concluding the proof.
\end{proof}

We conclude by listing out the certificates $Q_I, Q_{J_1}, Q_{J_2}$, where $P$ is the full summand described above.
\[
    \scalebox{.95}{$\displaystyle\frac{Q_I}{P} = \frac{\splitfrac{-j_1 + 2ij_1 - i^2j_1 + j_2 - 2ij_2 + i^2j_2 + 3j_1m-3ij_1m-3j_2m + 3ij_2m}{- 2j_1m^2 + 2j_2m^2- 2j_1n + 2ij_1n + 2j_2n - 2ij_2n + 3j_1mn - 3j_2mn - j_1n^2 + j_2n^2}}{(\ell_1 - \ell_2)(-1 + i - j_1 + \ell_1 + 2m-n)(-1 + i - j_2 + \ell_2 + 2m-n)}$},
\]
\[
    \frac{Q_{J_1}}{P} = \frac{-j_1^2 - j_1m}{(1+i-j_1)(\ell_1-\ell_2)},
\]
\[
    \frac{Q_{J_2}}{P} = \frac{j_2^2 + j_2m}{(1+i-j_2)(\ell_1-\ell_2)}.
\]

\bibliographystyle{bibstyle}
\bibliography{biblio}

\end{document}